\definecolor{mygreen}{RGB}{28,172,0}
\newtheorem{thm}{Theorem}[section]
\newtheorem{lem}[thm]{Lemma}
\newtheorem{cor}[thm]{Corollary}
\newtheorem{prop}[thm]{Proposition}
\theoremstyle{definition}
\newtheorem{defn}[thm]{Definition}
\newtheorem{rem}[thm]{Remark}
\theoremstyle{remark} \numberwithin{equation}{section}
\newcommand{\R}{\mathbb{R}}
\DeclareMathOperator*{\esssup}{ess\,sup}
\definecolor{hmfcol}{rgb}{0,0,0.8}
\definecolor{afcol}{rgb}{1,0,0}
 \journal{}
\begin{document}

\begin{frontmatter}



 \title{Tempered and Hadamard-type fractional calculus with respect to functions}

\author[a,b]{ Hafiz Muhammad Fahad}
\ead{hafizmuhammadfahad13@gmail.com} 

\author[b]{\corref{cor2}  Arran Fernandez}
\ead{arran.fernandez@emu.edu.tr} \cortext[cor2]{Corresponding
	author.}

\author[a]{ Mujeeb ur Rehman}
\ead{mujeeburrehman345@yahoo.com}

\author[a,c]{\corref{cor1}  Maham Siddiqi}
\ead{maham26@gmail.com} \cortext[cor1]{She was a visiting researcher at the School of Natural Sciences, NUST, Islamabad, Pakistan.}

 \address[a]{Department of Mathematics, School of Natural Sciences,
 National University of Sciences and Technology,\\
    Islamabad, Pakistan}
 \address[b]{Department of Mathematics, Faculty of Arts and Sciences, Eastern Mediterranean University,\\ Famagusta, Northern Cyprus, via Mersin 10, Turkey}
 \address[c]{Department of Space Science, Institute of Space Technology, Islamabad, Pakistan}

  \begin{abstract}
  Many different types of fractional calculus have been defined, which may be categorised into broad classes according to their properties and behaviours. Two types that have been much studied in the literature are the Hadamard-type fractional calculus and tempered fractional calculus. This paper establishes a connection between these two definitions, writing one in terms of the other by making use of the theory of fractional calculus with respect to functions. By extending this connection in a natural way, a generalisation is developed which unifies several existing fractional operators: Riemann--Liouville, Caputo, classical Hadamard, Hadamard-type, tempered, and all of these taken with respect to functions. The fundamental calculus of these generalised operators is established, including semigroup and reciprocal properties as well as application to some example functions. Function spaces are constructed in which the new operators are defined and bounded. Finally, some formulae are derived for fractional integration by parts with these operators.
\end{abstract}

\begin{keyword}
fractional integrals; fractional derivatives; tempered fractional calculus; Hadamard-type fractional calculus; operational calculus; fractional operators with respect to functions
\MSC[2010] 26A33\sep 44A45  
\end{keyword}

\end{frontmatter}

\section{Introduction}

Fractional calculus is a field of interest for scientists because of its many applications in diverse areas of study, including physics, engineering, biology, and economics. Its history dates back to the end of the seventeenth century, when Leibniz established the symbol $\frac{\mathrm{d}^{n}}{\mathrm{d}x^{n}} f(x)$ for the $n$th derivative of a function $f$, and de l'H\^opital raised the question of what this could mean when $n$ is a fraction such as $\frac{1}{2}$. Fractional differentiation and fractional integration are generalizations of the ideas of integer-order differentiation and integration, and include $n$-th derivatives and $n$-fold integrals as special cases. We refer to \cite{Samko,Kilbas,Podlubny} for the mathematical theory and \cite{Hilfer,Podlubny2,Diethelm,Herzallah,Tarasov} for the applications in science.

There has been a lot of literature and research especially on the classical Riemann--Liouville and Caputo fractional calculus. The Riemann--Liouville fractional derivative is the most well-established, and in some senses the most natural, definition, but it has some disadvantages when used in modelling physical problems, because the required initial value conditions are themselves fractional. The Caputo fractional derivative is often more suitable for physical conditions because it requires only initial conditions in the classical form \cite{Diethelm}. 

Several different fractional operators have been defined, out of which Riemann--Liouville, Caputo, Hilfer, Riesz, Erdelyi--Kober, Hadamard are just a few to mention \cite{Samko,Kilbas}. Each definition has its own conditions and properties, and many of them are not equivalent to each other. In practice, the physical system under consideration determines the selection of a suitable fractional operator, and the numerous inequivalent definitions are each useful in their own contexts. Therefore, it is logical that we should investigate and develop fractional operators that are generalised versions of the existing, specific cases. Certain very general fractional operators may be considered as classes, which include within them several different fractional operators of practical value. This is a more efficient way of developing the mathematical theory, compared with deriving the same proofs multiple times in many different, but similar, models of fractional calculus \cite{baleanu-fernandez}.

One such class is the class of fractional operators with analytic kernels, proposed in \cite{fernandez-ozarslan-baleanu} as a way of generalising many types of fractional calculus which have been intensely studied in the last few years. One example within this class will be a major topic of this paper, namely \textit{tempered} fractional calculus \cite{meerschaert-sabzikar-chen}, which has also been called \textit{substantial} fractional calculus \cite{cao-li-chen} and \textit{generalised proportional} fractional calculus \cite{jarad-abdeljawad-alzabut}.

Another class of fractional operators, sometimes collectively called $\Psi$-fractional calculus, is given by fractional integration and differentiation of a function with respect to another function: the fractional equivalent of the Riemann--Stieltjes integral. The idea first arose in a 1964 paper \cite{erdelyi} in which Erdelyi discussed fractional integration with respect to a power function, defining an operator which Katugampola rediscovered in 2011 \cite{Katugampola} and which nowadays is sometimes called Katugampola fractional calculus. In its full generality, this class was first proposed and motivated by Osler in 1970 \cite{Osler}, and it was mentioned briefly in the 1974 textbook \cite{oldham-spanier}, although it is often cited instead to the 2006 textbook \cite{Kilbas} where it is analysed in more detail. One example within this class is the \textit{Hadamard} fractional calculus, dating back to 1892 \cite{hadamard}, in which integrals and derivatives are taken with respect to a logarithm function. A variant of this definition, called \textit{Hadamard-type} fractional calculus, was introduced in \cite{Butzer} and further analysed in other works such as \cite{3} and the textbook \cite{Kilbas}, with recent advances including extensions of the operators to Banach spaces \cite{cichon-salem,salem}.

In this paper, we establish an important connection between tempered and Hadamard-type fractional calculus, which, to the best of our knowledge, has not yet been noticed until the present day. Like all new connections in mathematics, this is expected to be useful in the understanding and analysis of both topics. Following the philosophy of defining general classes for mathematical research, we then examine a generalisation of both tempered and Hadamard-type fractional operators, a class of operators which includes both of these standard types, by using the theory of fractional calculus with respect to functions.

The structure of this paper is as follows. Section \ref{Sec:prelim} provides the necessary background, with definitions of the fractional operators mentioned above and statements of a few fundamental facts concerning them. Section \ref{Sec:main} contains the key new ideas: in section 3.1, the connection between tempered and Hadamard-type fractional calculus, and in section 3.2, the generalised form which we spend the rest of the paper studying. Section \ref{Sec:fnspace} concerns function spaces for the generalised operators: for the fractional integral operator in section 4.1, and for the fractional derivative operator in section 4.2. Section \ref{Sec:intparts} establishes integration by parts formulae in the setting of the tempered or Hadamard-type fractional operators with respect to functions. Section \ref{Sec:concl} finally makes concluding statements about our main results and potential future research in this area.

\section{Definitions and background} \label{Sec:prelim}

In this section, we provide the definitions for various types of fractional calculus which we shall refer to throughout the paper. We also recall some essential results whose proofs can be seen in the literature.

\begin{defn}[\cite{Samko,oldham-spanier,Caputo}]
\label{Def:RL&C}
The Riemann--Liouville fractional integral with order $\mu>0$ (or $\mu\in\mathbb{C}$ with $\mathrm{Re}(\mu)>0$) of a given function $f$ is defined by
\[
\prescript{}{a}I^{\mu}_xf(x)=\frac{1}{\Gamma(\mu)}\int_a^x(x-t)^{\mu-1}f(t)\,\mathrm{d}t,
\]
where $x\in(a,b)$ and $a<b$ in $\mathbb{R}$. This is the fractional power of the standard differentiation operator $\frac{\mathrm{d}}{\mathrm{d}x}$.

The Riemann--Liouville fractional derivative with order $\mu>0$ (or $\mu\in\mathbb{C}$ with $\mathrm{Re}(\mu)\geq0$) of a given function $f$ is defined by
\[
\prescript{R}{a}D^{\mu}_xf(x)=\left(\frac{\mathrm{d}}{\mathrm{d}x}\right)^n\prescript{}{a}I^{n-\mu}_xf(x),
\]
where $n-1\leq\mathrm{Re}(\mu)<n\in\mathbb{Z}^+$, $x\in(a,b)$, and $a<b$ in $\mathbb{R}$.

The Caputo fractional derivative with order $\mu>0$ (or $\mu\in\mathbb{C}$ with $\mathrm{Re}(\mu)\geq0$) of a given function $f$ is defined by
\[
\prescript{C}{a}D^{\mu}_xf(x)=\prescript{}{a}I^{n-\mu}_x\left(\frac{\mathrm{d}}{\mathrm{d}x}\right)^nf(x),
\]
where $n-1\leq\mathrm{Re}(\mu)<n\in\mathbb{Z}^+$, $x\in(a,b)$, and $a<b$ in $\mathbb{R}$.
\end{defn}

We note that the Riemann--Liouville and Caputo fractional derivatives both stem from the same definition of fractional integrals, simply combining this with the original differentiation operation in one order or the other. 

\begin{defn}[\cite{meerschaert-sabzikar-chen,li-deng-zhao}]
\label{Def:tempered}
The tempered fractional integral with order $\mu>0$ (or $\mu\in\mathbb{C}$ with $\mathrm{Re}(\mu)>0$) and parameter $s\in\mathbb{C}$ of a given function $f$ is defined by
\[
\prescript{T}{a}I^{\mu,s}_xf(x)=\frac{1}{\Gamma(\mu)}\int_a^x(x-t)^{\mu-1}e^{-s(x-t)}f(t)\,\mathrm{d}t=e^{-sx}\prescript{}{a}I^{\mu}_x\big(e^{sx}f(x)\big),
\]
where $x\in(a,b)$ and $a<b$ in $\mathbb{R}$. This is the fractional power of the modified differentiation operator $\left(\frac{\mathrm{d}}{\mathrm{d}x}+s\right)$.

The tempered fractional derivatives of Riemann--Liouville and Caputo type, with order $\mu>0$ (or $\mu\in\mathbb{C}$ with $\mathrm{Re}(\mu)\geq0$) and parameter $s\in\mathbb{C}$, of a given function $f$, are defined respectively by:
\begin{align*}
\prescript{TR}{a}D^{\mu,s}_xf(x)&=\left(\frac{\mathrm{d}}{\mathrm{d}x}+s\right)^n\prescript{T}{a}I^{n-\mu}_xf(x)=e^{-sx}\prescript{R}{a}D^{\mu}_x\big(e^{sx}f(x)\big), \\
\prescript{TC}{a}D^{\mu,s}_xf(x)&=\prescript{T}{a}I^{n-\mu}_x\left(\frac{\mathrm{d}}{\mathrm{d}x}+s\right)^nf(x)=e^{-sx}\prescript{C}{a}D^{\mu}_x\big(e^{sx}f(x)\big),
\end{align*}
where $n-1\leq\mathrm{Re}(\mu)<n\in\mathbb{Z}^+$, $x\in(a,b)$, and $a<b$ in $\mathbb{R}$.
\end{defn}

The operators of tempered fractional calculus belong to the general class of fractional operators defined by integrals with analytic kernel functions \cite{fernandez-ozarslan-baleanu,baleanu-fernandez}: in this case, the kernel is a fractional power function multiplied by an exponential function. Interestingly, these operators can also be seen as the conjugation of the corresponding original operators from Definition \ref{Def:RL&C} with the simple operation of multiplication by an exponential function. This is equivalent to taking fractional powers of $\left(\frac{\mathrm{d}}{\mathrm{d}x}+s\right)$ because we have precisely
\[
\left(\frac{\mathrm{d}}{\mathrm{d}x}+s\right)f(x)=e^{-sx}\frac{\mathrm{d}}{\mathrm{d}x}\big(e^{sx}f(x)\big).
\]
The same definition has been proposed in various contexts. In \cite{friedrich-jenko}, a physical study of anomalous dynamics of particles gave rise to so-called \textit{substantial} fractional operators, which were compared with tempered fractional operators in \cite{cao-li-chen}. In \cite{jarad-abdeljawad-alzabut}, a mathematical study of fractional iteration gave rise to so-called \textit{generalised proportional} fractional operators, which were compared with tempered fractional operators in \cite{fernandez-ustaoglu}.

\begin{defn}[\cite{Osler,Kilbas,Almeida}]
\label{Def:FwrtF}
The Riemann--Liouville fractional integral with order $\mu>0$ (or $\mu\in\mathbb{C}$ with $\mathrm{Re}(\mu)>0$) of a given function $f$ with respect to a monotonic $C^1$ function $g$ is defined as
\[
\prescript{}{a}I^{\mu}_{g(x)}f(x)=\frac{1}{\Gamma(\mu)}\int_a^x\big(g(x)-g(t)\big)^{\mu-1}f(t)g'(t)\,\mathrm{d}t,
\]
where $x\in(a,b)$ and $a<b$ in $\mathbb{R}$. This is the fractional power of the operator $\frac{\mathrm{d}}{\mathrm{d}g(x)}=\frac{1}{g'(x)}\cdot\frac{\mathrm{d}}{\mathrm{d}x}$ of differentiation with respect to the function $g$.

The Riemann--Liouville and Caputo fractional derivatives with order $\mu>0$ (or $\mu\in\mathbb{C}$ with $\mathrm{Re}(\mu)\geq0$) of a given function $f$, with respect to a monotonic $C^1$ function $g$, are defined respectively as
\begin{align*}
\prescript{R}{a}D^{\mu}_{g(x)}f(x)&=\left(\frac{1}{g'(x)}\cdot\frac{\mathrm{d}}{\mathrm{d}x}\right)^n\prescript{}{a}I^{n-\mu}_{g(x)}f(x), \\
\prescript{C}{a}D^{\mu}_{g(x)}f(x)&=\prescript{}{a}I^{n-\mu}_{g(x)}\left(\frac{1}{g'(x)}\cdot\frac{\mathrm{d}}{\mathrm{d}x}\right)^nf(x),
\end{align*}
where $n-1\leq\mathrm{Re}(\mu)<n\in\mathbb{Z}^+$, $x\in(a,b)$, and $a<b$ in $\mathbb{R}$.
\end{defn}

Just like in Definition \ref{Def:tempered}, the fractional operators defined in Definition \ref{Def:FwrtF} can be written as the conjugation of the original fractional operators of Definition \ref{Def:RL&C} with a simpler operation, this time the operation of composition with $g$ or $g^{-1}$:
\begin{equation}
\label{FwrtF:conjug}
\prescript{}{a}I^{\mu}_{g(x)}=Q_g\circ\prescript{}{g(a)}I^{\mu}_x\circ Q_g^{-1},\quad\quad\prescript{R}{a}D^{\mu}_{g(x)}=Q_g\circ\prescript{R}{g(a)}D^{\mu}_x\circ Q_g^{-1},\quad\quad\prescript{C}{a}D^{\mu}_{g(x)}=Q_g\circ\prescript{C}{g(a)}D^{\mu}_x\circ Q_g^{-1},
\end{equation}
where the functional operator $Q_g$ is defined by
\[
(Q_gf)(x)=f(g(x)).
\]

One case which we wish to focus on is the so-called Hadamard fractional calculus, in which we use Definition \ref{Def:FwrtF} with $g(x)=\log(x)$, the natural logarithm function. Thus the Hadamard fractional integral is
\begin{equation}
\label{Hadamard}
\prescript{H}{a}I^{\mu}_xf(x)=\prescript{}{a}I^{\mu}_{\log x}f(x)=\frac{1}{\Gamma(\mu)}\int_a^x\left(\log\frac{x}{t}\right)^{\mu-1}\frac{f(t)}{t}\,\mathrm{d}t,
\end{equation}
which is the fractional power of the operator $x\frac{\mathrm{d}}{\mathrm{d}x}$, and the Hadamard fractional derivatives of Riemann--Liouville and Caputo type \cite{hadamard,AbdeljawadT} are given as usual by composition of this fractional integral with positive integer powers of $x\frac{\mathrm{d}}{\mathrm{d}x}$.

Hadamard fractional calculus is just a special case of fractional calculus with respect to a function. The more general Hadamard-type fractional calculus, however, is not, and we give it its own Definition as follows.

\begin{defn}[\cite{3,Butzer}]
\label{Def:HT}
The Hadamard-type fractional integral with order $\mu>0$ (or $\mu\in\mathbb{C}$ with $\mathrm{Re}(\mu)>0$) and parameter $s\in\mathbb{C}$ of a given function $f$ is defined as
\[
\prescript{H}{a}I^{\mu,s}_xf(x)=\frac{1}{\Gamma(\mu)}\int_a^x\left(\frac{t}{x}\right)^s\left(\log\frac{x}{t}\right)^{\mu-1}\frac{f(t)}{t}\,\mathrm{d}t,
\]
where $x\in(a,b)$ and $a<b$ in $\mathbb{R}$.

The Hadamard-type fractional derivatives of Riemann--Liouville and Caputo type, with order $\mu>0$ (or $\mu\in\mathbb{C}$ with $\mathrm{Re}(\mu)\geq0$) and parameter $s\in\mathbb{C}$, of a given function $f$, are defined respectively as
\begin{align*}
\prescript{HR}{a}D^{\mu,s}_xf(x)&=x^{-s}\left(x\frac{\mathrm{d}}{\mathrm{d}x}\right)^n\Big(x^s\prescript{H}{a}I^{n-\mu}_xf(x)\Big), \\
\prescript{HC}{a}D^{\mu,s}_xf(x)&=\prescript{T}{a}I^{n-\mu}_x\left(x^{-s}\left(x\frac{\mathrm{d}}{\mathrm{d}x}\right)^n\big(x^sf(x)\big)\right),
\end{align*}
where $n-1\leq\mathrm{Re}(\mu)<n\in\mathbb{Z}^+$, $x\in(a,b)$, and $a<b$ in $\mathbb{R}$.
\end{defn}

For the special case $s = 0$, the Hadamard-type fractional integral and derivative, as given in Definition \ref{Def:HT}, reduce to precisely the classical Hadamard fractional integral and derivative.

Finally, we state some basic results about fractional integrals and derivatives with respect to functions. These are easily deduced from the corresponding results on the original Riemann--Liouville operators by using the relations \eqref{FwrtF:conjug}, as discussed in \cite{Samko,Kilbas}.

\begin{prop}
\label{Prop:FwrtF1}
The expressions $\prescript{}{a}I^{\mu}_{g(x)}f(x)$ and $\prescript{R}{a}D^{\mu}_{g(x)}f(x)$ each define continuous (indeed, analytic) functions of $\mu$ on the respective half-lines (or half-planes in $\mathbb{C}$). If $f$ is continuous and differentiable so that both are defined, then they meet continuously at $\mu=0$, so that we can write $\prescript{}{a}I^{\mu}_{g(x)}f(x)=\prescript{R}{a}D^{-\mu}_{g(x)}f(x)$. In particular:
\begin{itemize}
	\item[(a)] $ \displaystyle\lim_{\mu \to 0^+}\prescript{}{a}I^{\mu}_{g(x)} f(x) = f(x) = \lim_{\mu \to 0^+}\prescript{R}{a}D^{\mu}_{g(x)} f(x)$;
	\item[(b)] $ \displaystyle\lim_{\mu \to (n-1)^+}\prescript{R}{a}D^{\mu}_{g(x)} f(x) = \left(\frac{1}{g'(x)}\cdot\frac{\mathrm{d}}{\mathrm{d}x}\right)^{n-1} f(x), \quad\quad \lim_{\mu \to n^-}\prescript{R}{a}D^{\mu}_{g(x)} f(x) = \left(\frac{1}{g'(x)}\cdot\frac{\mathrm{d}}{\mathrm{d}x}\right)^n f(x)$.
\end{itemize}
\end{prop}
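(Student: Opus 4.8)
The plan is to reduce every assertion to the corresponding known fact about the classical Riemann--Liouville operators and to transfer it across the conjugation relations \eqref{FwrtF:conjug}. Since the functional operator $Q_g$ carries no dependence on $\mu$ and, because $g$ is a monotonic $C^1$ function, is a bijection with well-defined inverse $Q_g^{-1}=Q_{g^{-1}}$, conjugation by $Q_g$ preserves all analytic and limiting behaviour in the variable $\mu$.

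First I would invoke the standard facts about the classical operators, proved in \cite{Samko,Kilbas}: namely that $\prescript{}{g(a)}I^{\mu}_x$ and $\prescript{R}{g(a)}D^{\mu}_x$ depend analytically on $\mu$ on the appropriate half-plane, that they meet continuously at $\mu=0$ with $\lim_{\mu\to0^+}\prescript{}{g(a)}I^{\mu}_xF=F$ for suitable $F$, and that $\lim_{\mu\to(n-1)^+}\prescript{R}{g(a)}D^{\mu}_xF=\left(\frac{\mathrm{d}}{\mathrm{d}x}\right)^{n-1}F$ and $\lim_{\mu\to n^-}\prescript{R}{g(a)}D^{\mu}_xF=\left(\frac{\mathrm{d}}{\mathrm{d}x}\right)^{n}F$. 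Writing $F=Q_g^{-1}f$, the relations \eqref{FwrtF:conjug} give $\prescript{}{a}I^{\mu}_{g(x)}f=Q_g\big(\prescript{}{g(a)}I^{\mu}_xF\big)$ and the analogous identity for the derivative. Because $Q_g$ is pointwise composition with $g$ and is independent of $\mu$, it commutes with analyticity in $\mu$ and with the $\mu$-limits, so the analyticity and the continuous meeting at $\mu=0$ transfer immediately, and applying the $\mu\to0^+$ limit yields $Q_gF=f$, which proves part (a).

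For part (b) the one additional ingredient is the differential conjugation identity $Q_g\circ\left(\frac{\mathrm{d}}{\mathrm{d}x}\right)^k\circ Q_g^{-1}=\left(\frac{1}{g'(x)}\cdot\frac{\mathrm{d}}{\mathrm{d}x}\right)^k$. For $k=1$ this is just the chain rule, since $\frac{1}{g'(x)}\frac{\mathrm{d}}{\mathrm{d}x}\big(F(g(x))\big)=F'(g(x))$, and the general case follows by induction. Conjugating the classical integer-order limits above with $Q_g$ then produces exactly the operators $\left(\frac{1}{g'(x)}\cdot\frac{\mathrm{d}}{\mathrm{d}x}\right)^{n-1}$ and $\left(\frac{1}{g'(x)}\cdot\frac{\mathrm{d}}{\mathrm{d}x}\right)^{n}$ claimed in (b).

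I do not expect a serious obstacle, since the whole argument is a conjugation: the real work has already been done for the classical operators. The only points requiring care are the justification that $Q_g$ commutes with the $\mu$-limits — which holds because the classical limits are valid pointwise (indeed locally uniformly) in the spatial variable, so that composition with $g$ does not disturb them — together with the chain-rule verification of the differential conjugation identity; both are routine under the stated hypotheses on $f$ and $g$.
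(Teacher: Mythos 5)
Your proposal is correct and takes essentially the same route as the paper, which simply states that these results "are easily deduced from the corresponding results on the original Riemann--Liouville operators by using the relations \eqref{FwrtF:conjug}" with reference to \cite{Samko,Kilbas}; your write-up fleshes out exactly that conjugation argument, including the chain-rule identity $Q_g\circ\left(\frac{\mathrm{d}}{\mathrm{d}x}\right)^k\circ Q_g^{-1}=\left(\frac{1}{g'(x)}\cdot\frac{\mathrm{d}}{\mathrm{d}x}\right)^k$ needed for part (b).
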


\begin{prop}
\label{Prop:FwrtF2}
The fractional integrals and derivatives with respect to $g(x)$ have the following semigroup properties:
\begin{align*}
\prescript{}{a}I^{\mu}_{g(x)}f(x)\prescript{}{a}I^{\nu}_{g(x)}f(x)&=\prescript{}{a}I^{\mu+\nu}_{g(x)}f(x),\quad\quad\mu,\nu>0; \\
\prescript{R}{a}D^{\mu}_{g(x)}f(x)\prescript{}{a}I^{\nu}_{g(x)}f(x)&=\prescript{R}{a}D^{\mu-\nu}_{g(x)}f(x),\quad\quad\nu>0; \\
\left(\frac{1}{g'(x)}\cdot\frac{\mathrm{d}}{\mathrm{d}x}\right)^n\prescript{R}{a}D^{\mu}_{g(x)}f(x)&=\prescript{R}{a}D^{n+\mu}_{g(x)}f(x),
\end{align*}
where in the last two identities the operators denoted by $D$ may be either fractional derivatives or fractional integrals according to the sign of the index.
\end{prop}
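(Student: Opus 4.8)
The plan is to deduce all three identities from the corresponding classical semigroup properties of the Riemann--Liouville operators via the conjugation relations \eqref{FwrtF:conjug}. The essential observation is that, because $g$ is monotonic and $C^1$, the composition operator $Q_g$ is a bijection (with inverse $Q_g^{-1}$ given by composition with $g^{-1}$) between functions on $(a,b)$ and functions on $(g(a),g(b))$, preserving the regularity and integrability needed for the classical identities to hold. Consequently, in any composition of operators each written in conjugated form, the adjacent factors $Q_g^{-1}\circ Q_g$ collapse to the identity, so the whole composition telescopes into a single conjugation of a composition of classical operators.

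Concretely, for the first identity I would write
\[
\prescript{}{a}I^{\mu}_{g(x)}\circ\prescript{}{a}I^{\nu}_{g(x)}=\big(Q_g\circ\prescript{}{g(a)}I^{\mu}_x\circ Q_g^{-1}\big)\circ\big(Q_g\circ\prescript{}{g(a)}I^{\nu}_x\circ Q_g^{-1}\big)=Q_g\circ\big(\prescript{}{g(a)}I^{\mu}_x\circ\prescript{}{g(a)}I^{\nu}_x\big)\circ Q_g^{-1},
\]
and then invoke the classical Riemann--Liouville semigroup law $\prescript{}{g(a)}I^{\mu}_x\circ\prescript{}{g(a)}I^{\nu}_x=\prescript{}{g(a)}I^{\mu+\nu}_x$ (valid for $\mu,\nu>0$, see \cite{Samko,Kilbas}) to recognise the right-hand side as $\prescript{}{a}I^{\mu+\nu}_{g(x)}$ by \eqref{FwrtF:conjug} again.

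The remaining two identities follow the same pattern. For the second, I would use both the integral and derivative conjugations in \eqref{FwrtF:conjug} to obtain $\prescript{R}{a}D^{\mu}_{g(x)}\circ\prescript{}{a}I^{\nu}_{g(x)}=Q_g\circ\big(\prescript{R}{g(a)}D^{\mu}_x\circ\prescript{}{g(a)}I^{\nu}_x\big)\circ Q_g^{-1}$ and then apply the classical law $\prescript{R}{g(a)}D^{\mu}_x\circ\prescript{}{g(a)}I^{\nu}_x=\prescript{R}{g(a)}D^{\mu-\nu}_x$. For the third, I would first record that the differential operator appearing there is itself a conjugate, $\frac{1}{g'(x)}\cdot\frac{\mathrm{d}}{\mathrm{d}x}=Q_g\circ\frac{\mathrm{d}}{\mathrm{d}x}\circ Q_g^{-1}$ --- this is exactly the chain rule together with $(g^{-1})'=1/(g'\circ g^{-1})$ --- so that $\big(\frac{1}{g'(x)}\cdot\frac{\mathrm{d}}{\mathrm{d}x}\big)^n=Q_g\circ\big(\frac{\mathrm{d}}{\mathrm{d}x}\big)^n\circ Q_g^{-1}$, and then telescope against $\prescript{R}{a}D^{\mu}_{g(x)}$ and use $\big(\frac{\mathrm{d}}{\mathrm{d}x}\big)^n\circ\prescript{R}{g(a)}D^{\mu}_x=\prescript{R}{g(a)}D^{n+\mu}_x$. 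In the latter two identities, the convention that $D$ stands for a derivative or an integral according to the sign of its index is handled uniformly by Proposition \ref{Prop:FwrtF1}, since $\prescript{R}{a}D^{\lambda}_{g(x)}$ depends analytically on $\lambda$ and agrees with $\prescript{}{a}I^{-\lambda}_{g(x)}$ for $\lambda<0$, and conjugation by $Q_g$ respects this analytic continuation.

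The steps above are essentially algebraic once the classical semigroup properties are granted, so the one point requiring genuine care --- the main obstacle --- is verifying that the hypotheses of those classical properties are met after transport by $Q_g^{-1}$. That is, I must confirm that $Q_g^{-1}f$ lies in the function space on $(g(a),g(b))$ for which the classical identity is known (for the integral law, essentially $L^1$, via Fubini's theorem; for the derivative laws, sufficient smoothness of $\prescript{}{a}I^{\nu}_{g(x)}f$). Because $g$ is a monotonic $C^1$ diffeomorphism, the change of variables $y=g(t)$ shows that $Q_g$ and $Q_g^{-1}$ map the relevant spaces onto each other, so these conditions transfer without loss; I would state the precise regularity assumptions on $f$, deferring the systematic treatment of function spaces to Section \ref{Sec:fnspace}, and otherwise rely on \cite{Samko,Kilbas} for the classical results.
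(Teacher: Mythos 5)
Your proposal is correct and follows exactly the route the paper intends: the paper gives no explicit proof of Proposition \ref{Prop:FwrtF2}, stating only that these identities ``are easily deduced from the corresponding results on the original Riemann--Liouville operators by using the relations \eqref{FwrtF:conjug}'', which is precisely your telescoping conjugation argument. Your additional checks --- that $\frac{1}{g'(x)}\cdot\frac{\mathrm{d}}{\mathrm{d}x}=Q_g\circ\frac{\mathrm{d}}{\mathrm{d}x}\circ Q_g^{-1}$ and that $Q_g$ transports the relevant function spaces --- simply make explicit what the paper leaves to the cited references.
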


\begin{prop}
\label{Prop:FwrtF3}
For $\nu>0$ and any $\mu$, the following relations hold:
\begin{align*}
\prescript{}{a}I^{\mu}_{g(x)}\Big(\big(g(x)-g(a)\big)^{\nu-1}\Big)&=\frac{\Gamma(\nu)}{\Gamma(\nu+\mu)}\big(g(x)-g(a)\big)^{\nu+\mu-1}; \\
\prescript{R}{a}D^{\mu}_{g(x)}\Big(\big(g(x)-g(a)\big)^{\nu-1}\Big)&=\frac{\Gamma(\nu)}{\Gamma(\nu-\mu)}\big(g(x)-g(a)\big)^{\nu-\mu-1}.
\end{align*}
\end{prop}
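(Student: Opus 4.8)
The plan is to reduce both identities to the corresponding power rules for the classical Riemann--Liouville operators by means of the conjugation relations \eqref{FwrtF:conjug}, rather than computing the $g$-dependent integrals directly. First I would treat the fractional integral. Writing $h(x)=\big(g(x)-g(a)\big)^{\nu-1}$ and using $\prescript{}{a}I^{\mu}_{g(x)}=Q_g\circ\prescript{}{g(a)}I^{\mu}_x\circ Q_g^{-1}$, the decisive observation is that applying $Q_g^{-1}$ (composition with $g^{-1}$) turns $h$ into the \emph{ordinary} translated power function $y\mapsto (y-g(a))^{\nu-1}$, since the inner $g$ is cancelled by $g^{-1}$. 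One then applies the classical power rule with lower terminal $c=g(a)$, namely $\prescript{}{c}I^{\mu}_y (y-c)^{\nu-1}=\frac{\Gamma(\nu)}{\Gamma(\nu+\mu)}(y-c)^{\nu+\mu-1}$, and finally composes with $g$ via $Q_g$ to replace $y$ by $g(x)$, producing the stated right-hand side.

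The classical power rule itself I would recall (or establish in one line) via the substitution $t=c+(x-c)u$ in the defining integral, which reduces $\prescript{}{c}I^{\mu}_y (y-c)^{\nu-1}$ to $\frac{(x-c)^{\nu+\mu-1}}{\Gamma(\mu)}$ times the Beta integral $\int_0^1 u^{\nu-1}(1-u)^{\mu-1}\,\mathrm{d}u=B(\nu,\mu)=\frac{\Gamma(\nu)\Gamma(\mu)}{\Gamma(\nu+\mu)}$, whereupon the factor $\Gamma(\mu)$ cancels. The convergence of this integral is exactly what requires $\nu>0$ (more precisely $\mathrm{Re}(\nu)>0$), which is the stated hypothesis.

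For the derivative formula I would proceed identically using the second relation in \eqref{FwrtF:conjug}, $\prescript{R}{a}D^{\mu}_{g(x)}=Q_g\circ\prescript{R}{g(a)}D^{\mu}_x\circ Q_g^{-1}$, so that everything again reduces to the classical derivative power rule $\prescript{R}{c}D^{\mu}_y (y-c)^{\nu-1}=\frac{\Gamma(\nu)}{\Gamma(\nu-\mu)}(y-c)^{\nu-\mu-1}$. The latter follows from the integral rule by applying $(\mathrm{d}/\mathrm{d}y)^n$ to $\prescript{}{c}I^{n-\mu}_y (y-c)^{\nu-1}$ together with $(\mathrm{d}/\mathrm{d}y)^n (y-c)^{\alpha}=\frac{\Gamma(\alpha+1)}{\Gamma(\alpha-n+1)}(y-c)^{\alpha-n}$, the two Gamma factors telescoping. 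Alternatively, since Proposition \ref{Prop:FwrtF1} guarantees that the integral expression is analytic in $\mu$ and that $\prescript{}{a}I^{\mu}_{g(x)}=\prescript{R}{a}D^{-\mu}_{g(x)}$, the derivative identity is simply the analytic continuation of the integral identity under $\mu\mapsto-\mu$.

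The only genuinely delicate point is the bookkeeping in the conjugation: one must verify that $Q_g^{-1}$ sends $\big(g(x)-g(a)\big)^{\nu-1}$ to a clean translated power with base point $g(a)$ rather than $a$, so that the classical rule is invoked with the correct lower terminal and the $g(x)-g(a)$ structure is recovered after applying $Q_g$. Everything else is routine manipulation of the Beta and Gamma functions.
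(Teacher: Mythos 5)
Your proposal is correct and follows exactly the route the paper intends: the paper states these identities without a written proof, noting only that they are "easily deduced from the corresponding results on the original Riemann--Liouville operators by using the relations \eqref{FwrtF:conjug}," which is precisely your conjugation argument (with the Beta-integral derivation of the classical power rule filled in). The bookkeeping you flag — that $Q_g^{-1}$ sends $\big(g(x)-g(a)\big)^{\nu-1}$ to $(y-g(a))^{\nu-1}$ with lower terminal $g(a)$ — is handled correctly.
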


\section{Unifying tempered and Hadamard-type fractional calculus} \label{Sec:main}

In this section, we prove a new relationship between tempered fractional calculus and Hadamard-type fractional calculus. Then, using the concept of fractional integration and differentiation with respect to functions, we show how both of these, apparently different, types of fractional calculus may be unified into a single general system.

\subsection{The relationship between tempered and Hadamard-type fractional calculus}

\begin{thm}
\label{Thm:HTtempered}
Hadamard-type fractional calculus with respect to $x$ is precisely tempered fractional calculus with respect to $\log(x)$.
\end{thm}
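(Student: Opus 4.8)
The plan is to interpret ``tempered fractional calculus with respect to $\log(x)$'' through the conjugation philosophy already used in Definitions \ref{Def:tempered} and \ref{Def:FwrtF}, and then to show by direct substitution that the resulting operators coincide termwise with those of Definition \ref{Def:HT}. Since the tempered operators are the fractional powers of $\left(\frac{\mathrm{d}}{\mathrm{d}x}+s\right)$, taking them ``with respect to $g$'' should mean replacing $\frac{\mathrm{d}}{\mathrm{d}x}$ by $\frac{\mathrm{d}}{\mathrm{d}g(x)}=\frac{1}{g'(x)}\frac{\mathrm{d}}{\mathrm{d}x}$, i.e.\ forming fractional powers of $\left(\frac{\mathrm{d}}{\mathrm{d}g(x)}+s\right)$. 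Explicitly, I would take the tempered-with-respect-to-$g$ integral to be
\[
\prescript{T}{a}I^{\mu,s}_{g(x)}f(x)=\frac{1}{\Gamma(\mu)}\int_a^x\big(g(x)-g(t)\big)^{\mu-1}e^{-s(g(x)-g(t))}f(t)g'(t)\,\mathrm{d}t,
\]
which is simultaneously the tempered integral of Definition \ref{Def:tempered} read with respect to $g$ and the conjugation $M_{e^{-sg(x)}}\circ\prescript{}{a}I^{\mu}_{g(x)}\circ M_{e^{sg(x)}}$, where $M_h$ denotes multiplication by $h$.

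The integral case is then a one-line verification. I would set $g(x)=\log(x)$ and simplify the three $g$-dependent factors: $g(x)-g(t)=\log(x/t)$, the weight $g'(t)\,\mathrm{d}t=\frac{\mathrm{d}t}{t}$, and the exponential $e^{-s(g(x)-g(t))}=e^{-s\log(x/t)}=(t/x)^s$. Substituting these into the display above reproduces exactly the Hadamard-type integral $\prescript{H}{a}I^{\mu,s}_xf(x)$ of Definition \ref{Def:HT}, establishing equality of the integral operators.

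For the derivatives I would exploit the conjugation structure rather than recompute integrals. The identity $\left(\frac{\mathrm{d}}{\mathrm{d}x}+s\right)f=e^{-sx}\frac{\mathrm{d}}{\mathrm{d}x}(e^{sx}f)$ recorded after Definition \ref{Def:tempered} has the exact analogue $\left(\frac{\mathrm{d}}{\mathrm{d}g(x)}+s\right)f=e^{-sg(x)}\frac{\mathrm{d}}{\mathrm{d}g(x)}(e^{sg(x)}f)$, which I would verify by one application of the product rule. Iterating it $n$ times shows that the tempered-with-respect-to-$g$ derivatives are the conjugations $M_{e^{-sg(x)}}\circ\prescript{R}{a}D^{\mu}_{g(x)}\circ M_{e^{sg(x)}}$ and $M_{e^{-sg(x)}}\circ\prescript{C}{a}D^{\mu}_{g(x)}\circ M_{e^{sg(x)}}$ of the Riemann--Liouville and Caputo derivatives with respect to $g$. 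Specialising $g=\log$, so that $e^{sg(x)}=x^s$ and $\frac{\mathrm{d}}{\mathrm{d}g(x)}=x\frac{\mathrm{d}}{\mathrm{d}x}$, these conjugations become precisely the $x^{\pm s}$-bracketed expressions defining $\prescript{HR}{a}D^{\mu,s}_x$ and $\prescript{HC}{a}D^{\mu,s}_x$ in Definition \ref{Def:HT}, using the identity $\prescript{H}{a}I^{\mu,s}_xf=x^{-s}\prescript{H}{a}I^{\mu}_x(x^sf)$ to match the placement of the exponential factors.

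I expect the main obstacle to be bookkeeping in the derivative case rather than any deep difficulty: one must carefully track where the factors $x^{s}$ and $x^{-s}$ sit relative to the operators $\left(x\frac{\mathrm{d}}{\mathrm{d}x}\right)^n$ and the fractional integral, and confirm that moving $x^s$ through them via the conjugation identity matches the particular bracketing chosen in Definition \ref{Def:HT} (including confirming whether the inner integral there is meant to carry the parameter $s$). The small lemma $\prescript{H}{a}I^{\mu,s}_x=M_{x^{-s}}\circ\prescript{H}{a}I^{\mu}_x\circ M_{x^s}$ is the pivot that reconciles the two placements, so I would prove it first and then feed it into the derivative computation.
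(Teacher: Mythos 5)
Your proposal is correct and follows essentially the same route as the paper: the integral case is handled by the identical direct substitution of $g(x)=\log(x)$ into the tempered-with-respect-to-$g$ integral formula, and the derivative case reduces, just as in the paper, to the first-order identity $x^{-s}\left(x\frac{\mathrm{d}}{\mathrm{d}x}\right)\big(x^sf(x)\big)=\left(\frac{\mathrm{d}}{\mathrm{d}(\log x)}+s\right)f(x)$, which is exactly your conjugation identity specialised to $g=\log$. Your extra care about the bracketing of $x^{\pm s}$ is sensible but does not change the argument.
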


\begin{proof}
We recall from \cite[\S6]{fernandez-ozarslan-baleanu} that it is possible to define fractional integrals with respect to functions, not only in the Riemann--Liouville framework, but in any type of fractional calculus defined by an integral with some analytic kernel function. In the case of tempered fractional calculus, this works as follows:
\[
\prescript{T}{a}I^{\mu,s}_{g(x)}f(x)=\frac{1}{\Gamma(\mu)}\int_a^x(g(x)-g(t))^{\mu-1}e^{-s(g(x)-g(t))}f(t)g'(t)\,\mathrm{d}t.
\]
Taking $g$ to be the logarithm function, we find the following expression for the tempered fractional integral of $f(x)$ with respect to $\log(x)$:
\[
\prescript{T}{a}I^{\mu,s}_{g(x)}f(x)=\frac{1}{\Gamma(\mu)}\int_a^x\left(\log\frac{x}{t}\right)^{\mu-1}e^{-s\log\frac{x}{t}}f(t)\frac{1}{t}\,\mathrm{d}t=\frac{1}{\Gamma(\mu)}\int_a^x\left(\log\frac{x}{t}\right)^{\mu-1}\left(\frac{t}{x}\right)^s\frac{f(t)}{t}\,\mathrm{d}t,
\]
which is exactly the formula for the Hadamard-type fractional integral.

The Hadamard-type fractional derivative is obtained from composition of the fractional integral with the $n$th-order repetition of the operator $\prescript{H}{a}D^{1,s}_x$, which is defined by
\begin{align*}
\prescript{H}{a}D^{1,s}_xf(x)&=x^{-s}\left(x\frac{\mathrm{d}}{\mathrm{d}x}\right)\big(x^sf(x)\big)=x^{1-s}\Big[x^sf'(x)+sx^{s-1}f(x)\Big]=xf'(x)+sf(x) \\
&=\left(x\frac{\mathrm{d}}{\mathrm{d}x}+s\right)f(x)=\left(\frac{\mathrm{d}}{\mathrm{d}(\log x)}+s\right)f(x)=\prescript{T}{a}D^{1,s}_{\log x}f(x),
\end{align*}
namely, the tempered derivative with respect to $\log x$. Since the tempered fractional derivative is also obtained from composition of the fractional integral with the $n$th-order repetition of $\prescript{T}{a}D^{1,s}_x$, the proof is complete.
\end{proof}

\begin{thm}
\label{Thm:HTconjug}
The Hadamard-type fractional operators may be written as conjugations of the Riemann--Liouville fractional operators in two different ways:
\begin{align}
\label{HT:conjugH} \prescript{H}{a}I^{\mu,s}_x&=M_{x^s}^{-1}\circ Q_{\log x}\circ\big(\prescript{}{\log a}I_x^{\mu}\big)\circ Q_{\log x}^{-1}\circ M_{x^s} \\
\label{HT:conjugT} &=Q_{\log x}\circ M_{e^{sx}}^{-1}\circ\big(\prescript{}{\log a}I_x^{\mu}\big)\circ M_{e^{sx}}\circ Q_{\log x}^{-1},
\end{align}
and the same relations with the fractional integral $I$ replaced by a fractional derivative $D$ of either Riemann--Liouville or Caputo type, where the $Q$ and $M$ operators are defined by:
\begin{align*}
\big(Q_{g(x)}f\big)(x)=f(g(x)),\quad\quad\big(M_{g(x)}f\big)(x)=f(x)g(x).
\end{align*}
\end{thm}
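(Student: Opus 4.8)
The plan is to assemble both conjugation formulas from three facts already in hand: the identity $\prescript{H}{a}I^{\mu,s}_x=\prescript{T}{a}I^{\mu,s}_{\log x}$ of Theorem~\ref{Thm:HTtempered}; the representation of tempered calculus as the conjugation of Riemann--Liouville calculus by multiplication by $e^{sx}$, i.e.\ $\prescript{T}{a}I^{\mu,s}_x=M_{e^{sx}}^{-1}\circ\prescript{}{a}I^{\mu}_x\circ M_{e^{sx}}$ from Definition~\ref{Def:tempered}; and the conjugation of ``with respect to a function'' operators by composition, $\prescript{}{a}I^{\mu}_{g(x)}=Q_g\circ\prescript{}{g(a)}I^{\mu}_x\circ Q_g^{-1}$ from~\eqref{FwrtF:conjug}. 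I work entirely at the operator level, checking each factorisation once against the defining integral and then reading off the operator identity; note that stripping away a $Q_{\log x}$ sends the base point $a$ to $\log a$, exactly as in~\eqref{FwrtF:conjug}.

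For~\eqref{HT:conjugH} I would start from the integral of Definition~\ref{Def:HT} and pull the factor $(t/x)^s=x^{-s}t^s$ out of the integrand as $x^{-s}$ (constant in $t$) times $t^s$ (absorbed into $f$), yielding $\prescript{H}{a}I^{\mu,s}_xf(x)=x^{-s}\,\prescript{H}{a}I^{\mu}_x\big(x^sf(x)\big)$, that is $\prescript{H}{a}I^{\mu,s}_x=M_{x^s}^{-1}\circ\prescript{H}{a}I^{\mu}_x\circ M_{x^s}$ with $\prescript{H}{a}I^{\mu}_x$ the classical Hadamard integral. Since $\prescript{H}{a}I^{\mu}_x=\prescript{}{a}I^{\mu}_{\log x}$ by~\eqref{Hadamard}, substituting its conjugation form $\prescript{}{a}I^{\mu}_{\log x}=Q_{\log x}\circ\prescript{}{\log a}I^{\mu}_x\circ Q_{\log x}^{-1}$ gives~\eqref{HT:conjugH} at once.

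For~\eqref{HT:conjugT} I would instead use Theorem~\ref{Thm:HTtempered} to replace $\prescript{H}{a}I^{\mu,s}_x$ by $\prescript{T}{a}I^{\mu,s}_{\log x}$, conjugate the tempered operator off the logarithm as in~\eqref{FwrtF:conjug} to get $Q_{\log x}\circ\prescript{T}{\log a}I^{\mu,s}_x\circ Q_{\log x}^{-1}$, and expand the inner tempered operator via Definition~\ref{Def:tempered}. More economically, the two right-hand sides can be seen to agree abstractly: the one-line intertwining relation $Q_{\log x}^{-1}\circ M_{x^s}=M_{e^{sx}}\circ Q_{\log x}^{-1}$ (since $\big(M_{x^s}f\big)(e^x)=e^{sx}f(e^x)$), together with its inverse $M_{x^s}^{-1}\circ Q_{\log x}=Q_{\log x}\circ M_{e^{sx}}^{-1}$, converts the right-hand side of~\eqref{HT:conjugH} into that of~\eqref{HT:conjugT} for \emph{any} central operator. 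Hence proving either form immediately yields the other, for the integral and---as I discuss next---for the derivatives as well.

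Passing to the derivatives is where the bookkeeping must be done with care, and I expect this to be the main obstacle. The proof of Theorem~\ref{Thm:HTtempered} supplies the first-order identity $\prescript{H}{a}D^{1,s}_x=M_{x^s}^{-1}\circ\big(x\tfrac{\mathrm{d}}{\mathrm{d}x}\big)\circ M_{x^s}$; since conjugation respects composition, the inner $M_{x^s}\circ M_{x^s}^{-1}$ pairs telescope and $\big(\prescript{H}{a}D^{1,s}_x\big)^n=M_{x^s}^{-1}\circ\big(x\tfrac{\mathrm{d}}{\mathrm{d}x}\big)^n\circ M_{x^s}$, while $\big(x\tfrac{\mathrm{d}}{\mathrm{d}x}\big)^n=Q_{\log x}\circ\big(\tfrac{\mathrm{d}}{\mathrm{d}x}\big)^n\circ Q_{\log x}^{-1}$ is the integer-order case of~\eqref{FwrtF:conjug}. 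Composing this $n$-th power with the Hadamard-type fractional integral in the Riemann--Liouville order (differential factor outermost) or the Caputo order (differential factor innermost), following Definition~\ref{Def:HT}, the adjacent $M_{x^s}\circ M_{x^s}^{-1}$ and $Q_{\log x}^{-1}\circ Q_{\log x}$ pairs cancel and the whole operator collapses to a single conjugation of $\prescript{R}{\log a}D^{\mu}_x$ or $\prescript{C}{\log a}D^{\mu}_x$, giving~\eqref{HT:conjugH} for the derivatives; the second form then follows from the intertwining relations above. The only points genuinely requiring vigilance are these cancellations and the consistent tracking of the base point $a\mapsto\log a$.
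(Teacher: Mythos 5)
Your proposal is correct and follows essentially the same route as the paper: for \eqref{HT:conjugH} you factor out $(t/x)^s$ to obtain $\prescript{H}{a}I^{\mu,s}_x=M_{x^s}^{-1}\circ\prescript{H}{a}I^{\mu}_x\circ M_{x^s}$ and then conjugate the classical Hadamard integral by $Q_{\log x}$, and for \eqref{HT:conjugT} you combine Theorem \ref{Thm:HTtempered} with the conjugation form of the tempered integral, while your intertwining relation $M_{x^s}^{-1}\circ Q_{\log x}=Q_{\log x}\circ M_{e^{sx}}^{-1}$ is precisely what the paper verifies in the remark immediately after the theorem. Your explicit telescoping treatment of the $n$th-order derivative factors is a sound elaboration of a step the paper leaves implicit in its closing sentence.
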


\begin{proof}
It is clear from comparing Definition \ref{Def:HT} with the Hadamard fractional integral \eqref{Hadamard} that
\[
\prescript{H}{a}I^{\mu,s}_xf(x)=x^{-s}\prescript{H}{a}I^{\mu}_x\big(x^sf(x)\big),
\]
or in other words that Hadamard-type fractional calculus is the conjugation of Hadamard fractional calculus with multiplication by $x^s$:
\[
\prescript{H}{a}I^{\mu,s}_x=M_{x^s}^{-1}\circ\big(\prescript{H}{a}I^{\mu}_x\big)\circ M_{x^s}.
\]
Now Hadamard fractional calculus is Riemann--Liouville fractional calculus with respect to $\log x$, so the conjugation expression \eqref{FwrtF:conjug} gives us
\[
\prescript{H}{a}I^{\mu}_x=Q_{\log x}\circ\big(\prescript{}{\log a}I^{\mu}_x\big)\circ Q_{\log x}^{-1}.
\]
Combining the two operational identities above gives the result \eqref{HT:conjugH}.

To prove \eqref{HT:conjugT}, we use Theorem \ref{Thm:HTtempered}. Recalling from \eqref{FwrtF:conjug} that taking fractional calculus with respect to functions is equivalent to conjugation with $Q$ operators, we rewrite the result of Theorem \ref{Thm:HTtempered} as
\[
\prescript{H}{a}I^{\mu,s}_x=Q_{\log x}\circ\big(\prescript{T}{\log a}I^{\mu,s}_x\big)\circ Q_{\log x}^{-1}.
\]
And we saw in Definition \ref{Def:tempered} that tempered fractional calculus can also be written as a conjugation:
\[
\prescript{T}{\log a}I^{\mu,s}_x=M_{e^{sx}}^{-1}\circ\big(\prescript{}{a}I_x^{\mu}\big)\circ M_{e^{sx}}.
\]
Combining the two operational identities above gives the result \eqref{HT:conjugT}.
\end{proof}

\begin{rem}
The previous Theorem tells us that Hadamard-type fractional calculus is the conjugation of Riemann--Liouville fractional calculus with the operator
\[
M_{x^s}^{-1}\circ Q_{\log x}=Q_{\log x}\circ M_{e^{sx}}^{-1}.
\]
We here verify that these two operators are the same, by starting with a function $f(x)$ and examining the action of both operators:
\begin{align*}
f&\mapsto f(x). \\
M_{e^{sx}}^{-1}f&\mapsto e^{-sx}f(x); \\
Q_{\log x}\circ M_{e^{sx}}^{-1}f&\mapsto e^{-s\log x}f(\log x)=x^{-s}f(\log x). \\
Q_{\log x}f&\mapsto f(\log x); \\
M_{x^s}^{-1}\circ Q_{\log x}f&\mapsto x^{-s}f(\log x).
\end{align*}
\end{rem}

The result of Theorem \ref{Thm:HTconjug} will be very useful in understanding Hadamard-type fractional calculus, because many standard results on Hadamard-type fractional calculus now follow directly from the corresponding results from Riemann--Liouville. For example, the following results from \cite{3,Ma,Butzer} (analogous to Propositions \ref{Prop:FwrtF1} to \ref{Prop:FwrtF3} above) are now very easy to prove by using Theorem \ref{Thm:HTconjug}.

\begin{lem}\label{lem:1}
The expressions $\prescript{H}{a}I^{\mu,s}_xf(x)$ and $\prescript{HR}{a}D^{\mu,s}_xf(x)$ each define continuous (indeed, analytic) functions of $\mu$ on the respective half-lines (or half-planes in $\mathbb{C}$). If $f$ is continuous and differentiable so that both are defined, then they meet continuously at $\mu=0$, so that we can write $\prescript{H}{a}I^{\mu,s}_xf(x)=\prescript{HR}{a}D^{-\mu,s}_xf(x)$. In particular:
\begin{itemize}
	\item[(a)] $ \displaystyle\lim_{\mu \to 0^+}\prescript{H}{a}I^{\mu,s}_x f(x) = f(x) = \lim_{\mu \to 0^+}\prescript{HR}{a}D^{\mu,s}_x f(x)$;
	\item[(b)] $ \displaystyle\lim_{\mu \to (n-1)^+}\prescript{HR}{a}D^{\mu,s}_x f(x) = x^{-s}\left(x\frac{\mathrm{d}}{\mathrm{d}x}\right)^{n-1} x^s f(x), \quad\quad \lim_{\mu \to n^-}\prescript{HR}{a}D^{\mu,s}_x f(x) = x^{-s}\left(x\frac{\mathrm{d}}{\mathrm{d}x}\right)^n x^s f(x)$.
\end{itemize}
\end{lem}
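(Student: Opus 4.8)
The plan is to deduce everything directly from the conjugation representation of Theorem \ref{Thm:HTconjug}, transferring the already-established Riemann--Liouville facts of Proposition \ref{Prop:FwrtF1} through a fixed, $\mu$-independent conjugating operator. The crucial structural observation is that in \eqref{HT:conjugH} the outer operators $M_{x^s}^{-1}\circ Q_{\log x}$ and $Q_{\log x}^{-1}\circ M_{x^s}$ carry no dependence on the order $\mu$; all of the $\mu$-dependence is concentrated in the inner Riemann--Liouville factor $\prescript{}{\log a}I_x^{\mu}$ (respectively $\prescript{R}{\log a}D_x^{\mu}$). Consequently any property of the inner operator that is stable under composition with fixed operators — continuity in $\mu$, analyticity in $\mu$, and the value of a limit in $\mu$ — passes immediately to the conjugated Hadamard-type operator.

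First I would record that Proposition \ref{Prop:FwrtF1}, specialised to $g(x)=x$ so that the inner factors are the ordinary Riemann--Liouville integral and derivative, already supplies their analyticity in $\mu$ on the relevant half-plane and their continuous meeting at $\mu=0$, namely $\prescript{}{\log a}I_x^{\mu}=\prescript{R}{\log a}D_x^{-\mu}$. Conjugating this identity on both sides by the same fixed operator and invoking Theorem \ref{Thm:HTconjug} yields at once $\prescript{H}{a}I^{\mu,s}_x=\prescript{HR}{a}D^{-\mu,s}_x$, together with the analyticity and the continuous meeting claimed in the Lemma.

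For part (a) I would take the limit $\mu\to0^+$ inside the conjugation: since composition with the fixed operators commutes with the limit, the limit of the Hadamard-type integral equals the conjugation of $\lim_{\mu\to0^+}\prescript{}{\log a}I_x^{\mu}=\mathrm{Id}$, and the conjugation of the identity is the identity, so the limit is $f$, and likewise for the derivative. For part (b) the same reasoning gives
\[
\lim_{\mu\to n^-}\prescript{HR}{a}D^{\mu,s}_x f = M_{x^s}^{-1}\circ Q_{\log x}\circ\left(\frac{\mathrm{d}}{\mathrm{d}x}\right)^n\circ Q_{\log x}^{-1}\circ M_{x^s}\, f,
\]
using the integer-derivative limits of Proposition \ref{Prop:FwrtF1}(b) for the inner factor (again with $g(x)=x$, so $g'=1$), and analogously at $\mu\to(n-1)^+$.

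The one genuine computation — and the step I expect to require the most care — is identifying this conjugated integer-order operator with the stated operator $x^{-s}\big(x\frac{\mathrm{d}}{\mathrm{d}x}\big)^n x^s$. I would argue this in two stages. First, the $\mu=1$ case of the with-respect-to-function conjugation \eqref{FwrtF:conjug} with $g=\log$ gives $Q_{\log x}\circ\frac{\mathrm{d}}{\mathrm{d}x}\circ Q_{\log x}^{-1}=x\frac{\mathrm{d}}{\mathrm{d}x}$, the derivative with respect to $\log x$; inserting cancelling factors $Q_{\log x}^{-1}\circ Q_{\log x}=\mathrm{Id}$ to telescope the $n$-fold composition then yields $Q_{\log x}\circ\big(\frac{\mathrm{d}}{\mathrm{d}x}\big)^n\circ Q_{\log x}^{-1}=\big(x\frac{\mathrm{d}}{\mathrm{d}x}\big)^n$. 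Conjugating finally by $M_{x^s}$ converts this into $x^{-s}\big(x\frac{\mathrm{d}}{\mathrm{d}x}\big)^n x^s$, exactly the right-hand side of part (b). This is bookkeeping rather than a real obstacle, but it is where one must keep the conjugating operators in the correct order and apply the chain-rule identity $\frac{\mathrm{d}}{\mathrm{d}(\log x)}=x\frac{\mathrm{d}}{\mathrm{d}x}$ correctly.
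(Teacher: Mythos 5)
Your proposal is correct and follows exactly the route the paper intends: the paper states only that Lemma \ref{lem:1} is ``now very easy to prove by using Theorem \ref{Thm:HTconjug}'', i.e.\ by transferring Proposition \ref{Prop:FwrtF1} through the $\mu$-independent conjugation \eqref{HT:conjugH}, which is precisely your argument. Your explicit verification that $M_{x^s}^{-1}\circ Q_{\log x}\circ\left(\frac{\mathrm{d}}{\mathrm{d}x}\right)^n\circ Q_{\log x}^{-1}\circ M_{x^s}=x^{-s}\left(x\frac{\mathrm{d}}{\mathrm{d}x}\right)^n x^s$ is accurate and simply fills in the bookkeeping the paper leaves implicit.
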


\begin{lem}\label{lem:2}
For $\nu > 0$ and any $\mu,s$, the following relations hold:
	\begin{align*}
	\prescript{H}{a}I^{\mu,s}_x \left\{x^{-s} \left( \log \frac{x}{a} \right)^{\nu -1}\right\} &= \frac{\Gamma(\nu)}{\Gamma(\nu + \mu)}  x^{-s} \left( \log \frac{x}{a} \right)^{\nu + \mu -1}; \\
	\prescript{HR}{a}D^{\mu,s}_x \left\{x^{-s} \left( \log \frac{x}{a} \right)^{\nu -1}\right\} &= \frac{\Gamma(\nu)}{\Gamma(\nu - \mu)}  x^{-s} \left( \log \frac{x}{a} \right)^{\nu - \mu -1}.
	\end{align*}
\end{lem}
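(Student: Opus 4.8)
The plan is to exploit the conjugation identity \eqref{HT:conjugH} from Theorem \ref{Thm:HTconjug}, which expresses the Hadamard-type operators as Riemann--Liouville operators conjugated by the composite operator $T:=M_{x^s}^{-1}\circ Q_{\log x}$ examined in the Remark following that Theorem. Since $T$ acts simply as $(Tf)(x)=x^{-s}f(\log x)$ and is invertible with $T^{-1}=Q_{\log x}^{-1}\circ M_{x^s}$, the strategy is to recognise the test function $x^{-s}\big(\log\frac{x}{a}\big)^{\nu-1}$ as the $T$-image of a pure power, so that the whole computation collapses onto the elementary power rule for the classical Riemann--Liouville operators recorded (in the $g(x)=x$ case) in Proposition \ref{Prop:FwrtF3}.

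Concretely, I would first observe that $x^{-s}\big(\log\frac{x}{a}\big)^{\nu-1}=(T\psi)(x)$ where $\psi(y)=(y-\log a)^{\nu-1}$, since $(T\psi)(x)=x^{-s}\psi(\log x)=x^{-s}(\log x-\log a)^{\nu-1}$ and $\log x-\log a=\log\frac{x}{a}$. Writing \eqref{HT:conjugH} as $\prescript{H}{a}I^{\mu,s}_x=T\circ\big(\prescript{}{\log a}I^{\mu}_x\big)\circ T^{-1}$, the outer $T^{-1}$ cancels the $T$, leaving $\prescript{H}{a}I^{\mu,s}_x\{T\psi\}=T\big(\prescript{}{\log a}I^{\mu}_x\psi\big)$. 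The inner object is a Riemann--Liouville integral of a power based at $\log a$, so Proposition \ref{Prop:FwrtF3} (with $g(x)=x$ and lower limit $\log a$) yields $\prescript{}{\log a}I^{\mu}_x\psi=\frac{\Gamma(\nu)}{\Gamma(\nu+\mu)}(x-\log a)^{\nu+\mu-1}$. Applying $T$ one final time reintroduces the factor $x^{-s}$ and substitutes $\log x$ for $x$, producing $\frac{\Gamma(\nu)}{\Gamma(\nu+\mu)}x^{-s}\big(\log\frac{x}{a}\big)^{\nu+\mu-1}$, which is the first identity.

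For the derivative identity I would run the identical argument with the fractional integral $I$ replaced by the Riemann--Liouville derivative $D$ throughout, using that Theorem \ref{Thm:HTconjug} asserts the same conjugation with $D$ in place of $I$ and invoking the second line of Proposition \ref{Prop:FwrtF3} for the power rule of $\prescript{R}{\log a}D^{\mu}_x$. Alternatively, by Lemma \ref{lem:1} the two Hadamard-type operators satisfy $\prescript{H}{a}I^{\mu,s}_x=\prescript{HR}{a}D^{-\mu,s}_x$ and are analytic in $\mu$, so the derivative identity is the first one analytically continued from positive to negative order, the factor $\frac{\Gamma(\nu)}{\Gamma(\nu+\mu)}$ becoming $\frac{\Gamma(\nu)}{\Gamma(\nu-\mu)}$.

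I do not expect a genuine obstacle: once the conjugation is in place the result is essentially bookkeeping. The one point requiring care is the very first step --- verifying that the prefactor $x^{-s}$ and the base $\log\frac{x}{a}$ in the test function are exactly what is needed for conjugation by $T$ to strip everything down to the monomial $(x-\log a)^{\nu-1}$; this is precisely why the $x^{-s}$ appears on both sides of the stated identities. As an independent sanity check one could instead evaluate the defining integral directly: after cancelling $\big(\frac{t}{x}\big)^s t^{-s}=x^{-s}$ and substituting $u=\log\frac{t}{a}$, the integral reduces to the Beta integral $\int_0^{L}(L-u)^{\mu-1}u^{\nu-1}\,\mathrm{d}u=\frac{\Gamma(\mu)\Gamma(\nu)}{\Gamma(\mu+\nu)}L^{\mu+\nu-1}$ with $L=\log\frac{x}{a}$, recovering the same constant.
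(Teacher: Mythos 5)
Your proposal is correct and follows exactly the route the paper intends: the text states that Lemma \ref{lem:2} is ``very easy to prove by using Theorem \ref{Thm:HTconjug}'', i.e.\ by conjugating the Riemann--Liouville power rule of Proposition \ref{Prop:FwrtF3} through the operator $M_{x^s}^{-1}\circ Q_{\log x}$, which is precisely your computation. Your identification of the test function as the $T$-image of $(y-\log a)^{\nu-1}$ and the direct Beta-integral sanity check are both accurate.
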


\begin{lem}\label{thm1}
The Hadamard-type fractional integrals and derivatives have the following semigroup properties:
	\begin{align*}
	\prescript{H}{a}I^{\nu,s}_x \prescript{H}{a}I^{\mu,s}_x f(x) &= \prescript{H}{a}I^{\nu + \mu,s}_x f(x),\quad\quad\mu,\nu>0; \\
	\prescript{HR}{a}D^{\mu,s}_x \prescript{H}{a}I^{\nu,s}_x f(x) &= \prescript{HR}{a}D^{\mu - \nu,s}_x f(x),\quad\quad\nu>0; \\
	x^{-s}\left(x\frac{\mathrm{d}}{\mathrm{d}x}\right)^n x^s \prescript{HR}{a}D^{\mu,s}_x f(x) &= \prescript{HR}{a}D^{n+\mu,s}_x f(x),
	\end{align*}
	where in the last two identities the operators denoted by $D$ may be either fractional derivatives or fractional integrals according to the sign of the index.
\end{lem}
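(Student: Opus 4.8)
The plan is to lean entirely on the conjugation formula of Theorem \ref{Thm:HTconjug}, which reduces every Hadamard-type operator to a Riemann--Liouville operator dressed on both sides by a single fixed invertible operator. Writing $\Phi = M_{x^s}^{-1}\circ Q_{\log x}$ (the operator identified in the Remark following Theorem \ref{Thm:HTconjug}), the identity \eqref{HT:conjugH} and its derivative analogues read
\[
\prescript{H}{a}I^{\mu,s}_x=\Phi\circ\big(\prescript{}{\log a}I^{\mu}_x\big)\circ\Phi^{-1},\qquad \prescript{HR}{a}D^{\mu,s}_x=\Phi\circ\big(\prescript{R}{\log a}D^{\mu}_x\big)\circ\Phi^{-1}.
\]
The crucial structural fact is that conjugation by a fixed $\Phi$ is a homomorphism for composition: for any two operators $A,B$ one has $(\Phi A\Phi^{-1})(\Phi B\Phi^{-1})=\Phi(AB)\Phi^{-1}$, since the inner $\Phi^{-1}\circ\Phi$ cancels. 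Thus each semigroup identity for the Hadamard-type operators follows immediately from the corresponding identity for the ordinary Riemann--Liouville operators based at $\log a$.

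First I would treat the two integral-type identities. For the first, I compose the two conjugated integrals and cancel the middle factor to obtain $\Phi\circ\big(\prescript{}{\log a}I^{\nu}_x\circ\prescript{}{\log a}I^{\mu}_x\big)\circ\Phi^{-1}$; the classical Riemann--Liouville semigroup law $\prescript{}{\log a}I^{\nu}_x\circ\prescript{}{\log a}I^{\mu}_x=\prescript{}{\log a}I^{\nu+\mu}_x$ (Proposition \ref{Prop:FwrtF2} in the case $g=\mathrm{id}$) then reassembles into $\prescript{H}{a}I^{\nu+\mu,s}_x$. The second identity is identical, using instead $\prescript{R}{\log a}D^{\mu}_x\circ\prescript{}{\log a}I^{\nu}_x=\prescript{R}{\log a}D^{\mu-\nu}_x$, with the convention that a negative index denotes an integral.

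For the third identity I first observe that the integer-order operator on the left is itself a conjugate: since differentiation with respect to $\log x$ is $x\frac{\mathrm{d}}{\mathrm{d}x}=Q_{\log x}\circ\frac{\mathrm{d}}{\mathrm{d}x}\circ Q_{\log x}^{-1}$, and conjugation commutes with taking powers, we have $x^{-s}\big(x\frac{\mathrm{d}}{\mathrm{d}x}\big)^n x^s=\Phi\circ\big(\frac{\mathrm{d}}{\mathrm{d}x}\big)^n\circ\Phi^{-1}$. Composing with $\prescript{HR}{a}D^{\mu,s}_x=\Phi\circ\big(\prescript{R}{\log a}D^{\mu}_x\big)\circ\Phi^{-1}$ and cancelling the middle factor reduces the claim to the classical law $\big(\frac{\mathrm{d}}{\mathrm{d}x}\big)^n\circ\prescript{R}{\log a}D^{\mu}_x=\prescript{R}{\log a}D^{n+\mu}_x$, which reassembles into $\prescript{HR}{a}D^{n+\mu,s}_x$.

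The only genuine subtlety, and the point I expect to require care, is the legitimacy of cancelling the inner $\Phi^{-1}\circ\Phi$: this telescoping is purely formal unless the intermediate function lies in a domain on which $\Phi$ and $\Phi^{-1}$ are honest inverses and on which the Riemann--Liouville semigroup laws hold. Since $\Phi=M_{x^s}^{-1}\circ Q_{\log x}$ is a composition of an invertible multiplication and an invertible substitution, this reduces to the standard domain hypotheses for the Riemann--Liouville semigroup properties; I would simply record that the identities hold on the function spaces constructed in Section \ref{Sec:fnspace}, where the boundedness and invertibility of these operators are made precise.
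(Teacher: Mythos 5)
Your proposal is correct and is exactly the argument the paper intends: the paper states this lemma without a written proof, remarking only that it ``follows directly'' from the conjugation relations of Theorem \ref{Thm:HTconjug} together with the classical Riemann--Liouville semigroup laws, which is precisely the telescoping-conjugation argument you spell out (including the correct identification of $x^{-s}\big(x\frac{\mathrm{d}}{\mathrm{d}x}\big)^n x^s$ as the conjugate of $\big(\frac{\mathrm{d}}{\mathrm{d}x}\big)^n$). Your closing remark about domains is a reasonable caveat, and the level of rigour matches the paper's own treatment.
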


\subsection{Tempered and Hadamard-type fractional calculus with respect to functions}

Theorem \ref{Thm:HTtempered} provides a connection between tempered fractional calculus and Hadamard-type fractional calculus. It is also possible to unite both types of fractional calculus into a single, more general, definition. If Hadamard-type fractional calculus is the same as tempered fractional calculus with respect to $\log x$, then Hadamard-type fractional calculus with respect to an arbitrary (monotonic, $C^1$) function $\Psi(x)$ is the same as tempered fractional calculus with respect to an arbitrary (monotonic, $C^1$) function $\phi(x)=\log\Psi(x)$. We formalise this more general type of fractional calculus, covering both tempered and Hadamard-type as special cases, in the following definition.

    \begin{defn}\label{defn:3}
     Let $\mu\in\mathbb{R}$ with $\mu>0$ (or $\mu\in\mathbb{C}$ with $\mathrm{Re}(\mu)>0$), and $s\in\mathbb{C}$. Let $f$ be an integrable function defined on $[a,b]$ where $a<b$ in $\mathbb{R}$, and $\Psi\in C^1([a,b])$ be a positive increasing function such that $ \Psi'(x) \neq 0 $ for all $ x \in [a,b] $. Then, the Hadamard-type fractional integral of $f$ with respect to $\Psi$, or the tempered fractional integral of $f$ with respect to $\log\circ\Psi$, with order $\mu$ and parameter $s$, is defined as
	\begin{equation}
	\prescript{H}{a}I^{\mu,s}_{\Psi(x)} f(x) = \prescript{T}{a}I^{\mu,s}_{\log\Psi(x)} f(x) = \frac{1}{\Gamma(\mu)} \int_{a}^{x} \left(\frac{\Psi(t)}{\Psi(x)}\right)^{s} \left(\log \frac{\Psi(x)}{\Psi(t)}\right)^ {\mu - 1} f(t) \frac{\Psi '(t)}{\Psi(t)}\,\mathrm{d}t,\quad\quad x\in[a,b].
	\end{equation}

	Writing $n=\lfloor\mu\rfloor+1$ (or $n=\lfloor\mathrm{Re}(\mu)\rfloor+1$) so that $ n-1 \leq \mu < n\in\mathbb{Z}^+$, the Hadamard-type fractional derivative of $f$ with respect to $\Psi$, or the tempered fractional derivative of $f$ with respect to $\log\circ\Psi$, with order $\mu$ and parameter $s$, is defined as (in the Riemann--Liouville sense)
	\begin{equation}
	\prescript{HR}{a}D^{\mu,s}_{\Psi(x)} f(x) = \prescript{TR}{a}D^{\mu,s}_{\log\Psi(x)} f(x) = \prescript{H}{a}D^{n,s}_{\Psi(x)} \prescript{H}{a}I^{n - \mu,s}_{\Psi(x)} f(x), \quad\quad x\in[a,b],
	\end{equation}
or (in the Caputo sense)
	\begin{equation}
	\prescript{HC}{a}D^{\mu,s}_{\Psi(x)} f(x) = \prescript{TC}{a}D^{\mu,s}_{\log\Psi(x)} f(x) = \prescript{H}{a}I^{n - \mu,s}_{\Psi(x)} \prescript{H}{a}D^{n,s}_{\Psi(x)} f(x), \quad\quad x\in[a,b],
	\end{equation}
where the $n$th-order derivative is defined by
	\begin{equation}
	\prescript{H}{a}D^{n,s}_{\Psi(x)}f(x) = \Psi(x)^{-s} \left(\frac{\Psi(x)}{\Psi'(x)}\cdot\frac{\mathrm{d}}{\mathrm{d}x}\right)^{n} \Big[\Psi(x)^{s}f(x)\Big].
	\end{equation}
\end{defn}

\begin{rem}
	It can be seen that the fractional operators defined in the Definition \ref{defn:3} generalise several existing fractional operators.
\begin{itemize}
\item For $ \Psi(x)=x $, we obtain the Hadamard-type fractional calculus.
\item For $\Psi(x)=e^x$, we obtain the tempered fractional calculus.
\item More generally, writing $ \Psi(x) = e^{\phi(x)} $, these operators are precisely the tempered fractional operators with respect to the monotonic $C^1$ function $\phi$.
\end{itemize}
For $ s=0 $, we obtain the Hadamard fractional operators with respect to $\Psi$. This also gives rise to interesting special cases.
\begin{itemize}
\item For $s=0$ and $ \Psi(x)=e^x $, we obtain the original Riemann--Liouville and Caputo fractional calculus.
\item For $s=0$ and $ \Psi(x)=e^{x^{\rho}/\rho}$, we obtain the so-called Katugampola fractional calculus (fractional operators with respect to $x^{\rho}$, as defined by Erdelyi in 1964).
\item For $s=0$ and $ \Psi(x) = x $, we obtain the original Hadamard fractional calculus.
\item More generally, for $s=0$ and $ \Psi(x) = e^{\phi(x)} $, we obtain the Riemann--Liouville and Caputo fractional operators of a function with respect to another function $\phi$.
\end{itemize}
Finally, we note that the operators we have defined here are a generalisation of the so-called ``generalised substantial fractional operators'' introduced recently in \cite{fahad-urrehman}. This is because substantial fractional calculus is identical to tempered fractional calculus, and the operators defined in \cite{fahad-urrehman} are precisely tempered fractional operators with respect to power functions -- a special case of tempered fractional operators with respect to general increasing functions, which we have defined here.
\end{rem}

\begin{prop}
\label{Prop:ourconjug}
The Hadamard-type or tempered fractional operators with respect to a function may be written as conjugations of the Riemann--Liouville fractional operators as follows:
\[
\prescript{H}{a}I^{\mu,s}_{\Psi(x)}=Q_{\log\Psi(x)}\circ M_{e^{sx}}^{-1}\circ\big(\prescript{}{\log\Psi(a)}I_x^{\mu}\big)\circ M_{e^{sx}}\circ Q_{\log\Psi(x)}^{-1},
\]
and the same relations with the fractional integral $I$ replaced by a fractional derivative $D$ of either Riemann--Liouville or Caputo type, where the $Q$ and $M$ operators are defined as in Theorem \ref{Thm:HTconjug}.
\end{prop}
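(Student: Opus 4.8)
The plan is to follow the same two-stage conjugation strategy already used to establish \eqref{HT:conjugT} in Theorem \ref{Thm:HTconjug}, but now carried out relative to the general function $\Psi$ in place of $\log x$. Set $\phi=\log\Psi$, so that by Definition \ref{defn:3} the operator in question is simply the tempered fractional integral with respect to $\phi$, that is $\prescript{H}{a}I^{\mu,s}_{\Psi(x)}=\prescript{T}{a}I^{\mu,s}_{\phi(x)}$, with base point $\phi(a)=\log\Psi(a)$.

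First I would record two elementary operational identities. The ``with respect to a function'' construction in the tempered setting is a $Q_\phi$-conjugation: performing the substitution $\tau=\phi(t)$ in the integral for $\prescript{T}{a}I^{\mu,s}_{\phi(x)}$ written out in the proof of Theorem \ref{Thm:HTtempered} (the exact analogue of \eqref{FwrtF:conjug}, but with the tempered kernel) gives
\[
\prescript{T}{a}I^{\mu,s}_{\phi(x)}=Q_{\phi}\circ\big(\prescript{T}{\phi(a)}I^{\mu,s}_x\big)\circ Q_{\phi}^{-1}.
\]
Second, by Definition \ref{Def:tempered} the ordinary tempered integral is an $M_{e^{sx}}$-conjugation of the Riemann--Liouville integral,
\[
\prescript{T}{\phi(a)}I^{\mu,s}_x=M_{e^{sx}}^{-1}\circ\big(\prescript{}{\phi(a)}I^{\mu}_x\big)\circ M_{e^{sx}}.
\]
Substituting the second identity into the first and recalling $\phi=\log\Psi$ yields at once the claimed formula for the integral, which is precisely the $\Psi$-parametrised version of \eqref{HT:conjugT}.

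For the derivative cases the key observation is that conjugation by the single fixed invertible operator $C:=Q_{\log\Psi}\circ M_{e^{sx}}^{-1}$ preserves composition, so it suffices to conjugate each building block separately. I would first verify that the integer-order operator of Definition \ref{defn:3} satisfies $\prescript{H}{a}D^{n,s}_{\Psi(x)}=C\circ\big(\tfrac{\mathrm{d}}{\mathrm{d}x}\big)^n\circ C^{-1}$; the case $n=1$ is a short chain-rule computation showing that both sides equal $sf+\tfrac{1}{\phi'}f'$, and the general case follows because the inner multiplications by $\Psi^{\pm s}$ telescope, giving $\prescript{H}{a}D^{n,s}_{\Psi(x)}=\big(\prescript{H}{a}D^{1,s}_{\Psi(x)}\big)^n$. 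Combining this with the integral conjugation and using multiplicativity of conjugation, the Riemann--Liouville derivative becomes
\[
\prescript{HR}{a}D^{\mu,s}_{\Psi(x)}=\prescript{H}{a}D^{n,s}_{\Psi(x)}\circ\prescript{H}{a}I^{n-\mu,s}_{\Psi(x)}=C\circ\Big[\big(\tfrac{\mathrm{d}}{\mathrm{d}x}\big)^n\circ\prescript{}{\log\Psi(a)}I^{n-\mu}_x\Big]\circ C^{-1}=C\circ\big(\prescript{R}{\log\Psi(a)}D^{\mu}_x\big)\circ C^{-1},
\]
and the Caputo case is handled identically with the two inner factors taken in the opposite order.

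The hard part, and the only step requiring genuine care, is the integer-order identity $\prescript{H}{a}D^{n,s}_{\Psi(x)}=C\circ\big(\tfrac{\mathrm{d}}{\mathrm{d}x}\big)^n\circ C^{-1}$, since this is where the precise nested form of the generalised operator in Definition \ref{defn:3}, with its factors $\Psi^{\pm s}$ and $\Psi/\Psi'$, must be matched against the derivatives produced by the $Q_{\log\Psi}$ and $M_{e^{sx}}$ conjugations. Everything else reduces to a mechanical substitution, so the whole argument is a routine generalisation of the proof already given for Theorem \ref{Thm:HTconjug}.
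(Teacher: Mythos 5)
Your proposal is correct and follows essentially the same route as the paper, which likewise obtains the identity by viewing $\prescript{H}{a}I^{\mu,s}_{\Psi(x)}$ as the tempered integral with respect to $\log\Psi$, conjugating by $Q_{\log\Psi(x)}$ via \eqref{FwrtF:conjug}, and then inserting the $M_{e^{sx}}$-conjugation form of the tempered operator. The only difference is that you explicitly verify the integer-order derivative identity and propagate the conjugation through the compositions defining the Riemann--Liouville and Caputo derivatives, a step the paper leaves implicit; your verification of that step is sound.
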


\begin{proof}
This follows from combining \eqref{HT:conjugT} from Theorem \ref{Thm:HTconjug} with the known identity \eqref{FwrtF:conjug}:
\begin{align*}
\prescript{H}{a}I^{\mu,s}_{\Psi(x)}&=Q_{\Psi(x)}\circ\Big(\prescript{H}{\Psi(a)}I_x^{\mu,s}\Big)\circ Q_{\Psi(x)}^{-1} \\
&=Q_{\Psi(x)}\circ\Big(Q_{\log x}\circ M_{e^{sx}}^{-1}\circ\Big(\prescript{}{\log\Psi(a)}I_x^{\mu}\Big)\circ M_{e^{sx}}\circ Q_{\log x}^{-1}\Big)\circ Q_{\Psi(x)}^{-1},
\end{align*}
and clearly $Q_{h(x)}\circ Q_{g(x)}=Q_{g(h(x))}$. Alternatively, we can use the fact that Hadamard-type fractional calculus with respect to $\Psi$ is tempered fractional calculus with respect to $\log\circ\Psi$:
\begin{align*}
\prescript{H}{a}I^{\mu,s}_{\Psi(x)}&=\prescript{T}{a}I^{\mu,s}_{\log\Psi(x)}=Q_{\log\Psi(x)}\circ\Big(\prescript{T}{\log\Psi(a)}I_x^{\mu,s}\Big)\circ Q_{\log\Psi(x)}^{-1} \\
&=Q_{\log\Psi(x)}\circ\Big(M_{e^{sx}}^{-1}\circ\Big(\prescript{}{\log\Psi(a)}I_x^{\mu}\Big)\circ M_{e^{sx}}\Big)\circ Q_{\log\Psi(x)}^{-1}.
\end{align*}
\end{proof}

\begin{cor}
\label{Cor:ourconjug}
The Hadamard-type or tempered fractional operators with respect to a function may be written as
\[
\prescript{H}{a}I^{\mu,s}_{\Psi(x)}f(x)=\Psi(x)^{-s}\prescript{}{a}I_{\log\Psi(x)}^{\mu}\big[\Psi(x)^sf(x)\big],
\]
and the same relations with the fractional integral $I$ replaced by a fractional derivative $D$ of either Riemann--Liouville or Caputo type.
\end{cor}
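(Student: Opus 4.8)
The plan is to derive this directly from the conjugation formula in Proposition \ref{Prop:ourconjug}, using the already-established relation \eqref{FwrtF:conjug} between fractional calculus with respect to a function and conjugation by $Q$ operators. Writing $\phi=\log\circ\,\Psi$ for brevity, I first observe that since $\Psi$ is positive we have $\Psi(x)^s=e^{s\log\Psi(x)}=e^{s\phi(x)}$, so the operator on the right-hand side of the Corollary is exactly $M_{\Psi(x)^s}^{-1}\circ\big(\prescript{}{a}I_{\log\Psi(x)}^{\mu}\big)\circ M_{\Psi(x)^s}$, that is, the conjugation of the Riemann--Liouville fractional integral with respect to $\phi$ by multiplication by $\Psi^s$. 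The goal is thus to show that this operator coincides with the fivefold conjugation given in Proposition \ref{Prop:ourconjug}.

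First I would expand the inner operator $\prescript{}{a}I_{\log\Psi(x)}^{\mu}$ using \eqref{FwrtF:conjug} with $g=\phi$, obtaining the composition
\[
M_{e^{s\phi(x)}}^{-1}\circ Q_{\phi}\circ\big(\prescript{}{\phi(a)}I_x^{\mu}\big)\circ Q_{\phi}^{-1}\circ M_{e^{s\phi(x)}}.
\]
The whole content of the proof is then the pair of commutation identities
\[
M_{e^{s\phi(x)}}^{-1}\circ Q_{\phi}=Q_{\phi}\circ M_{e^{sx}}^{-1},\qquad Q_{\phi}^{-1}\circ M_{e^{s\phi(x)}}=M_{e^{sx}}\circ Q_{\phi}^{-1},
\]
which say that a multiplication by $e^{s\phi(x)}=\Psi(x)^s$ may be passed through $Q_\phi$ (respectively $Q_\phi^{-1}$) at the cost of replacing it by the plain exponential $e^{sx}$. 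Substituting these into the displayed composition immediately produces the fivefold conjugation of Proposition \ref{Prop:ourconjug}, completing the argument for the fractional integral; the Riemann--Liouville and Caputo derivative cases are then identical word for word, since Proposition \ref{Prop:ourconjug} asserts the same conjugation for all three operators.

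I expect the commutation identities to be the only real step, and I would verify them by evaluating both sides on an arbitrary function. Applying $Q_\phi\circ M_{e^{sx}}^{-1}$ to $u$ gives $e^{-s\phi(x)}u(\phi(x))$, while $M_{e^{s\phi(x)}}^{-1}\circ Q_\phi$ gives the same; the second identity is checked the same way, the key point being $\phi(\phi^{-1}(x))=x$, so that the factor $e^{s\phi(x)}$ becomes $e^{sx}$ after $Q_\phi^{-1}$ is applied. No integrability or smoothness issues arise beyond those already assumed in Definition \ref{defn:3}.

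Alternatively, for the integral case one can bypass the operator formalism and substitute directly into the definitions: writing out $\prescript{}{a}I_{\log\Psi(x)}^{\mu}\big[\Psi(x)^sf(x)\big]$ from Definition \ref{Def:FwrtF} with $g=\log\Psi$ uses $\frac{\mathrm{d}}{\mathrm{d}t}\log\Psi(t)=\Psi'(t)/\Psi(t)$ and $\log\Psi(x)-\log\Psi(t)=\log\frac{\Psi(x)}{\Psi(t)}$, and then multiplying through by the outer factor $\Psi(x)^{-s}$ and combining $\Psi(x)^{-s}\Psi(t)^s=\big(\Psi(t)/\Psi(x)\big)^s$ recovers exactly the integrand of $\prescript{H}{a}I^{\mu,s}_{\Psi(x)}f(x)$ in Definition \ref{defn:3}. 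This direct route is cleanest for the integral but becomes cumbersome for the derivatives, where the operational argument above is preferable.
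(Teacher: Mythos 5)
Your argument is correct and is essentially the paper's own proof: the paper likewise reduces the Corollary to the single commutation identity $Q_{\log\Psi(x)}\circ M_{e^{sx}}^{-1}=M_{\Psi(x)^{s}}^{-1}\circ Q_{\log\Psi(x)}$, verified exactly as you do by applying both sides to an arbitrary function, and then combines it with Proposition \ref{Prop:ourconjug} and the identity \eqref{FwrtF:conjug}. You merely run the chain of conjugations starting from the right-hand side instead of the left (and add an optional direct integrand check), which changes nothing of substance.
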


\begin{proof}
It will be enough to show that
\[
Q_{\log\Psi(x)}\circ M_{e^{sx}}^{-1}=M_{\Psi(x)^{s}}^{-1}Q_{\log\Psi(x)},
\]
since the result will then follow from the identity \eqref{FwrtF:conjug} together with the result of Proposition \ref{Prop:ourconjug}. We check how these operators work on a function $f(x)$:
\begin{align*}
f&\mapsto f(x). \\
M_{e^{sx}}^{-1}f&\mapsto e^{-sx}f(x); \\
Q_{\log\Psi(x)}\circ M_{e^{sx}}^{-1}f&\mapsto e^{-s\log\Psi(x)}f(\log\Psi(x))=\Psi(x)^{-s}f(\log\Psi(x)). \\
Q_{\log\Psi(x)}f&\mapsto f(\log\Psi(x)); \\
M_{\Psi(x)^s}^{-1}\circ Q_{\log\Psi(x)}f&\mapsto \Psi(x)^{-s}f(\log\Psi(x)).
\end{align*}
\end{proof}

Once again, given the conjugation relations with the original Riemann--Liouville fractional calculus, it is easy to prove some basic properties (analogous to Propositions \ref{Prop:FwrtF1} to \ref{Prop:FwrtF3} above) of Hadamard-type fractional calculus with respect to a function.

\begin{prop}
\label{Prop:ourlem1}
The expressions $\prescript{H}{a}I^{\mu,s}_{\Psi(x)}f(x)$ and $\prescript{HR}{a}D^{\mu,s}_{\Psi(x)}f(x)$ each define continuous (indeed, analytic) functions of $\mu$ on the respective half-lines (or half-planes in $\mathbb{C}$). If $f$ is continuous and differentiable so that both are defined, then they meet continuously at $\mu=0$, so that we can write $\prescript{H}{a}I^{\mu,s}_{\Psi(x)}f(x)=\prescript{HR}{a}D^{-\mu,s}_{\Psi(x)}f(x)$. In particular:
\begin{itemize}
	\item[(a)] $ \displaystyle\lim_{\mu \to 0^+}\prescript{H}{a}I^{\mu,s}_{\Psi(x)} f(x) = f(x) = \lim_{\mu \to 0^+}\prescript{HR}{a}D^{\mu,s}_{\Psi(x)} f(x)$;
	\item[(b)] $ \displaystyle\lim_{\mu \to (n-1)^+}\prescript{HR}{a}D^{\mu,s}_x f(x) = \Psi(x)^{-s}\left(\frac{\Psi(x)}{\Psi'(x)}\cdot\frac{\mathrm{d}}{\mathrm{d}x}\right)^{n-1} \Psi(x)^s f(x)$, \\ $\displaystyle {\color{white}m}\lim_{\mu \to n^-}\prescript{HR}{a}D^{\mu,s}_{\Psi(x)} f(x) = \Psi(x)^{-s}\left(\frac{\Psi(x)}{\Psi'(x)}\cdot\frac{\mathrm{d}}{\mathrm{d}x}\right)^n \Psi(x)^s f(x)$.
\end{itemize}
\end{prop}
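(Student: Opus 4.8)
The plan is to reduce everything to Proposition \ref{Prop:FwrtF1} via the conjugation identity of Corollary \ref{Cor:ourconjug}, which expresses the generalised operators in terms of ordinary Riemann--Liouville fractional calculus with respect to the function $g(x)=\log\Psi(x)$. Concretely, Corollary \ref{Cor:ourconjug} gives $\prescript{H}{a}I^{\mu,s}_{\Psi(x)}f(x)=\Psi(x)^{-s}\,\prescript{}{a}I_{\log\Psi(x)}^{\mu}\big[\Psi(x)^sf(x)\big]$ together with the analogous formula in which $I$ is replaced by the Riemann--Liouville derivative $D$. Since $\Psi\in C^1([a,b])$ is positive, the multiplier $\Psi(x)^{\pm s}$ is a fixed (i.e.\ $\mu$-independent) function of $x$, so every statement about $\mu$-dependence transfers directly from the inner operator $\prescript{}{a}I^{\mu}_{\log\Psi(x)}$ to the conjugated operator.

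First I would establish the analyticity and the continuous meeting at $\mu=0$. By Proposition \ref{Prop:FwrtF1} (applied with $g=\log\Psi$), the map $\mu\mapsto\prescript{}{a}I^{\mu}_{\log\Psi(x)}h(x)$ is analytic on the appropriate half-plane and meets $\prescript{R}{a}D^{\mu}_{\log\Psi(x)}h(x)$ continuously at $\mu=0$ for suitable $h$. Taking $h(x)=\Psi(x)^sf(x)$ and pre-multiplying by the $\mu$-independent factor $\Psi(x)^{-s}$ preserves analyticity and the continuous meeting; this yields the opening sentences of the Proposition and the identity $\prescript{H}{a}I^{\mu,s}_{\Psi(x)}f(x)=\prescript{HR}{a}D^{-\mu,s}_{\Psi(x)}f(x)$.

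Next I would verify (a) and (b) by passing the limits through the conjugation. For (a), Proposition \ref{Prop:FwrtF1}(a) gives $\lim_{\mu\to0^+}\prescript{}{a}I^{\mu}_{\log\Psi(x)}h(x)=h(x)$; with $h=\Psi^sf$ the prefactor $\Psi^{-s}$ cancels the $\Psi^s$ and leaves $f(x)$, and the same computation applied to the derivative form gives the second equality in (a). For (b), I would use Proposition \ref{Prop:FwrtF1}(b) with $g=\log\Psi$, where the key substitution is $\tfrac{1}{g'(x)}=\tfrac{\Psi(x)}{\Psi'(x)}$ (valid since $\Psi'\neq0$). Thus $\lim_{\mu\to n^-}\prescript{R}{a}D^{\mu}_{\log\Psi(x)}h(x)=\big(\tfrac{\Psi(x)}{\Psi'(x)}\tfrac{\mathrm{d}}{\mathrm{d}x}\big)^n h(x)$, and restoring the prefactor $\Psi(x)^{-s}$ with $h=\Psi^sf$ reproduces exactly the $n$th-order operator $\prescript{H}{a}D^{n,s}_{\Psi(x)}$ of Definition \ref{defn:3}; the $(n-1)^+$ limit is identical with $n-1$ in place of $n$.

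The only point requiring care is checking that the hypotheses of Proposition \ref{Prop:FwrtF1} are satisfied by the conjugated function $h=\Psi^sf$ rather than by $f$ itself. Since $\Psi$ is positive and $C^1$, the function $\Psi^s$ is $C^1$, so whenever $f$ is continuous and differentiable (as assumed), so is $h$; hence every invocation of Proposition \ref{Prop:FwrtF1} is legitimate. I expect no genuine obstacle, as the whole argument is simply a transport of the known Riemann--Liouville facts across the conjugation isomorphism of Corollary \ref{Cor:ourconjug}.
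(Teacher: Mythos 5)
Your proposal is correct and follows essentially the same route as the paper: both reduce the statement to Proposition \ref{Prop:FwrtF1} with $g=\log\Psi$ via the conjugation identity of Corollary \ref{Cor:ourconjug}, noting that the $\mu$-independent prefactor $\Psi(x)^{-s}$ preserves analyticity and the limits. Your write-up is somewhat more detailed (in particular the explicit check that $h=\Psi^s f$ inherits the required regularity), but the underlying argument is identical.
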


\begin{proof}
The main result is clear from the conjugation relations proved above. We have
\[
\prescript{H}{a}I^{\mu,s}_{\Psi(x)}f(x)=\Psi(x)^{-s}\prescript{}{a}I_{\log\Psi(x)}^{\mu}\big[\Psi(x)^sf(x)\big],
\]
and the result of applying $\prescript{}{a}I_{\log\Psi(x)}^{\mu}$ is a continuous (analytic) function of $\mu$ (by Proposition \ref{Prop:FwrtF1} above), so we can say the same about the result of applying $\prescript{H}{a}I^{\mu,s}_{\Psi(x)}$. Then (a) is clear from continuity, and (b) is a special case of the above identity since $\frac{\mathrm{d}}{\mathrm{d}(\log\Psi(x))}=\frac{\Psi(x)}{\Psi'(x)}\cdot\frac{\mathrm{d}}{\mathrm{d}x}$.
\end{proof}

\begin{prop}
\label{Prop:ourlem2}
The Hadamard-type or tempered fractional operators with respect to a function have the following semigroup properties:
	\begin{align*}
	\prescript{H}{a}I^{\nu,s}_{\Psi(x)} \prescript{H}{a}I^{\mu,s}_{\Psi(x)} f(x) &= \prescript{H}{a}I^{\nu + \mu,s}_{\Psi(x)} f(x),\quad\quad\mu,\nu>0; \\
	\prescript{HR}{a}D^{\mu,s}_{\Psi(x)} \prescript{H}{a}I^{\nu,s}_{\Psi(x)} f(x) &= \prescript{HR}{a}D^{\mu - \nu,s}_{\Psi(x)} f(x),\quad\quad\nu>0; \\
	\Psi(x)^{-s}\left(\frac{\Psi(x)}{\Psi'(x)}\cdot\frac{\mathrm{d}}{\mathrm{d}x}\right)^n \Psi(x)^s \prescript{HR}{a}D^{\mu,s}_{\Psi(x)} f(x) &= \prescript{HR}{a}D^{n+\mu,s}_{\Psi(x)} f(x),
	\end{align*}
	where in the last two identities the operators denoted by $D$ may be either fractional derivatives or fractional integrals according to the sign of the index.
\end{prop}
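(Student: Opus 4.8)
The plan is to lean entirely on the conjugation relation of Corollary~\ref{Cor:ourconjug}, which expresses each of our operators as a conjugate of the corresponding Riemann--Liouville operator taken with respect to $\log\Psi$. Writing $M=M_{\Psi(x)^s}$ for multiplication by $\Psi(x)^s$ (so that $M^{-1}$ is multiplication by $\Psi(x)^{-s}$), Corollary~\ref{Cor:ourconjug} reads, in operator form,
\[
\prescript{H}{a}I^{\mu,s}_{\Psi(x)}=M^{-1}\circ\prescript{}{a}I^{\mu}_{\log\Psi(x)}\circ M,\qquad \prescript{HR}{a}D^{\mu,s}_{\Psi(x)}=M^{-1}\circ\prescript{R}{a}D^{\mu}_{\log\Psi(x)}\circ M.
\]
The single observation driving the whole proof is that conjugation by the fixed operator $M$ respects composition: when two such conjugates are composed, the inner $M$ of the first factor cancels against the outer $M^{-1}$ of the second, leaving $M^{-1}\circ(\cdots)\circ M$ with the bare Riemann--Liouville operators sitting side by side inside the brackets.

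First I would establish the first identity. Composing and cancelling the central $M\circ M^{-1}$ gives $\prescript{H}{a}I^{\nu,s}_{\Psi(x)}\prescript{H}{a}I^{\mu,s}_{\Psi(x)}=M^{-1}\circ\big(\prescript{}{a}I^{\nu}_{\log\Psi(x)}\circ\prescript{}{a}I^{\mu}_{\log\Psi(x)}\big)\circ M$, and the bracketed composition equals $\prescript{}{a}I^{\nu+\mu}_{\log\Psi(x)}$ by the first semigroup law of Proposition~\ref{Prop:FwrtF2}; reconjugating recovers $\prescript{H}{a}I^{\nu+\mu,s}_{\Psi(x)}$. The second identity is identical in structure: the cancellation leaves $M^{-1}\circ\big(\prescript{R}{a}D^{\mu}_{\log\Psi(x)}\circ\prescript{}{a}I^{\nu}_{\log\Psi(x)}\big)\circ M$, and the second law of Proposition~\ref{Prop:FwrtF2} collapses the bracket to $\prescript{R}{a}D^{\mu-\nu}_{\log\Psi(x)}$, which reconjugates to $\prescript{HR}{a}D^{\mu-\nu,s}_{\Psi(x)}$.

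For the third identity, the only extra step is to recognise the integer-order operator on the left as a conjugate as well. Since $\tfrac{\Psi(x)}{\Psi'(x)}\cdot\tfrac{\mathrm{d}}{\mathrm{d}x}=\tfrac{\mathrm{d}}{\mathrm{d}(\log\Psi(x))}$, the operator $\Psi(x)^{-s}\big(\tfrac{\Psi(x)}{\Psi'(x)}\cdot\tfrac{\mathrm{d}}{\mathrm{d}x}\big)^n\Psi(x)^s$ is precisely $M^{-1}\circ\big(\tfrac{\mathrm{d}}{\mathrm{d}(\log\Psi(x))}\big)^n\circ M$, namely the conjugate of the $n$th power of differentiation with respect to $\log\Psi$. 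The same cancellation then reduces the left-hand side to $M^{-1}\circ\big(\big(\tfrac{\mathrm{d}}{\mathrm{d}(\log\Psi(x))}\big)^n\circ\prescript{R}{a}D^{\mu}_{\log\Psi(x)}\big)\circ M$, and the third law of Proposition~\ref{Prop:FwrtF2} turns the bracket into $\prescript{R}{a}D^{n+\mu}_{\log\Psi(x)}$, yielding $\prescript{HR}{a}D^{n+\mu,s}_{\Psi(x)}$.

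I expect no genuine obstacle here; the only point requiring care is the bookkeeping. One must confirm that each interposed $M\circ M^{-1}$ really is the identity so that the three Riemann--Liouville laws transfer verbatim, and that the convention by which the symbol $D$ denotes a derivative or an integral according to the sign of its index is inherited unchanged through the conjugation, since that same convention already governs the right-hand sides of the last two laws in Proposition~\ref{Prop:FwrtF2}.
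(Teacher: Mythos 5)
Your proposal is correct and follows essentially the same route as the paper: the paper's own proof is a one-line appeal to the conjugation relation (Proposition~\ref{Prop:ourconjug}) combined with the Riemann--Liouville semigroup laws, and your argument is just that appeal written out in detail via the equivalent form in Corollary~\ref{Cor:ourconjug} and Proposition~\ref{Prop:FwrtF2}. The only cosmetic difference is that you conjugate down to Riemann--Liouville operators with respect to $\log\Psi$ rather than all the way to the plain Riemann--Liouville operators; both cancellations work identically.
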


\begin{proof}
This follows directly from the conjugation relation given by Proposition \ref{Prop:ourconjug}, combined with the semigroup properties of the original Riemann--Liouville integrals and derivatives.
\end{proof}

\begin{prop}
\label{Prop:ourlem3}
For $\nu>0$ and any $\mu,s$, the following relations hold:
	\begin{align*}
	\prescript{H}{a}I^{\mu,s}_{\Psi(x)} \left\{\Psi(x)^{-s} \left( \log \frac{\Psi(x)}{\Psi(a)} \right)^{\nu -1}\right\} &= \frac{\Gamma(\nu)}{\Gamma(\nu + \mu)}  \Psi(x)^{-s} \left( \log \frac{\Psi(x)}{\Psi(a)} \right)^{\nu + \mu -1}; \\
	\prescript{HR}{a}D^{\mu,s}_{\Psi(x)} \left\{\Psi(x)^{-s} \left( \log \frac{\Psi(x)}{\Psi(a)} \right)^{\nu -1}\right\} &= \frac{\Gamma(\nu)}{\Gamma(\nu - \mu)}  \Psi(x)^{-s} \left( \log \frac{\Psi(x)}{\Psi(a)} \right)^{\nu - \mu -1}.
	\end{align*}
\end{prop}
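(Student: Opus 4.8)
The plan is to reduce everything to the power-rule identity for plain Riemann--Liouville fractional calculus with respect to the function $g(x)=\log\Psi(x)$, exploiting the conjugation formula of Corollary \ref{Cor:ourconjug}. The key observation is that the test function $\Psi(x)^{-s}\big(\log\frac{\Psi(x)}{\Psi(a)}\big)^{\nu-1}$ has been chosen precisely so that the prefactor $\Psi(x)^{-s}$ is annihilated by the inner multiplication by $\Psi(x)^{s}$ appearing in the conjugation, leaving behind a clean power of $g(x)-g(a)$ to which the standard result applies.

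First I would apply Corollary \ref{Cor:ourconjug}, writing
\[
\prescript{H}{a}I^{\mu,s}_{\Psi(x)}f(x)=\Psi(x)^{-s}\,\prescript{}{a}I_{\log\Psi(x)}^{\mu}\big[\Psi(x)^sf(x)\big],
\]
and substituting $f(x)=\Psi(x)^{-s}\big(\log\frac{\Psi(x)}{\Psi(a)}\big)^{\nu-1}$. The inner factor $\Psi(x)^s f(x)$ then collapses to $\big(\log\frac{\Psi(x)}{\Psi(a)}\big)^{\nu-1}$. Next I would set $g(x)=\log\Psi(x)$ and note that $g(x)-g(a)=\log\frac{\Psi(x)}{\Psi(a)}$, so that the remaining argument is exactly $(g(x)-g(a))^{\nu-1}$. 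Applying the first identity of Proposition \ref{Prop:FwrtF3} yields
\[
\prescript{}{a}I_{g(x)}^{\mu}\big[(g(x)-g(a))^{\nu-1}\big]=\frac{\Gamma(\nu)}{\Gamma(\nu+\mu)}\,(g(x)-g(a))^{\nu+\mu-1},
\]
and reinstating the outer factor $\Psi(x)^{-s}$ together with $g(x)-g(a)=\log\frac{\Psi(x)}{\Psi(a)}$ gives precisely the claimed formula for the integral.

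For the derivative identity I would run the identical argument, using instead the $D$-version of Corollary \ref{Cor:ourconjug} and the second identity of Proposition \ref{Prop:FwrtF3}, which produces the factor $\frac{\Gamma(\nu)}{\Gamma(\nu-\mu)}$ and the exponent $\nu-\mu-1$. No genuine obstacle arises here: the entire content of the proposition is already encoded in the conjugation structure and the power rule for fractional calculus with respect to functions. The only point requiring a moment's care is verifying that $\Psi(x)^{-s}$ is exactly the prefactor needed to cancel against the inner multiplication by $\Psi(x)^{s}$ — but this is immediate once the operators are written out, and the analyticity in $\mu$ established in Proposition \ref{Prop:ourlem1} ensures the single calculation covers both the integral ($\mu>0$) and derivative ($\mu<0$) cases simultaneously.
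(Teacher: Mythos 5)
Your proposal is correct and follows exactly the paper's own route: the published proof is a one-line appeal to Corollary \ref{Cor:ourconjug} combined with Proposition \ref{Prop:FwrtF3}, which is precisely the conjugation-plus-power-rule argument you spell out, including the cancellation of $\Psi(x)^{-s}$ against the inner multiplication by $\Psi(x)^{s}$ and the identification $g(x)-g(a)=\log\frac{\Psi(x)}{\Psi(a)}$ for $g=\log\Psi$. You have simply written out the details the paper leaves implicit.
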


\begin{proof}
This follows directly from Corollary \ref{Cor:ourconjug} together with Proposition \ref{Prop:FwrtF3} for Riemann--Liouville fractional calculus with respect to functions.
\end{proof}

\begin{rem}
Combining the second part of Proposition \ref{Prop:ourlem2} with the first part of Proposition \ref{Prop:ourlem1}, it is clear that, on the space of continuous
functions, the Hadamard-type fractional differential operator with respect to a function in the Riemann--Liouville sense is the left inverse of the corresponding integral operator. This is no longer necessarily true for the Hadamard-type fractional differential operator with respect to a function in the Caputo sense. Just like in the original fractional calculus, the Riemann--Liouville type derivative is a left inverse of the integral but the Caputo type derivative is not. Indeed, it was shown in \cite[Counter-Example 1]{cichon-salem2} that there is a H\"olderian but not absolutely continuous function $f$ such that $\prescript{HC}{0}D^{\mu,0}_{e^x}\prescript{H}{0}D^{\mu,0}_{e^x}f(x)\neq f(x)$ for any $\mu\in(0,1)$. For further discussion of the appropriate function spaces for the generalised operators defined here, we refer the reader to Section \ref{Sec:fnspace} below.
\end{rem}

Another theorem, a continuation of Proposition \ref{Prop:ourlem2} above, concerns taking integrals of derivatives in Hadamard-type fractional calculus with respect to functions. This is analogous to the result for Riemann--Liouville fractional calculus \cite{Samko},
\begin{equation}
\label{NewtonLeibniz:RL}
\prescript{}{a}I^{\mu}_x\prescript{R}{a}D^{\mu}_xf(x)=f(x)-\sum_{k=1}^n\frac{(x-a)^{\mu-k}}{\Gamma(\mu-k+1)}\lim_{x\rightarrow a^+}\prescript{R}{a}D^{\mu-k}_xf(x),
\end{equation}
and it is less easy to prove from operator theory than Proposition \ref{Prop:ourlem2}, because here there is no longer an actual semigroup property.

\begin{thm}\label{theorem2B}
Let $\mu > 0$, $n = \lfloor\mu\rfloor + 1$, $s \in \R$, $\Psi$ an increasing positive function on $[a,b]\subset\mathbb{R}$. If $ f(x) $ is a function such that $\prescript{H}{a}I^{n-\mu,s}_{\Psi(x)}f\in AC_{\delta^{\Psi}, s}^{n} [a,b]$, then we have
\[
	\prescript{H}{a}I^{\mu,s}_{\Psi(x)} \prescript{HR}{a}D^{\mu,s}_{\Psi(x)} f(x) = f(x) - \left(\frac{\Psi(a)}{\Psi(x)}\right)^s\sum_{k=1}^{n} \frac{1}{\Gamma(\mu-k+1)} \left( \log \frac{\Psi(x)}{\Psi(a)} \right)^{\mu-k}\lim_{x\to a^+} \prescript{HR}{a}D^{\mu-k,s}_{\Psi(x)} f(x).
	\]
In particular, for $ 0<\mu<1 $ we have
	\[
	\prescript{H}{a}I^{\mu,s}_{\Psi(x)} \prescript{HR}{a}D^{\mu,s}_{\Psi(x)} f(x) = f(x) - \frac{1}{\Gamma(\mu)} \left(\frac{\Psi(a)}{\Psi(x)}\right)^s \left( \log \frac{\Psi(x)}{\Psi(a)} \right)^{\mu-1}\lim_{x\to a^+} \prescript{H}{a}I^{1-\mu,s}_{\Psi(x)} f(x).
	\]
\end{thm}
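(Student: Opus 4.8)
The plan is to reduce the stated identity entirely to the classical Riemann--Liouville Newton--Leibniz formula \eqref{NewtonLeibniz:RL} by unwinding the conjugation relations already established in Corollary \ref{Cor:ourconjug} and equation \eqref{FwrtF:conjug}. Throughout I would write $g=\log\circ\Psi$ (which is monotonic and $C^1$, since $\Psi$ is positive and increasing) and set $F(x)=\Psi(x)^sf(x)$. Applying Corollary \ref{Cor:ourconjug} to the derivative factor gives $\Psi(x)^s\,\prescript{HR}{a}D^{\mu,s}_{\Psi(x)}f(x)=\prescript{R}{a}D^{\mu}_{g(x)}F(x)$, so that the outer factor $\Psi(x)^s$ cancels the $\Psi(x)^{-s}$ produced when Corollary \ref{Cor:ourconjug} is applied to the integral factor, leaving
\[
\prescript{H}{a}I^{\mu,s}_{\Psi(x)}\prescript{HR}{a}D^{\mu,s}_{\Psi(x)}f(x)=\Psi(x)^{-s}\,\prescript{}{a}I^{\mu}_{g(x)}\prescript{R}{a}D^{\mu}_{g(x)}F(x).
\]
Thus everything reduces to proving a Newton--Leibniz formula for the Riemann--Liouville operators taken with respect to $g$.

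To obtain that intermediate formula, I would use \eqref{FwrtF:conjug}; since the inner operators $Q_g^{-1}\circ Q_g$ cancel, the composite becomes $\prescript{}{a}I^{\mu}_{g(x)}\prescript{R}{a}D^{\mu}_{g(x)}=Q_g\circ\big(\prescript{}{g(a)}I^{\mu}_x\,\prescript{R}{g(a)}D^{\mu}_x\big)\circ Q_g^{-1}$. Writing $G=Q_g^{-1}F$, I would apply the classical formula \eqref{NewtonLeibniz:RL} with lower limit $g(a)$ to $G$ and then apply $Q_g$, i.e. substitute $y=g(x)$. This substitution turns $(y-g(a))^{\mu-k}$ into $(g(x)-g(a))^{\mu-k}$; because $g$ is increasing the one-sided limit $y\to g(a)^+$ becomes $x\to a^+$; and the conjugation identity $\prescript{R}{a}D^{\mu-k}_{g(x)}F=Q_g\circ\prescript{R}{g(a)}D^{\mu-k}_x\circ Q_g^{-1}F$ identifies the boundary coefficients as $\lim_{x\to a^+}\prescript{R}{a}D^{\mu-k}_{g(x)}F(x)$. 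Using $Q_gG=F$, this yields
\[
\prescript{}{a}I^{\mu}_{g(x)}\prescript{R}{a}D^{\mu}_{g(x)}F(x)=F(x)-\sum_{k=1}^n\frac{(g(x)-g(a))^{\mu-k}}{\Gamma(\mu-k+1)}\lim_{x\to a^+}\prescript{R}{a}D^{\mu-k}_{g(x)}F(x).
\]

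Finally I would substitute back. Multiplying by $\Psi(x)^{-s}$ and inserting $g=\log\Psi$, $F=\Psi^sf$, the leading term is $\Psi(x)^{-s}F(x)=f(x)$, while $(g(x)-g(a))^{\mu-k}=\big(\log(\Psi(x)/\Psi(a))\big)^{\mu-k}$. By Corollary \ref{Cor:ourconjug} once more, $\prescript{R}{a}D^{\mu-k}_{g(x)}F(x)=\Psi(x)^s\,\prescript{HR}{a}D^{\mu-k,s}_{\Psi(x)}f(x)$, and since $\Psi$ is continuous and positive at $a$ the factor $\Psi(x)^s\to\Psi(a)^s$ may be pulled out of the limit; collecting $\Psi(x)^{-s}\Psi(a)^s=(\Psi(a)/\Psi(x))^s$ reproduces the stated formula exactly. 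For the case $0<\mu<1$ we have $n=1$ and a single term $k=1$, and $\prescript{HR}{a}D^{\mu-1,s}_{\Psi(x)}f=\prescript{H}{a}I^{1-\mu,s}_{\Psi(x)}f$ by the negative-order identity in Proposition \ref{Prop:ourlem1}, giving the displayed particular case.

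The hard part will not be any of these manipulations, which are routine once the conjugations are set up, but rather justifying that \eqref{NewtonLeibniz:RL} is legitimately applicable to $G$. Concretely, I must verify that the hypothesis $\prescript{H}{a}I^{n-\mu,s}_{\Psi(x)}f\in AC_{\delta^{\Psi},s}^{n}[a,b]$ translates, under the two conjugations above, precisely into the condition $\prescript{}{g(a)}I^{n-\mu}_xG\in AC^n[g(a),g(b)]$ under which the classical formula holds. Since the space $AC_{\delta^{\Psi},s}^{n}[a,b]$ is defined exactly as the pullback of $AC^n$ along these conjugating operators, this amounts to unwinding that definition, but it is the genuine content of the hypothesis and is the step I would check most carefully; the accompanying limit interchange and gamma-function bookkeeping are straightforward.
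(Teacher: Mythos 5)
Your proposal is correct and follows essentially the same route as the paper's own proof: both conjugate the composite operator down to the classical Riemann--Liouville setting and invoke the Newton--Leibniz formula \eqref{NewtonLeibniz:RL}, then translate the boundary terms back (the paper phrases the conjugation via Proposition \ref{Prop:ourconjug} with the $Q$ and $M$ operators, while you use the equivalent form of Corollary \ref{Cor:ourconjug}, which is a purely cosmetic difference). Your closing remark about checking that the hypothesis $\prescript{H}{a}I^{n-\mu,s}_{\Psi(x)}f\in AC_{\delta^{\Psi},s}^{n}[a,b]$ pulls back to the classical $AC^n$ condition is a fair point of care, and is consistent with how the space is characterised in Theorem \ref{ourthm4}.
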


\begin{proof}
\allowdisplaybreaks
It is convenient to rewrite the Hadamard-type fractional operators with respect to functions using the form of Proposition \ref{Prop:ourconjug}.  We then make use of \eqref{NewtonLeibniz:RL}:
\begin{align*}
\prescript{H}{a}I^{\mu,s}_{\Psi(x)} \prescript{HR}{a}D^{\mu,s}_{\Psi(x)} f(x) &= Q_{\log\Psi(x)}\circ M_{e^{sx}}^{-1}\circ\Big(\prescript{}{\log\Psi(a)}I_x^{\mu}\circ\prescript{R}{\log\Psi(a)}D_x^{\mu}\Big)\circ M_{e^{sx}}\circ Q_{\log\Psi(x)}^{-1}f(x) \\
&\hspace{-2cm}= Q_{\log\Psi(x)}\circ M_{e^{sx}}^{-1}\circ\Big(\prescript{}{\log\Psi(a)}I_x^{\mu}\circ\prescript{R}{\log\Psi(a)}D_x^{\mu}\Big)\Big[e^{sx}f\big(\Psi^{-1}(e^x)\big)\Big] \\
&\hspace{-2cm}= Q_{\log\Psi(x)}\circ M_{e^{sx}}^{-1}\left[e^{sx}f\big(\Psi^{-1}(e^x)\big)-\sum_{k=1}^n\frac{(x-\log\Psi(a))^{\mu-k}}{\Gamma(\mu-k+1)}\lim_{x\rightarrow \log\Psi(a)^+}\prescript{R}{a}D^{\mu-k}_x\Big[e^{sx}f\big(\Psi^{-1}(e^x)\big)\Big]\right] \\
&\hspace{-2cm}=Q_{\log\Psi(x)}\circ M_{e^{sx}}^{-1}\circ M_{e^{sx}}\circ Q_{\log\Psi(x)}^{-1}f(x) \\
&-\sum_{k=1}^n\frac{Q_{\log\Psi(x)}\circ M_{e^{sx}}^{-1}\Big[(x-\log\Psi(a))^{\mu-k}\Big]}{\Gamma(\mu-k+1)}\lim_{x\rightarrow \log\Psi(a)^+}\Big[\prescript{R}{a}D^{\mu-k}_x\circ M_{e^{sx}}\circ Q_{\log\Psi(x)}^{-1}f(x)\Big] \\
&\hspace{-2cm}=f(x)-\sum_{k=1}^n\frac{e^{-s\log\Psi(x)}\big(\log\Psi(x)-\log\Psi(a)\big)^{\mu-k}}{\Gamma(\mu-k+1)} \\
&\hspace{2cm}\times\lim_{x\rightarrow a^+} \Big(\Psi(x)^s \Big[Q_{\log\Psi(x)}\circ M_{e^{sx}}^{-1}\circ\prescript{R}{a}D^{\mu-k}_x\circ M_{e^{sx}}\circ Q_{\log\Psi(x)}^{-1}f(x)\Big]\Big) \\
&\hspace{-2cm}=f(x)-\sum_{k=1}^n\frac{\Psi(x)^{-s}}{\Gamma(\mu-k+1)} \left(\log\frac{\Psi(x)}{\Psi(a)}\right)^{\mu-k} \Psi(a)^s \lim_{x\rightarrow a^+} \Big[\prescript{HR}{a}D^{\mu-k}_{\Psi(x)}f(x)\Big].
\end{align*}
	Hence, we get our desired result.
\end{proof}

\begin{figure}[H]
	\centering
	\subfloat[  $ \Psi(x)= \sqrt{x} $, $0.1\leq\mu\leq 0.9$.]
	{\label{Fig1a}\includegraphics[width=0.33\textwidth]{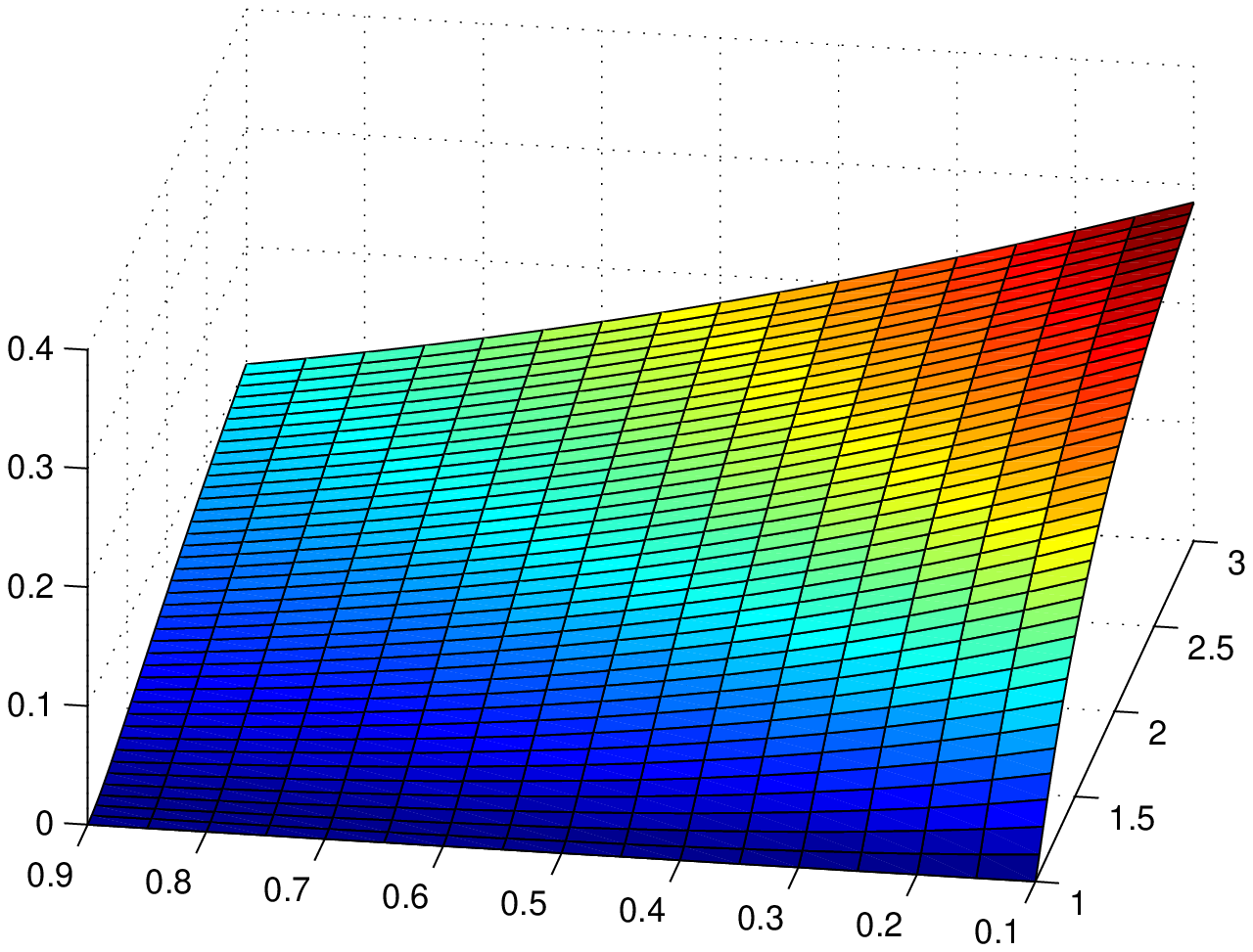}}
	\subfloat[ $ \Psi(x)= {x} $, $0.1\leq\mu\leq 0.9$.]
	{\label{Fig1b}\includegraphics[width=0.33\textwidth]{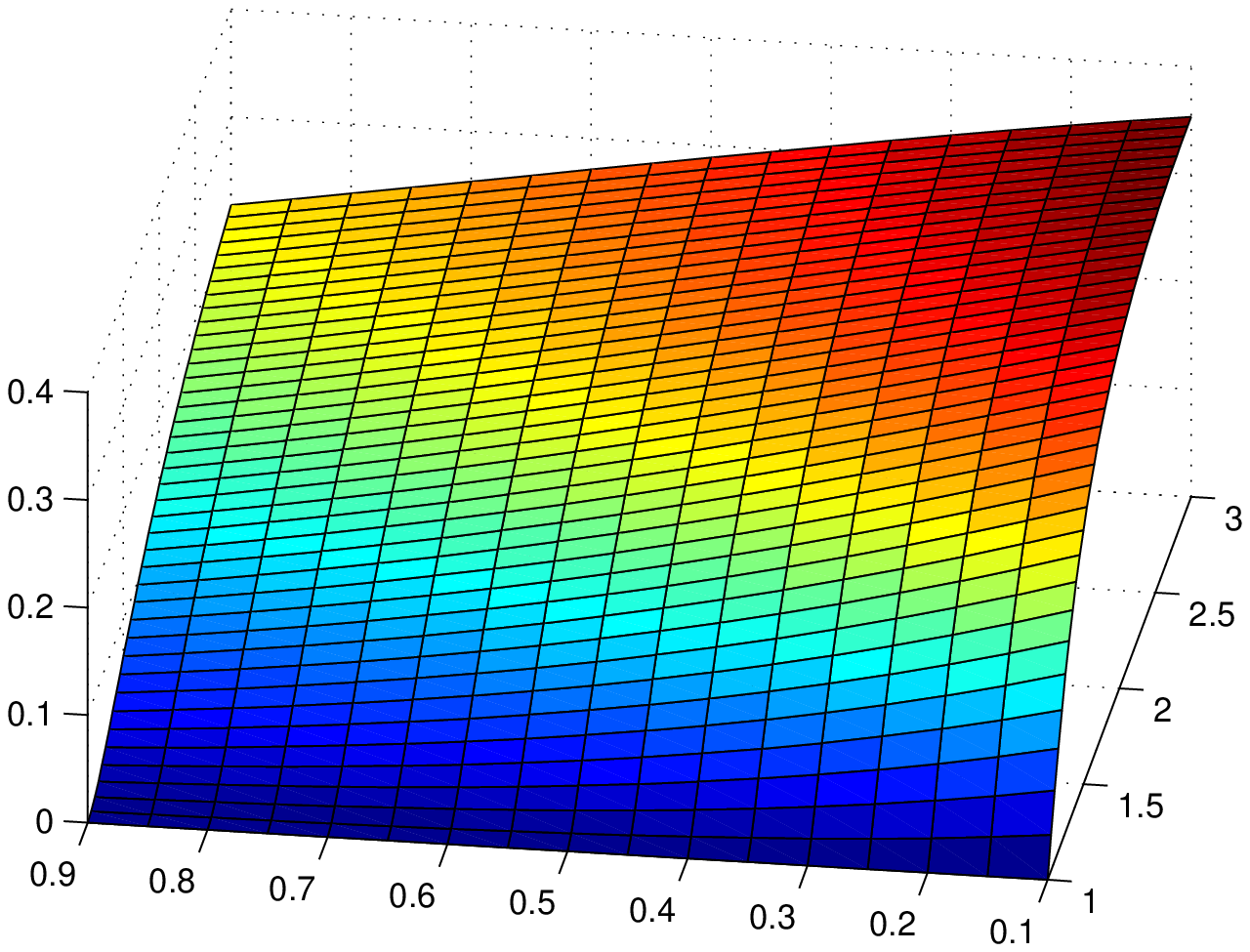}}
	\subfloat[ $ \Psi(x)= {x}^{2} $, $0.1\leq\mu\leq 0.9$.]
	{\label{Fig1c}\includegraphics[width=0.33\textwidth]{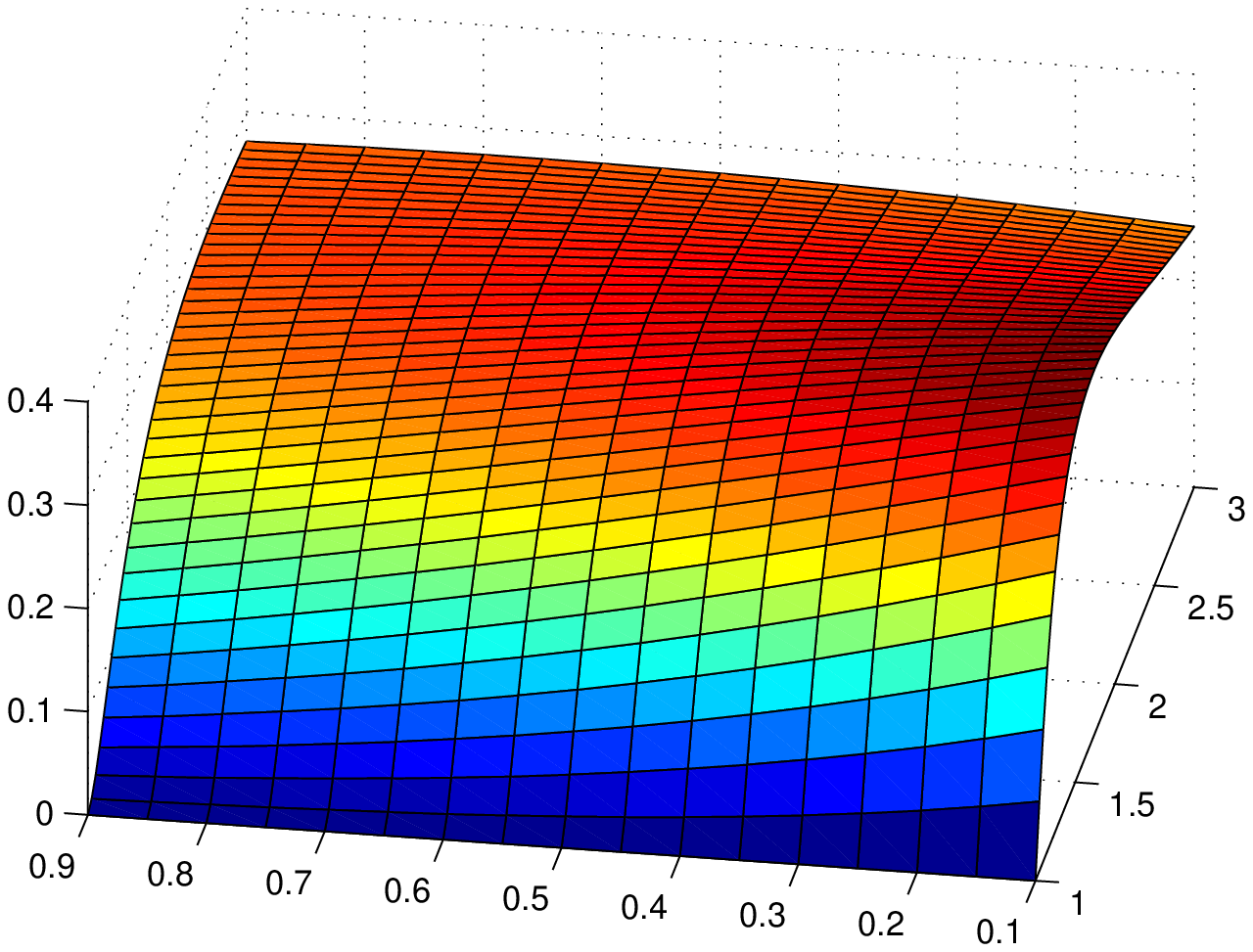}}
	\caption{Hadamard-type fractional integrals $\prescript{H}{1}I^{\mu,1}_{\Psi(x)}f(x)$ with respect to $\Psi(x)$ of the function $f(x)=\Psi(x)^{-1} \log\frac{\Psi(x)}{\Psi(1)}$, with different functions $\Psi$. }
	\label{Fig1}
\end{figure}
\begin{figure}[H]
	\centering
	\subfloat[ $ \Psi(x)= \sqrt{x} $, $0.1\leq\mu\leq 0.9$.]
	{\label{Fig2a}\includegraphics[width=0.33\textwidth]{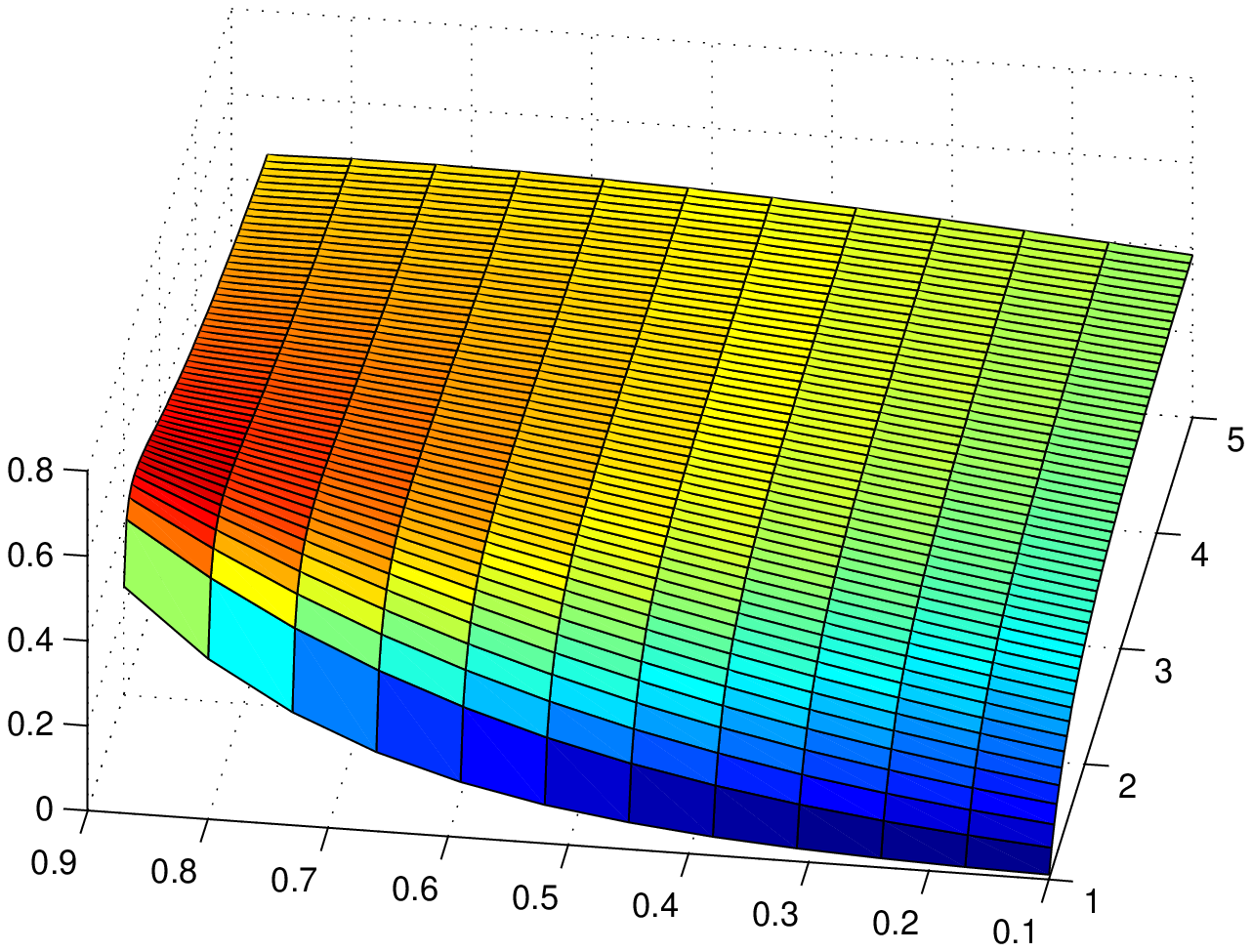}}
	\subfloat[ $ \Psi(x)= {x} $, $0.1\leq\mu\leq 0.9$.]
	{\label{Fig2b}\includegraphics[width=0.33\textwidth]{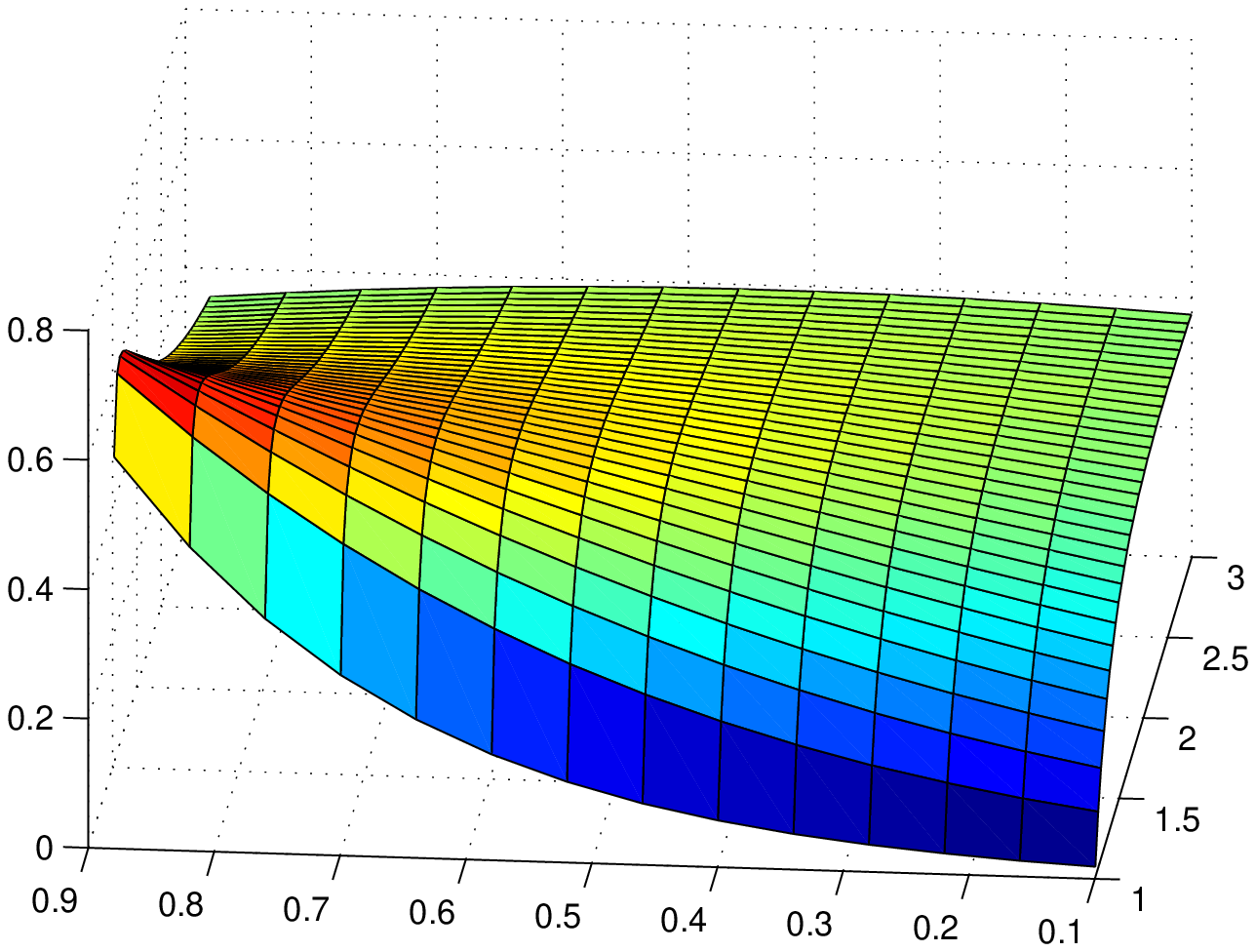}}
	\subfloat[ $ \Psi(x)= {x}^{2} $, $0.1\leq\mu\leq 0.9$.]
	{\label{Fig2c}\includegraphics[width=0.33\textwidth]{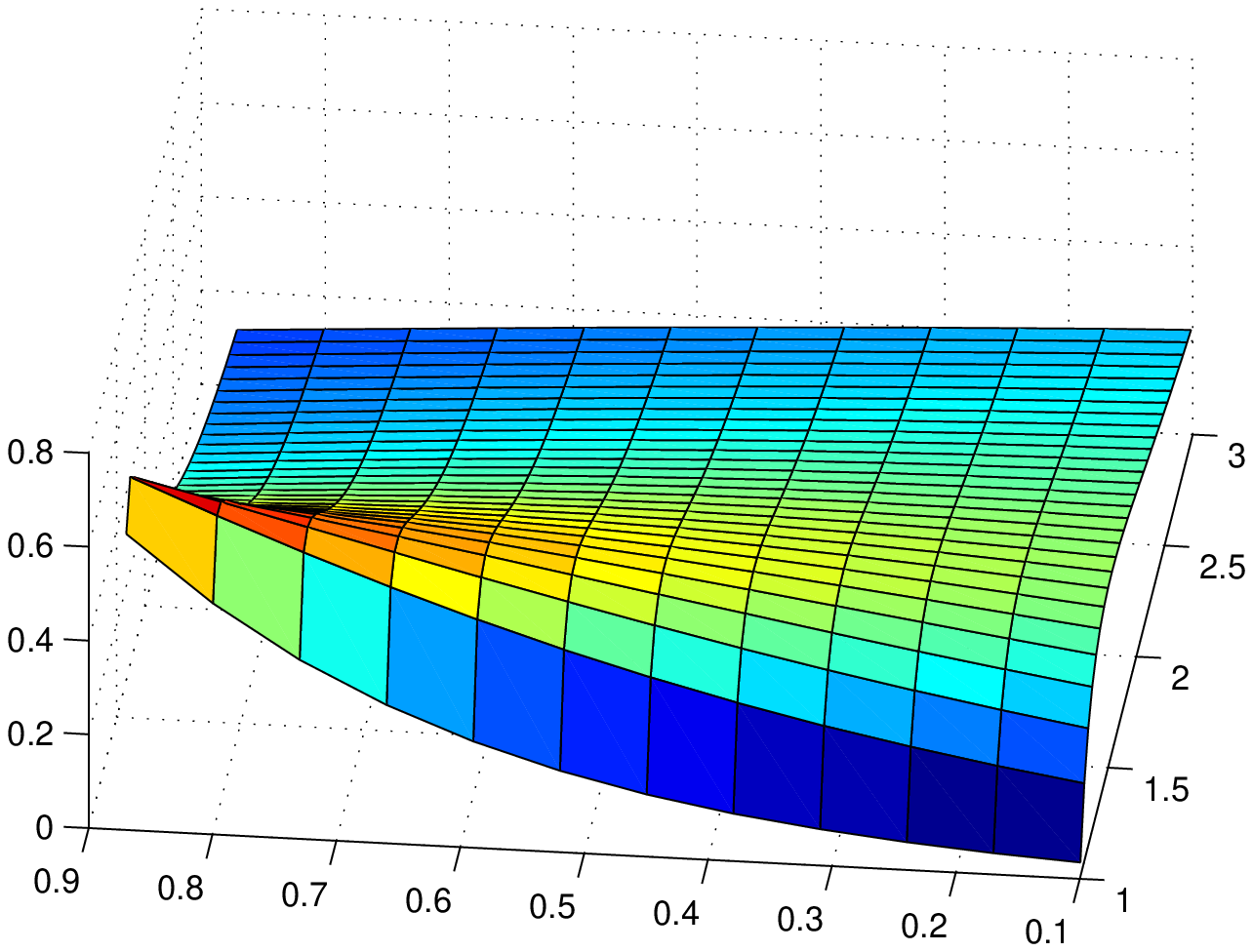}}
	\caption{Hadamard-type fractional derivatives $\prescript{HR}{1}D^{\mu,1}_{\Psi(x)}f(x)$ with respect to $\Psi(x)$ of the function $f(x)=\Psi(x)^{-1} \log\frac{\Psi(x)}{\Psi(1)}$, with different functions $\Psi$. }
	\label{Fig2}
\end{figure}

In Figure \ref{Fig1} and Figure \ref{Fig2}, we plot some example fractional integrals and derivatives, respectively, of the function from Proposition \ref{Prop:ourlem3}, in the case $\nu=2$, $a=1$, $s=1$, for a few example functions $\Psi(x)$. These graphs plot the expressions $\prescript{H}{a}I^{\mu,s}_{\Psi(x)}f(x)$ and $\prescript{HR}{a}D^{\mu,s}_{\Psi(x)}f(x)$ respectively against the independent variables $x$ and $\mu$, where in each case $f(x)=\Psi(x)^{-1} \log\frac{\Psi(x)}{\Psi(1)}$ and $\Psi$ is a power function as specified in the subfigure caption.

\section{Function spaces for Hadamard-type fractional calculus with respect to $\Psi(x)$} \label{Sec:fnspace}

\subsection{Function spaces for the fractional integral operator}

In this section, we discuss conditions under which the Hadamard-type fractional integral operator $\prescript{H}{a}I^{\mu,s}_{\Psi(x)}$ with respect to a function is bounded in a particular function space $X_{\Psi, c}^{p} (a, b)$ which we define, following \cite{Butzer} for Hadamard-type fractional calculus, as follows.

\begin{defn}
Let $c \in \R$ and $1 \leq p \leq \infty$. The space $X_{\Psi, c}^{p} (a, b)$ is defined to consist of all Lebesgue measurable functions $f:[a,b]\rightarrow\mathbb{C}$ for which $ \|f\|_{X_{\Psi, c}^{p}} < \infty$, where the norm is defined by
\begin{equation} \label{generalizednorm}
||f||_{X_{\Psi, c}^{p}} = \left( \int_{a}^{b} \left|\left\{\Psi(x)\right\}^{c} f(x)\right|^{p} \frac{\Psi^{\prime}(x)}{\Psi(x)} \,\mathrm{d}x \right)^{\frac{1}{p}}\ \ \ \text{for }c \in \R, 1 \leq p < \infty,
\end{equation}
and
\begin{equation} \label{normforpinfinity}
||f||_{X_{\Psi, c}^{\infty}} =\esssup_{a \leq x \leq b} \left(\left\{\Psi(x)\right\}^{c} \left|f(x)\right| \right), \ \ \ \text{for }c \in \R. 
\end{equation}
\end{defn}

If we consider $ c=\frac{1}{p} $ and $ \Psi(x)=x $, then the space $X_{\Psi, c}^{p} (a, b)$ coincides with the space $ L^{p}(a,b) $ with
\[
\|f\|_{p} = \left( \int_{a}^{b} \left|f(x)\right|^{p}  dx \right)^{\frac{1}{p}} \ \ \ \text{for }c \in \R,1 \leq p < \infty,
\]
and
\[
\|f\|_{\infty} =\esssup_{a \leq x \leq b} \left| f(x)\right|  \ \ \ \text{for }c \in \R.
\]

In the theorem below, we prove that, for a positive increasing function $ \Psi $ and for any $ s \geq c $, the Hadamard-type fractional integral operator of any positive order $\mu$ and parameter $s$ with respect to $\Psi$ is well-defined on the space $X_{\Psi, c}^{p} (a, b)$.

\begin{thm}\label{thm4.1}
	Let $\mu > 0$, $1 \leq p \leq \infty$, $a < b$ and $s\geq c$ in $\mathbb{R}$, and $\Psi$ be a positive increasing function. Then the operator $\prescript{H}{a}I^{\mu,s}_{\Psi(x)}$ is bounded in the space $X_{\Psi, c}^{p}(a, b)$ and
	\begin{equation}
	\left\| \prescript{H}{a}I^{\mu,s}_{\Psi(x)} f \right\|_{X_{\Psi, c}^{p}} \leq K \|f\|_{X_{\Psi, c}^{p}},
	\end{equation}
	where the constant $K$ is defined by
	\begin{equation} \label{Knorm}
	K =\begin{cases}
	\displaystyle\frac{1}{\Gamma(\mu +1)} \left(\log \frac{\Psi(b)}{\Psi(a)} \right)^{\mu} \ \ \ &\text{for } s = c, \\
	\displaystyle\frac{1}{\Gamma(\mu)} (s - c)^{-\mu} \gamma \left(\mu,  (s - c) \log  \frac{\Psi(b)}{\Psi(a)} \right) \ \ \ &\text{for } s> c.
	\end{cases}
	\end{equation}
\end{thm}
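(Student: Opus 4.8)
The plan is to reduce the weighted estimate to the classical boundedness of a convolution operator on $L^{p}$, by means of the logarithmic substitution that underlies Corollary \ref{Cor:ourconjug}. First I would set $A=\log\Psi(a)$ and $B=\log\Psi(b)$ and perform the change of variables $\sigma=\log\Psi(x)$, $\tau=\log\Psi(t)$, which is legitimate because $\Psi$ is positive, increasing and $C^{1}$ with $\Psi'\neq 0$, so that $\mathrm{d}\sigma=\frac{\Psi'(x)}{\Psi(x)}\,\mathrm{d}x$. Writing $\tilde f(\sigma)=f(\Psi^{-1}(e^{\sigma}))$ and $\phi(\sigma)=e^{c\sigma}\tilde f(\sigma)$, the weight in \eqref{generalizednorm} is exactly absorbed: one checks that $\|f\|_{X_{\Psi,c}^{p}}=\|\phi\|_{L^{p}(A,B)}$ for $1\leq p<\infty$, and likewise $\|f\|_{X_{\Psi,c}^{\infty}}=\|\phi\|_{L^{\infty}(A,B)}$, since the monotone change of variable preserves the essential supremum.

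Next I would transform the operator itself. Setting $h=\prescript{H}{a}I^{\mu,s}_{\Psi(x)}f$ and $\psi(\sigma)=e^{c\sigma}\tilde h(\sigma)$, a direct substitution in the integral of Definition \ref{defn:3}, together with the cancellation of the factors $e^{\pm c\tau}$, yields
\[
\psi(\sigma)=\frac{1}{\Gamma(\mu)}\int_{A}^{\sigma}e^{(c-s)(\sigma-\tau)}(\sigma-\tau)^{\mu-1}\phi(\tau)\,\mathrm{d}\tau,
\]
so that $\psi=k*\phi$ on $[A,B]$ with the causal kernel $k(r)=\frac{1}{\Gamma(\mu)}e^{(c-s)r}r^{\mu-1}$ for $r>0$ (and $0$ otherwise). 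Because $\phi$ is supported on $[A,B]$ and $\psi$ is evaluated there, only the values $r\in[0,B-A]$ of the kernel intervene, so I may replace $k$ by its restriction $k\,\mathbf{1}_{[0,B-A]}$ without changing $\psi$ on $[A,B]$; this restriction is essential, since for $s=c$ the kernel fails to be integrable on all of $(0,\infty)$. Young's convolution inequality then gives, uniformly for every $1\leq p\leq\infty$,
\[
\|h\|_{X_{\Psi,c}^{p}}=\|\psi\|_{L^{p}(A,B)}\leq\Big\|k\,\mathbf{1}_{[0,B-A]}\Big\|_{L^{1}}\,\|\phi\|_{L^{p}(A,B)}=\Big(\tfrac{1}{\Gamma(\mu)}\int_{0}^{B-A}e^{(c-s)r}r^{\mu-1}\,\mathrm{d}r\Big)\|f\|_{X_{\Psi,c}^{p}}.
\]

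Finally I would evaluate the constant. When $s=c$ the exponential is trivial and $\int_{0}^{B-A}r^{\mu-1}\,\mathrm{d}r=(B-A)^{\mu}/\mu$, giving $K=(B-A)^{\mu}/\Gamma(\mu+1)$ with $B-A=\log\frac{\Psi(b)}{\Psi(a)}$. When $s>c$ the substitution $t=(s-c)r$ turns the integral into $(s-c)^{-\mu}\int_{0}^{(s-c)(B-A)}e^{-t}t^{\mu-1}\,\mathrm{d}t=(s-c)^{-\mu}\gamma\big(\mu,(s-c)\log\frac{\Psi(b)}{\Psi(a)}\big)$, which is precisely \eqref{Knorm}; here the hypothesis $s\geq c$ is exactly what guarantees a nonincreasing exponential and hence the lower incomplete gamma. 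The main obstacle is bookkeeping rather than depth: one must verify carefully that the weighted norm collapses to a plain $L^{p}$ norm under the substitution and treat the endpoint $p=\infty$ uniformly, and one must invoke the kernel restriction to keep $\|k\|_{L^{1}}$ finite, the translation-invariance step in Young's inequality being otherwise routine.
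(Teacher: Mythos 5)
Your proof is correct, and it takes a genuinely different (though closely related) route from the paper's. The paper works directly in the weighted space: it substitutes $\Psi(t)=\Psi(x)/\Psi(u)$ inside the inner integral, applies the generalized Minkowski (integral Minkowski) inequality to pull the $X^p_{\Psi,c}$-norm inside, identifies the resulting constant $M$ as exactly the integral you call $\|k\mathbf{1}_{[0,B-A]}\|_{L^1}$, and then handles $p=\infty$ by a separate direct pointwise estimate. You instead conjugate the whole problem away first: the substitution $\sigma=\log\Psi(x)$ together with the weight $e^{c\sigma}$ is an isometry of $X^p_{\Psi,c}(a,b)$ onto $L^p(A,B)$, under which the operator becomes convolution with the causal kernel $k(r)=\tfrac{1}{\Gamma(\mu)}e^{(c-s)r}r^{\mu-1}$, and Young's inequality $\|k*\phi\|_{L^p}\leq\|k\|_{L^1}\|\phi\|_{L^p}$ finishes the argument. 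Since Young's inequality in the $L^1*L^p\to L^p$ case is itself a consequence of Minkowski's integral inequality, the two proofs rest on the same underlying estimate and produce the identical constant; what your version buys is that all exponents $1\leq p\leq\infty$ are treated uniformly in one stroke (no separate $p=\infty$ case), and the structural reason for the bound --- the operator is, up to an explicit isometry, a convolution --- is made transparent, which is very much in the spirit of the conjugation identities of Proposition \ref{Prop:ourconjug}. Your observation that the kernel must be truncated to $[0,B-A]$ is the right one: without it the $s=c$ case would fail outright and the $s>c$ case would give the full $\Gamma(\mu)$ rather than the lower incomplete gamma of \eqref{Knorm}. The only points worth writing out in full are the measure-preservation justifying $\|f\|_{X^{\infty}_{\Psi,c}}=\|\phi\|_{L^\infty(A,B)}$ (which holds because $\log\Psi$ is a $C^1$ diffeomorphism with nonvanishing derivative, so null sets correspond to null sets) and the zero-extension of $\phi$ to $\mathbb{R}$ before invoking Young; both are routine.
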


\begin{proof}
	First we discuss the case $1 \leq p < \infty$. Using the definition of $\prescript{H}{a}I^{\mu,s}_{\Psi(x)}$ and Eq. \eqref{generalizednorm}, we find
	\[
	\left\| \prescript{H}{a}I^{\mu,s}_{\Psi(x)} f\right\|_{X_{\Psi, c}^{p}} = \left( \int_{a}^{b} \Psi(x)^{cp} \left| \frac{1}{\Gamma(\mu)} \int_{a}^{x} \left(\frac{\Psi(t)}{\Psi(x)} \right)^{s} \left(\log  \frac{\Psi(x)}{\Psi(t)} \right)^{\mu - 1} f(t) \frac{\Psi'(t)}{\Psi(t)}\,\mathrm{d}t \right|^{p} \frac{\Psi'(x)}{\Psi(x)}\,\mathrm{d}x  \right)^{\frac{1}{p}}.
	\]
	Making the substitution $\Psi(t) = \frac{\Psi(x)}{\Psi(u)}$ in the inner integral, which gives $\frac{\Psi'(t)}{\Psi(t)}\,\mathrm{d}t=-\frac{\Psi'(u)}{\Psi(u)}\,\mathrm{d}u$, we get	
	\begin{align*}
	&\left\| \prescript{H}{a}I^{\mu,s}_{\Psi(x)} f\right\|_{X_{\Psi, c}^{p}} \\
	&\hspace{1cm}= \left( \int_{a}^{b} \left| \frac{\Psi(x)^c}{\Gamma(\mu)} \int_{\Psi^{-1} \left(\frac{\Psi(x)}{\Psi(a)}\right)}^{\Psi^{-1}(1)} \Psi(u)^{-s} \left(\log\Psi(u)\right)^{\mu - 1} f\left( \Psi^{-1} \left(\frac{\Psi(x)}{\Psi(u)}\right) \right) \left(-\frac{\Psi'(u)}{\Psi(u)}\right)\,\mathrm{d}u \right|^{p} \frac{\Psi'(x)}{\Psi(x)}\,\mathrm{d}x  \right)^{\frac{1}{p}} \\
	&\hspace{1cm}=  \left( \int_{a}^{b} \left| \int_{\Psi^{-1}(1)}^{\Psi^{-1} \left(\frac{\Psi(x)}{\Psi(a)}\right)} \frac{\Psi(u)^{-s}}{\Gamma(\mu)}  \left(\log \Psi(u)\right)^{\mu - 1} \frac{\Psi(x)^{c-\frac{1}{p}}}{\Psi'(x)^{-\frac{1}{p}}} f\left( \Psi^{-1} \left(\frac{\Psi(x)}{\Psi(u)}\right) \right) \frac{\Psi'(u)}{\Psi(u)}\,\mathrm{d}u  \right|^{p} \,\mathrm{d}x \right)^{\frac{1}{p}}.
	\end{align*}
	Since $ f(x) \in X_{\Psi, c}^{p}(a,b)$, it follows that $ \frac{\Psi(x)^{c-\frac{1}{p}}}{\Psi'(x)^{-\frac{1}{p}}} f(x) \in  L^{p}(a,b)$, and hence by application of the generalized Minkowski inequality, we have
	\begin{align*}
	\left\| \prescript{H}{a}I^{\mu,s}_{\Psi(x)} f\right\|_{X_{\Psi, c}^{p}} &\leq \frac{1}{\Gamma(\mu)} \int_{\Psi^{-1}(1)}^{\Psi^{-1} \left(\frac{\Psi(b)}{\Psi(a)}\right)}  \left( \int_{\Psi ^{-1} \left(\Psi(u) \Psi(a)\right)}^{b} \Psi(x)^{cp} \left| f\left( \Psi^{-1} \left(\frac{\Psi(x)}{\Psi(u)}\right) \right) \right|^{p} \frac{\Psi'(x)}{\Psi(x)}\,\mathrm{d}x \right)^{\frac{1}{p}}  \\
	&\hspace{3cm}\times \Psi(u)^{-s - 1} \left(\log \Psi(u)\right)^{\mu - 1} \Psi'(u) \,\mathrm{d}u \\
	&= \frac{1}{\Gamma(\mu)} \int_{\Psi^{-1}(1)}^{\Psi^{-1} \left(\frac{\Psi(b)}{\Psi(a)}\right)} \left( \int_{a}^{\Psi^{-1}\left(\frac{\Psi(b)}{\Psi(u)} \right)} \big| \Psi(t)^c f(t) \big|^p \frac{\Psi'(t)}{\Psi(t)} \,\mathrm{d}t \right)^{\frac{1}{p}} \Psi(u)^{c -s - 1} \left(\log \Psi(u)\right)^{\mu - 1} \Psi'(u) \,\mathrm{d}u,
	\end{align*}
	and hence
	\begin{equation*}
	\left\| \prescript{H}{a}I^{\mu,s}_{\Psi(x)} f\right\|_{X_{\Psi, c}^{p}} \leq M \|f\|_{X_{\Psi, c}^{p}}
	\end{equation*}
	where
	\begin{equation} \label{expresionforM}
	M := \frac{1}{\Gamma(\mu)} \int_{\Psi^{-1}(1)}^{\Psi^{-1} \left(\frac{\Psi(b)}{\Psi(a)}\right)}\Psi(u)^{c -s - 1} \left(\log \Psi(u)\right)^{\mu - 1} \Psi'(u) \,\mathrm{d}u.
	\end{equation}
	When $s = c$, then we have
	\begin{equation}
	M = \frac{1}{\Gamma(\mu)} \int_{\Psi^{-1}(1)}^{\Psi^{-1}\left(\frac{\Psi(b)}{\Psi(a)}\right)} \{\log \Psi(u)\}^{\mu - 1} \frac{\Psi'(u)}{\Psi(u)}\,\mathrm{d}u= \frac{1}{\Gamma(\mu + 1)} \left(\log \frac{\Psi(b)}{\Psi(a)} \right)^{\mu}. \nonumber
	\end{equation}
	If $s > c$, then making the substitution $t = (s - c) \log \Psi(u)$ in Eq. \eqref{expresionforM} and by making use of the definition of the incomplete Gamma function, we have
	\begin{equation}
	M=\frac{(s - c)^{-\mu}}{\Gamma(\mu)} \int_{0}^{(s - c)\log\left(\frac{\Psi(b)}{\Psi(a)} \right)} e^{-t} t^{\mu - 1} dt = \frac{(s - c)^{-\mu} }{\Gamma(\mu)}\gamma \left(\mu,  (s - c) \log  \frac{\Psi(b)}{\Psi(a)} \right). \nonumber
	\end{equation}
	Thus the result is proved for $ 1 \leq p < \infty $.

	Now assume that $p = \infty$. Then by using the definition of $\prescript{H}{a}I^{\mu,s}_{\Psi(x)}$ and Eq. \eqref{normforpinfinity}, we have
	\begin{align}
	\left| \Psi(x)^c \prescript{H}{a}I^{\mu,s}_{\Psi(x)} f(x)\right| &= \frac{1}{\Gamma(\mu)} \int_{a}^{x} \left(\frac{\Psi(t)}{\Psi(x)} \right)^{s - c}  \left(\log \frac{\Psi(x)}{\Psi(t)}\right)^{\mu - 1} \big| \Psi(t)^c f(t)\big| \frac{\Psi'(t)}{\Psi(t)}\,\mathrm{d}t \nonumber \\
	&\leq K(x) \|f\|_{X_{\Psi, c}^{\infty}},\label{eq6.7}
	\end{align}
	where the function $K$ is defined by
	\begin{equation*}
	K(x) :=  \frac{1}{\Gamma(\mu)} \int_{a}^{x} \left(\frac{\Psi(t)}{\Psi(x)} \right)^{s - c}  \left(\log \frac{\Psi(x)}{\Psi(t)}\right)^{\mu - 1} \frac{\Psi^{\prime}(t)dt}{\Psi(t)}.
	\end{equation*}
	Substituting $\Psi(u) = \frac{\Psi(x)}{\Psi(t)}$ in the above equation, we get
	\begin{equation*}
	K(x) =  \frac{1}{\Gamma(\mu)} \int_{\Psi^{-1}(1)}^{\Psi^{-1}\left(\frac{\Psi(x)}{\Psi(a)}\right)} \Psi(u)^{c - s}  \left(\log\Psi(u)\right)^{\mu - 1} \frac{\Psi'(u)}{\Psi(u)}\,\mathrm{d}u.
	\end{equation*}
	We note that the integrand is always positive, since $\Psi$ is a positive increasing function and $\Psi(u)\geq1$. So, comparing this integral with the definition \eqref{expresionforM} of $M$ in the previous part of the proof, we see that $K(x)\leq M$, and the result follows for $  p=\infty $.
\end{proof}

\begin{rem}
We can compare Theorem \ref{thm4.1} with the analogous result for the classical Riemann-Liouville fractional integral with respect to a function \cite[Theorem 18.1]{Samko}. In the Riemann--Liouville context, the function space required for $\prescript{RL}{a}I^{\mu}_{\Psi(x)}$ is the same as that required for $\prescript{RL}{a}I^{\mu}_x$. The extra parameter and power function in the Hadamard-type context means that here we should define the new function space $X_{\Psi, c}^{p}(a, b)$.

In the case that $ \Psi(x)=x $, we have an analogous conclusion for the original Hadamard-type fractional integral operator. Considering $ \Psi(x)=x $ and $ s=0 $ gives a corresponding result for the basic Hadamard fractional integral operator. See also \cite{Butzer,3} for these results.

In the case that $\Psi(x)=e^x$, we have an analogous conclusion for the tempered fractional integral operator. But in this case the result is almost trivial, since $\log\frac{\Psi(x)}{\Psi(t)}$ is simply $(x-t)$ and $\frac{\Psi(x)}{\Psi'(x)}$ reduces to $1$.
\end{rem}

\begin{thm} \label{Thm:reviewer}
	Let $ \Psi \in C^1[a,b]$ be a positive increasing function such that $\Psi(b) \leq e$. If $\mu,s>0$ and $p >\max\left(\frac{1}{\mu},1\right)$, then the map $\prescript{H}{a}I^{\mu,s}_{\Psi(x)} : L^{p}[a,b]\rightarrow C[a,b]$ is a compact operator (if we define $\prescript{H}{a}I^{\mu,s}_{\Psi(x)}f(a) := 0$). In particular, if $\mu\in (0,1]$, then $\prescript{H}{a}I^{\mu,s}_{\Psi(x)}:L^{p}[a,b]\rightarrow H^{\mu - \frac{1}{p}}[a,b]$ is a compact operator, were $H^{\mu - \frac{1}{p}}[a,b]$ denotes the H\"{o}lder space of order $\mu - \frac{1}{p} > 0$.
\end{thm}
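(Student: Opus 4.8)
The plan is to avoid any direct estimation of the Hadamard-type kernel and instead reduce everything to the classical mapping properties of the Riemann--Liouville fractional integral, exploiting the conjugation relation of Proposition \ref{Prop:ourconjug}. Writing $\alpha=\log\Psi(a)$ and $\beta=\log\Psi(b)$, that proposition factors the operator as
\[
\prescript{H}{a}I^{\mu,s}_{\Psi(x)}=\big(Q_{\log\Psi(x)}\circ M_{e^{sx}}^{-1}\big)\circ\big(\prescript{}{\alpha}I^{\mu}_x\big)\circ\big(M_{e^{sx}}\circ Q_{\log\Psi(x)}^{-1}\big),
\]
so that the whole map is the classical operator $\prescript{}{\alpha}I^{\mu}_x$ acting on the interval $[\alpha,\beta]$, sandwiched between elementary conjugating factors. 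Since a composition $A\circ K\circ B$ of bounded operators $A,B$ with a compact operator $K$ is itself compact, it suffices to show that $\prescript{}{\alpha}I^{\mu}_x$ is the compact link and that the outer factors are bounded on the function spaces in play.

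First I would verify boundedness of the conjugating factors. The substitution $y=\log\Psi(x)$ is a $C^1$ diffeomorphism of $[a,b]$ onto $[\alpha,\beta]$ whose derivative $\Psi'(x)/\Psi(x)$ is bounded above and below by positive constants, because $\Psi$ is positive, increasing and $C^1$ on a compact interval; hence $Q_{\log\Psi(x)}^{-1}$ is a bounded bijection $L^p[a,b]\to L^p[\alpha,\beta]$, while $Q_{\log\Psi(x)}$ is bounded $C[\alpha,\beta]\to C[a,b]$ and, since $\log\Psi$ is Lipschitz, also $H^{\lambda}[\alpha,\beta]\to H^{\lambda}[a,b]$ for every $\lambda\in(0,1)$. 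The operators $M_{e^{sx}}^{\pm1}$ are multiplication by functions smooth and bounded away from $0$ and $\infty$ on $[a,b]$, hence bounded on each of $L^p$, $C$ and $H^{\lambda}$. Thus both outer factors transport boundedness, and compactness, without loss.

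Next I would invoke the classical theorem on $\prescript{}{\alpha}I^{\mu}_x$ (see \cite{Samko}). For $p>\max(1/\mu,1)$ one has $q(\mu-1)>-1$ with $1/p+1/q=1$, so H\"older's inequality shows that $\prescript{}{\alpha}I^{\mu}_x$ sends $L^p[\alpha,\beta]$ into $C[\alpha,\beta]$ with uniformly bounded and equicontinuous image; Arzel\`a--Ascoli then yields compactness of $\prescript{}{\alpha}I^{\mu}_x:L^p[\alpha,\beta]\to C[\alpha,\beta]$, and composing with the bounded outer factors gives the first assertion, the vanishing at the left endpoint being exactly the normalisation $\prescript{H}{a}I^{\mu,s}_{\Psi(x)}f(a):=0$. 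For the refinement with $\mu\in(0,1]$ the exponent $\mu-\tfrac1p$ lies in $(0,1)$, and the same classical theorem gives that $\prescript{}{\alpha}I^{\mu}_x$ is bounded from $L^p[\alpha,\beta]$ into $H^{\mu-1/p}[\alpha,\beta]$; composing with the H\"older-preserving factors shows $\prescript{H}{a}I^{\mu,s}_{\Psi(x)}$ is bounded from $L^p[a,b]$ into $H^{\mu-1/p}[a,b]$, and the stated compactness into $C[a,b]$ then follows from the compact embedding $H^{\mu-1/p}[a,b]\hookrightarrow C[a,b]$.

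The main obstacle is really the classical equicontinuity and H\"older estimate for $\prescript{}{\alpha}I^{\mu}_x$, namely controlling $\big|\prescript{}{\alpha}I^{\mu}_x g(y_2)-\prescript{}{\alpha}I^{\mu}_x g(y_1)\big|$ by a constant times $\|g\|_{L^p}\,|y_2-y_1|^{\mu-1/p}$ via the usual splitting of the integral over $[\alpha,y_1]$ and $[y_1,y_2]$. It is worth remarking that this conjugation argument does not itself require the hypothesis $\Psi(b)\le e$: that condition is the natural normalisation arising in a hands-on treatment of the Hadamard kernel, where one wants each logarithmic length $\log\frac{\Psi(x)}{\Psi(t)}$ to stay below $1$ so that its higher powers are dominated by its $(\mu-1/p)$th power. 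One should finally record that $\mu\le1$ forces $\mu-1/p<1$ strictly, so the target H\"older exponent is admissible and the exponent transfer under the change of variables causes no boundary difficulty.
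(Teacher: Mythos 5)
Your proposal is correct, but it takes a genuinely different route from the paper. The paper's proof works hands-on with the Hadamard-type kernel: it applies H\"older's inequality to $\big|\Psi(x_2)^s\prescript{H}{a}I^{\mu,s}_{\Psi(x)}f(x_2)-\Psi(x_1)^s\prescript{H}{a}I^{\mu,s}_{\Psi(x)}f(x_1)\big|$, splits the integral over $[a,x_1]$ and $[x_1,x_2]$, and uses the normalisation $\Psi(b)\leq e$ (so that $0<\log\frac{\Psi(x)}{\Psi(t)}<1$, enabling $|x-y|^{\lambda}\leq|x^{\lambda}-y^{\lambda}|$ for $\lambda=q\geq1$) to obtain an explicit H\"older modulus in all three cases $\mu<1$, $\mu=1$, $\mu>1$, before invoking Arzel\`a--Ascoli. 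You instead exploit the conjugation of Proposition \ref{Prop:ourconjug} (equivalently Corollary \ref{Cor:ourconjug}) to reduce everything to the classical Hardy--Littlewood mapping theorem for $\prescript{}{\alpha}I^{\mu}_x$ on $[\log\Psi(a),\log\Psi(b)]$, checking that the conjugating factors preserve boundedness on $L^p$, $C$ and $H^{\lambda}$. Your approach is shorter, thematically consonant with the paper's structural results, and correctly exposes that $\Psi(b)\leq e$ is an artefact of the direct kernel estimate rather than a genuine hypothesis; the paper's approach is self-contained and yields the explicit constants of \eqref{combinationofresults}, which may matter in applications. Two small points to tighten: the lower bound on $\Psi'$ (hence on $\Psi'/\Psi$) comes from the standing assumption $\Psi'(x)\neq0$ in Definition \ref{defn:3}, not from ``positive increasing $C^1$'' alone --- though in fact your argument only needs the upper bound $\Psi'/\Psi\leq\|\Psi'\|_{\infty}/\Psi(a)$ for the change of variables $L^p[a,b]\to L^p[\alpha,\beta]$, since the return journey happens in $C$ and $H^{\lambda}$ where no Jacobian appears; and for $\mu>1$ you should note (as you implicitly do) that the compactness $L^p\to C$ follows either from the factorisation $\prescript{}{\alpha}I^{\mu}_x=\prescript{}{\alpha}I^{1}_x\circ\prescript{}{\alpha}I^{\mu-1}_x$ or from the same H\"older-inequality equicontinuity estimate, since the classical H\"older-space statement is usually quoted only for $0<\mu<1$.
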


\begin{proof}
We know $p>1$ has a conjugate exponent $q \in [1, \infty]$. Since $p>\frac{1}{\mu}$, we have $q(\mu -1) > -1$. For any $x_{1},x_{2} \in [a,b]$ with $x_{1} < x_{2}$, an explicit calculation using the H\"{o}lder inequality reveals that
	\begin{align*}
	\left| \Psi(x_2)^s\prescript{H}{a}I^{\mu,s}_{\Psi(x)} f(x_{2}) - \Psi(x_1)^s\prescript{H}{a}I^{\mu,s}_{\Psi(x)} f(x_{1}) \right| \Gamma(\mu) &{\color{white}=}\\ 
	&\hspace{-6cm}= \left| \int_{a}^{x_{2}} {\Psi(t)}^{s} \left(\log \frac{\Psi(x_{2})}{\Psi(t)}\right)^ {\mu - 1} \frac{\Psi '(t)}{\Psi(t)}f(t)\,\mathrm{d}t - \int_{a}^{x_{1}} {\Psi(t)}^{s} \left(\log \frac{\Psi(x_{1})}{\Psi(t)}\right)^ {\mu - 1} \frac{\Psi '(t)}{\Psi(t)}f(t)\,\mathrm{d}t \right| \\
	&\hspace{-6cm}\leq \int_{a}^{x_{1}} \left| {\Psi(t)}^{s} \left(\log \frac{\Psi(x_{2})}{\Psi(t)}\right)^ {\mu - 1}  -  {\Psi(t)}^{s} \left(\log \frac{\Psi(x_{1})}{\Psi(t)}\right)^ {\mu - 1}\right| \left| \frac{\Psi '(t)}{\Psi(t)}f(t) \right|\,\mathrm{d}t \\ & + \int_{x_{1}}^{x_{2}} \left|  {\Psi(t)}^{s} \left(\log \frac{\Psi(x_{2})}{\Psi(t)}\right)^ {\mu - 1} \right| \left| \frac{\Psi '(t)}{\Psi(t)}f(t) \right|\,\mathrm{d}t \\
	&\hspace{-6cm}\leq \|f\|_{L_{p}[a,b]} \left( \int_{a}^{x_{1}} \left|  {\Psi(t)}^{s} \left(\log \frac{\Psi(x_{2})}{\Psi(t)}\right)^ {\mu - 1}  -  {\Psi(t)}^{s} \left(\log \frac{\Psi(x_{1})}{\Psi(t)}\right)^ {\mu - 1}\right|^{q}  \left(\frac{\Psi '(t)}{\Psi(t)}\right)^{q} \,\mathrm{d}t\right)^{\frac{1}{q}} \\ &\hspace{-2cm} + \|f\|_{L_{p}[a,b]} \left( \int_{x_{1}}^{x_{2}} \left|  {\Psi(t)}^{s} \left(\log \frac{\Psi(x_{2})}{\Psi(t)}\right)^ {\mu - 1} \left(\frac{\Psi '(t)}{\Psi(t)}\right)\right|^{q} \,\mathrm{d}t \right)^{\frac{1}{q}} \\
	&\hspace{-6cm}\leq \|\Psi\|_{\infty}^{s}\|f\|_{L_{p}[a,b]} \left(\frac{\|\Psi'\|_{\infty}}{\Psi(a)}\right)^{\frac{1}{p}}\Bigg[\left( \int_{a}^{x_{1}} \left|  \left(\log \frac{\Psi(x_{2})}{\Psi(t)}\right)^ {\mu - 1}  -   \left(\log \frac{\Psi(x_{1})}{\Psi(t)}\right)^ {\mu - 1}\right|^{q}  \frac{\Psi '(t)}{\Psi(t)}\,\mathrm{d}t\right)^{\frac{1}{q}} \\ & +  \left( \int_{x_{1}}^{x_{2}} \left|   \left(\log \frac{\Psi(x_{2})}{\Psi(t)}\right)^ {\mu - 1} \right|^{q} \frac{\Psi '(t)}{\Psi(t)}\,\mathrm{d}t \right)^{\frac{1}{q}} \Bigg].
	\end{align*}
	Thus, since $0 < \log\frac{\Psi(x)}{\Psi(t)} < 1$ for $x \in [a,b]$ and $t \leq x$, and $|x - y|^{\lambda} \leq |x^{\lambda} - y^{\lambda}|$ for $\lambda \geq 1$, it follows that
	\begin{equation*}
	\begin{split}
J_{1} &:= \left( \int_{a}^{x_{1}} \left|  \left(\log \frac{\Psi(x_{2})}{\Psi(t)}\right)^ {\mu - 1}  -   \left(\log \frac{\Psi(x_{1})}{\Psi(t)}\right)^ {\mu - 1}\right|^{q}  \frac{\Psi '(t)}{\Psi(t)}\,\mathrm{d}t\right)^{\frac{1}{q}} \\&\leq \left( \int_{a}^{x_{1}} \left|  \left(\log \frac{\Psi(x_{2})}{\Psi(t)}\right)^ {q(\mu - 1)}  -   \left(\log \frac{\Psi(x_{1})}{\Psi(t)}\right)^ {q(\mu - 1)}\right|  \frac{\Psi '(t)}{\Psi(t)}\,\mathrm{d}t\right)^{\frac{1}{q}} \\&= \frac{1}{\sqrt[q]{q(\mu - 1) + 1}} 
\begin{cases}
 \left(\log \frac{\Psi(x_{2})}{\Psi(x_{1})}\right)^ {\mu - \frac{1}{p}}  &\text{if } \mu < 1, \\
 0   &\text{if } \mu = 1, 
 \\
   \left[   \left(\log \frac{\Psi(x_{2})}{\Psi(a)}\right)^ {q(\mu - 1)+1}  -   \left(\log \frac{\Psi(x_{1})}{\Psi(a)}\right)^ {q(\mu - 1)+1} \right]^{\frac{1}{q}} &\text{if } \mu > 1. 
 \end{cases}
	\end{split}
	\end{equation*}
	Also with some further efforts one can get
	\begin{equation*}
	J_{2}:= \left( \int_{x_{1}}^{x_{2}}    \left(\log \frac{\Psi(x_{2})}{\Psi(t)}\right)^ {q(\mu - 1)} \frac{\Psi '(t)}{\Psi(t)}\,\mathrm{d}t \right)^{\frac{1}{q}} = \frac{1}{\sqrt[q]{q(\mu - 1) + 1}} \left(\log \frac{\Psi(x_{2})}{\Psi(x_{1})}\right)^ {\mu - \frac{1}{p}}.
	\end{equation*}
	A combination of these results yields
	\begin{align} \label{combinationofresults}
	\begin{split}
		\left| \Psi(x_2)^s\prescript{H}{a}I^{\mu,s}_{\Psi(x)} f(x_{2}) - \Psi(x_1)^s\prescript{H}{a}I^{\mu,s}_{\Psi(x)} f(x_{1}) \right| &\leq \frac{ \Psi(b)^{s}\|f\|_{L_{p}[a,b]} \left(\frac{\|\Psi'\|}{\Psi(a)}\right)^{\frac{1}{p}}}{\Gamma(\mu) {\sqrt[q]{q(\mu - 1) + 1}}} \\&\hspace{-5cm} \times\begin{cases}
		2\left(\log \frac{\Psi(x_{2})}{\Psi(x_{1})}\right)^ {\mu - \frac{1}{p}}  &\text{if } \mu < 1, \\
		\left(\log \frac{\Psi(x_{2})}{\Psi(x_{1})}\right)^ {\mu - \frac{1}{p}}    &\text{if } \mu = 1, 
		\\
		\left(\log \frac{\Psi(x_{2})}{\Psi(x_{1})}\right)^ {\mu - \frac{1}{p}}  + \left[   \left(\log \frac{\Psi(x_{2})}{\Psi(a)}\right)^ {q(\mu - 1)+1}  -   \left(\log \frac{\Psi(x_{1})}{\Psi(a)}\right)^ {q(\mu - 1)+1} \right]^{\frac{1}{q}} &\text{if } \mu > 1. 
		\end{cases}
			\end{split}
	\end{align}
	The above estimates yield 
	\begin{equation*}
	\Big| \Psi(x_2)^s\prescript{H}{a}I^{\mu,s}_{\Psi(x)} f(x_{2}) - \Psi(x_1)^s\prescript{H}{a}I^{\mu,s}_{\Psi(x)} f(x_{1}) \Big| \rightarrow 0 \ \  \text{ as } \  x_{2} \rightarrow x_{1}.
	\end{equation*}
	Further, in view of our definition $\prescript{H}{a}{\mathfrak{I}}^{\mu,s}_{\Psi(x)}f(a) := 0$, we observe that
	\begin{align}
	\| \prescript{H}{a}I^{\mu,s}_{\Psi(x)} f\|_{\infty} &= \sup_{x \in [a,b]} \left| \prescript{H}{a}I^{\mu,s}_{\Psi(x)} f(x) - \prescript{H}{a}I^{\mu,s}_{\Psi(x)} f(a) \right| \leq \Psi(a)^{-s}\sup_{x \in [a,b]} \left| \Psi(x)^s\prescript{H}{a}I^{\mu,s}_{\Psi(x)} f(x) - \Psi(a)^s\prescript{H}{a}I^{\mu,s}_{\Psi(x)} f(a) \right| \nonumber \\
	&\leq \Psi(a)^{-s} \frac{2 \Psi(b)^{s}\|f\|_{L_{p}[a,b]} \left(\frac{\|\Psi'\|}{\Psi(a)}\right)^{\frac{1}{p}}}{\Gamma(\mu) {\sqrt[q]{q(\mu - 1) + 1}}} \left(\log\frac{\Psi(b)}{\Psi(a)}\right)^ {\mu - \frac{1}{p}}, \label{norm}
	\end{align}
	this result being valid for any $f \in L^{p}[a,b]$. Therefore, since $\Psi \in C[a,b]$, it follows from \eqref{combinationofresults} that the function $\prescript{H}{a}I^{\mu,s}_{\Psi(x)}f(\cdot)$ is continuous on $ [a,b] $ for any $f\in L^{p}[a,b]$. This points out the fact that the operator $\prescript{H}{a}I^{\mu,s}_{\Psi(x)}$ maps $L^{p}[a,b]$ into $C[a,b]$, given the starting assumption of $p>\max\left(\frac{1}{\mu},1\right)$. In particular, when $\mu \in (0, 1]$, since $\Psi \in C^{1}[a,b] \subset H^{\mu - \frac{1}{p}}[a,b]$, it can be easily shown that $\prescript{H}{a}I^{\mu,s}_{\Psi(x)}$ maps $L^{p}[a,b]$ into the H\"{o}lder space $H^{\mu - \frac{1}{p}}[a,b]$.
	
	Now the continuity of the linear operator $\prescript{H}{a}I^{\mu,s}_{\Psi(x)}$ follows from \eqref{norm}. From \eqref{combinationofresults} the equicontinuity of the image of bounded sets of $L^{p}[a,b]$ under  $\prescript{H}{a}I^{\mu,s}_{\Psi(x)}$ is obvious, and the uniform boundedness of this image follows from \eqref{norm}. By the Arzel\`a--Ascoli theorem, it follows that the map $\prescript{H}{a}I^{\mu,s}_{\Psi(x)} : L^{p}[a,b]\rightarrow C[a,b]$ is compact, which is what we wished to show.
\end{proof}

\subsection{Function spaces for the fractional derivative operator}

In this section, we discuss sufficient conditions for the existence of the Hadamard-type fractional derivative operator $\prescript{HR}{a}D^{\mu,s}_{\Psi(x)}$ with respect to a function. To do this, we define the following function space for $s\in\mathbb{R}$, $a<b$ in $\mathbb{R}$, and $\Psi$ as usual a positive increasing function:
\begin{equation}\label{space}
AC_{\delta^{\Psi}, s}^{n} [a,b] := \left\{ h: [a, b] \rightarrow \mathbb{C} : \left(\frac{\Psi(x)}{\Psi'(x)}\frac{\mathrm{d}}{\mathrm{d}x}\right)^{n-1} \Big[\Psi(x)^s h(x)\Big] \in AC [a, b] \right\}
\end{equation}
where $AC [a, b]$ is the set of absolutely continuous functions on $[a, b]$, which coincides with the space of primitives of Lebesgue measurable functions, i.e. \cite{Samko}:
\begin{equation} \label{primitivecondition}
h(x) \in AC [a, b] \iff \left(h(x) = \int_{a}^{x} \phi(t) dt + c \quad\text{ for some constant $c$ and }\phi \in L^1(a, b)\right).
\end{equation}
In the following theorem, we characterize the space $ AC_{\delta^{\Psi}, s}^{n} [a,b] $.

\begin{thm}\label{ourthm4}
	The space $AC_{\delta^{\Psi}, s}^{n} [a,b]$ consists of those and only those functions $g(x)$ that are represented in the form
	\begin{equation} \label{form}
	g(x) = \Psi(x)^{-s} \left[ \frac{1}{(n - 1)!} \int_{a}^{x} \left(\log \frac{\Psi(x)}{\Psi(t)}\right)^{n - 1} \phi(t) \,\mathrm{d}t + \sum_{k=0}^{n-1} c_k \left(\log \frac{\Psi(x)}{\Psi(a)}\right)^k \right],
	\end{equation}
for some constants $c_0,c_1,\dots,c_{n-1}$ and function $\phi\in L^1(a,b)$.
\end{thm}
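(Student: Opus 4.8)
The plan is to transport the statement, through the conjugation operators used above, to the classical characterisation of the space $AC^n$ of functions whose $(n-1)$th derivative is absolutely continuous. Write $G:=\log\circ\,\Psi$, a $C^1$ diffeomorphism of $[a,b]$ onto $[G(a),G(b)]$: since $\Psi$ is positive and increasing with $\Psi'$ nonvanishing, its derivative $G'=\Psi'/\Psi$ is continuous, strictly positive, and bounded away from $0$ on the compact interval, so that both $G$ and $G^{-1}$ are Lipschitz. The operator appearing in \eqref{space} is exactly differentiation with respect to $G$, namely $\frac{\Psi(x)}{\Psi'(x)}\frac{\mathrm{d}}{\mathrm{d}x}=\frac{\mathrm{d}}{\mathrm{d}G(x)}$, and the conjugation principle behind \eqref{FwrtF:conjug}, applied at first order (i.e. the chain rule), reads $\frac{\mathrm{d}}{\mathrm{d}G(x)}=Q_G\circ\frac{\mathrm{d}}{\mathrm{d}y}\circ Q_G^{-1}$, where $(Q_G\varphi)(x)=\varphi(G(x))$.

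First I would peel off the weight and the substitution simultaneously. Given a candidate function $g$, set $\tilde g:=Q_G^{-1}\big[\Psi^s g\big]$, so that $\tilde g(y)=\Psi\big(G^{-1}(y)\big)^s\,g\big(G^{-1}(y)\big)$ on $[G(a),G(b)]$ and $\Psi^s g=Q_G\tilde g$. Iterating the conjugation identity $n-1$ times (the inner $Q_G^{-1}Q_G$ telescoping away) gives
\[
\Big(\tfrac{\Psi}{\Psi'}\tfrac{\mathrm{d}}{\mathrm{d}x}\Big)^{n-1}\big[\Psi(x)^s g(x)\big]=\big(Q_G\,\tilde g^{\,(n-1)}\big)(x)=\tilde g^{\,(n-1)}\big(G(x)\big).
\]
Therefore $g\in AC_{\delta^{\Psi},s}^{n}[a,b]$ if and only if $\tilde g^{\,(n-1)}\circ G\in AC[a,b]$, and the decisive observation is that this is in turn equivalent to $\tilde g^{\,(n-1)}\in AC[G(a),G(b)]$, i.e. to $\tilde g\in AC^n[G(a),G(b)]$. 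This equivalence is exactly where the hypotheses on $\Psi$ enter, since pre- and post-composition with the bi-Lipschitz maps $G$ and $G^{-1}$ preserve absolute continuity in both directions.

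Next I would invoke the standard representation (see \cite{Samko,Kilbas}) that $\tilde g\in AC^n[G(a),G(b)]$ holds precisely when
\[
\tilde g(y)=\frac{1}{(n-1)!}\int_{G(a)}^{y}(y-\tau)^{n-1}\psi(\tau)\,\mathrm{d}\tau+\sum_{k=0}^{n-1}c_k\,(y-G(a))^k
\]
for some $\psi\in L^1(G(a),G(b))$ and constants $c_k$. Substituting $y=G(x)=\log\Psi(x)$ and then changing variables $\tau=G(t)=\log\Psi(t)$ in the integral, so that $\mathrm{d}\tau=\frac{\Psi'(t)}{\Psi(t)}\,\mathrm{d}t$ and $\phi(t):=\psi\big(G(t)\big)\frac{\Psi'(t)}{\Psi(t)}$ again lies in $L^1(a,b)$ (its $L^1$-norm being preserved verbatim by this change of variables), converts the display into
\[
\Psi(x)^s g(x)=\frac{1}{(n-1)!}\int_a^x\Big(\log\tfrac{\Psi(x)}{\Psi(t)}\Big)^{n-1}\phi(t)\,\mathrm{d}t+\sum_{k=0}^{n-1}c_k\Big(\log\tfrac{\Psi(x)}{\Psi(a)}\Big)^k.
\]
Dividing by $\Psi(x)^s$ gives exactly the representation \eqref{form}; and because every step above is a genuine equivalence, the two inclusions (\,``those'' and ``only those''\,) are obtained at one stroke.

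The step I expect to be the real obstacle is the equivalence $\tilde g^{\,(n-1)}\circ G\in AC[a,b]\iff\tilde g^{\,(n-1)}\in AC[G(a),G(b)]$: that composition with a $C^1$ monotone map and its inverse neither creates nor destroys absolute continuity. This is precisely what forces the standing assumptions that $\Psi$ be $C^1$, positive, increasing, with $\Psi'$ nonvanishing on the compact interval $[a,b]$ (guaranteeing that $G$ and $G^{-1}$ are Lipschitz); the remaining ingredients---the iterated chain rule and the classical $AC^n$ representation---are routine.
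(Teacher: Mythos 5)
Your proof is correct, and it takes a recognisably different route from the one in the paper. You transport the whole statement to the classical setting: conjugating by $Q_{G}$ with $G=\log\circ\Psi$, you reduce membership in $AC_{\delta^{\Psi},s}^{n}[a,b]$ to membership of $\tilde g$ in the ordinary space $AC^{n}[G(a),G(b)]$, invoke the standard representation theorem there as a black box, and change variables back; the only non-routine ingredient is the equivalence $\tilde g^{(n-1)}\circ G\in AC[a,b]\iff\tilde g^{(n-1)}\in AC[G(a),G(b)]$, which you correctly identify as the crux and correctly justify via the bi-Lipschitz character of $G$ and $G^{-1}$ (this is where positivity, monotonicity and the nonvanishing of $\Psi'$ on the compact interval are genuinely used). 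The paper instead never leaves the $x$-variable: it applies the primitive characterisation \eqref{primitivecondition} directly to the function $\left(\frac{\Psi}{\Psi'}\frac{\mathrm{d}}{\mathrm{d}x}\right)^{n-1}[\Psi^s g]$ as the definition of the space demands, differentiates once more with respect to $\log\Psi(x)$, and then inverts by applying the $n$-fold integral $\prescript{RL}{a}I^{n}_{\log\Psi(x)}$ with its kernel $\left(\log\frac{\Psi(x)}{\Psi(t)}\right)^{n-1}$, collecting the constants of integration at each stage; the converse is a direct $(n-1)$-fold differentiation. The paper's computation thereby sidesteps the AC-preservation lemma entirely, at the cost of redoing the $n$-fold integration by hand, while your version is more modular, makes the conjugation structure of Proposition \ref{Prop:ourconjug} explicit at the level of function spaces, delivers both inclusions simultaneously as a chain of equivalences, and also handles cleanly the $L^1$ bookkeeping for $\phi(t)=\psi(G(t))\frac{\Psi'(t)}{\Psi(t)}$. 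Your $c_k$ and $\phi$ agree with the paper's after its final renormalisation $c_k\mapsto k!\,c_k$.
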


\begin{proof} 
Firstly, let $g(x) \in AC_{\delta^{\Psi}, s}^{n} [a,b]$. By the definition \eqref{space} of this function space and the equivalence condition \eqref{primitivecondition}, we have
	\begin{equation} \label{equsedinrem}
	\left(\frac{\Psi(x)}{\Psi'(x)}\frac{\mathrm{d}}{\mathrm{d}x}\right)^{n-1} \Big[\Psi(x)^s g(x)\Big] = \int_a^x \phi(t)\,\mathrm{d}t + c_{n - 1},
	\end{equation}
	i.e.
	\[
	\left(\frac{\Psi(x)}{\Psi'(x)}\frac{\mathrm{d}}{\mathrm{d}x}\right)^{n} \Big[\Psi(x)^s g(x)\Big] = \phi(x)\frac{\Psi(x)}{\Psi'(x)},
	\]
We now invert the $\frac{\Psi(x)}{\Psi'(x)}\frac{\mathrm{d}}{\mathrm{d}x}$ operator, which is differentiation with respect to $\log\Psi(x)$. In order to cancel the $n$th power of this operator on the left-hand side, we need to integrate $n$ times with respect to $\log\Psi(x)$. This means applying the operator
\[
\prescript{RL}{a}I^{n}_{\log\Psi(x)}f(x)=\frac{1}{(n-1)!}\int_a^x\left(\log \frac{\Psi(x)}{\Psi(t)}\right)^{n - 1}f(t)\frac{\Psi'(t)}{\Psi(t)}\,\mathrm{d}t,
\]
and taking into account the extra constant term accrued at each stage of the $n$-fold integration. The result is
\begin{align*}
&\Psi(x)^s g(x) \\
&\hspace{0.5cm}=\prescript{RL}{a}I^{n}_{\log\Psi(x)}\left[\phi(x)\frac{\Psi(x)}{\Psi'(x)}\right]+c_{n-1}\prescript{RL}{a}I^{n-1}_{\log\Psi(x)}(1)+c_{n-2}\prescript{RL}{a}I^{n-2}_{\log\Psi(x)}(1)+\dots+c_1\prescript{RL}{a}I^{1}_{\log\Psi(x)}(1)+c_0 \\
&\hspace{0.5cm}=\frac{1}{(n-1)!}\int_a^x\left(\log \frac{\Psi(x)}{\Psi(t)}\right)^{n - 1}\phi(t)\,\mathrm{d}t+\sum_{k=0}^{n-1}\frac{c_k}{k!}\left(\log\frac{\Psi(x)}{\Psi(a)}\right)^k.
\end{align*}
Replacing $c_k$ by $k!c_k$ in the notation, we get Eq. \eqref{form}.

Conversely, let $g(x)$ be represented in the form Eq. \eqref{form}, i.e.
	\begin{equation*}
	\Psi(x)^s g(x) =  \frac{1}{(n - 1)!} \int_{a}^{x} \left(\log \frac{\Psi(x)}{\Psi(t)}\right)^{n - 1} \phi(t)\,\mathrm{d}t + \sum_{k=0}^{n-1} c_k \left(\log \frac{\Psi(x)}{\Psi(a)}\right)^k.
	\end{equation*}
Differentiating $n-1$ times with respect to $\log\Psi(x)$, we get
	\begin{align*}
	\left(\frac{\Psi(x)}{\Psi'(x)}\frac{\mathrm{d}}{\mathrm{d}x}\right)^{n-1}\Big[\Psi(x)^s g(x)\Big] &= \int_{a}^{x}\phi(t) \,\mathrm{d}t + \sum_{k=0}^{n - 1}\frac{k! c_k}{(k - n+1)!} \left(\log \frac{\Psi(x)}{\Psi(a)}\right)^{k - n+1} \\
	&=\int_a^x\phi(t) \,\mathrm{d}t + (n-1)! c_{n-1}.
	\end{align*}
Hence, in accordance with Eq. \eqref{primitivecondition}, we deduce that $g(x) \in AC_{\delta^{\Psi}, s}^{n} [a,b]$, and the proof is complete.
\end{proof}

\begin{rem}
It can be seen from our proof above that $c_{k} = \frac{g_{k}(a)}{k!}$ for all $k = 0, 1, ..., n - 1$, where
\begin{equation}
\label{gk}
g_{k}(x) := \left(\frac{\Psi(x)}{\Psi'(x)}\frac{\mathrm{d}}{\mathrm{d}x}\right)^k\Big[\Psi(x)^s g(x)\Big]=\Psi(x)^s\cdot\prescript{H}{}D^k_{\Psi(x)}g(x),
\end{equation}
and also $\phi(x)=g_{n-1}(x)$. Hence, as a more specific statement of the result of Theorem \ref{ourthm4}, the function $g(x)$ can be represented as
	\begin{equation} \label{anotherform}
	g(x) = \Psi(x)^{-s} \left[ \frac{1}{(n-1)!}\int_{a}^{x} \left(\log \frac{\Psi(x)}{\Psi(t)}\right)^{n - 1} g'_{n-1}(t) \,\mathrm{d}t + \sum_{k=0}^{n - 1}\frac{g_{k}(a)}{k!} \left(\log \frac{\Psi(x)}{\Psi(a)}\right)^{k} \right].
	\end{equation}
	
\end{rem}

Now we prove a result giving sufficient conditions for the existence of the Hadamard-type fractional derivative of a function with respect to another function.

\begin{thm}\label{ourthm5}
	Let $\mu > 0$, $n = \lfloor\mu\rfloor + 1$, $s \in \R$, $\Psi$ an increasing positive function on $[a,b]\subset\mathbb{R}$, and $g\in AC_{\delta^{\Psi}, s}^{n} [a,b]$. Then the Hadamard-type fractional derivative $\prescript{HR}{a}D^{\mu,s}_{\Psi(x)}g(x)$ of $g$ with respect to $\Psi$ exists almost everywhere on $[a, b]$, and it may be represented in the form
	\begin{equation}
	\prescript{HR}{a}D^{\mu,s}_{\Psi(x)}g(x) = \Psi(x)^{-s} \left[  \frac{1}{\Gamma(n-\mu)}\int_{a}^{x} \left(\log \frac{\Psi(x)}{\Psi(t)}\right)^{n - \mu - 1} g'_{n-1}(t)\,\mathrm{d}t + \sum_{k=0}^{n - 1}\frac{g_{k}(a)}{\Gamma(k - \mu+ 1)} \left(\log \frac{\Psi(x)}{\Psi(a)}\right)^{k - \mu}\right],
	\end{equation}
	where the functions $g_k$ are defined by \eqref{gk}.
\end{thm}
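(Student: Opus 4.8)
The plan is to reduce everything to the classical Riemann--Liouville case by means of the conjugation relations, so that both the almost-everywhere existence and the explicit formula are inherited from a known result rather than proved from scratch. First I would invoke Corollary \ref{Cor:ourconjug} to write
\[
\prescript{HR}{a}D^{\mu,s}_{\Psi(x)}g(x)=\Psi(x)^{-s}\,\prescript{R}{a}D^{\mu}_{\log\Psi(x)}\big[\Psi(x)^sg(x)\big],
\]
and then use \eqref{FwrtF:conjug} to recognise $\prescript{R}{a}D^{\mu}_{\log\Psi(x)}$ as an ordinary Riemann--Liouville derivative conjugated by $Q_{\log\Psi(x)}$. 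Concretely, the substitution $y=\log\Psi(x)$ turns differentiation with respect to $\log\Psi$ into ordinary differentiation in $y$ on the interval $[\log\Psi(a),\log\Psi(b)]$, since $\big(\tfrac{\Psi}{\Psi'}\tfrac{\mathrm{d}}{\mathrm{d}x}\big)^{k}=\tfrac{\mathrm{d}^k}{\mathrm{d}y^k}$ under this change of variable.

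Next I would check that the hypothesis $g\in AC_{\delta^{\Psi},s}^{n}[a,b]$ is exactly the statement that $\widetilde{G}(y):=\Psi(x)^sg(x)\big|_{x=\Psi^{-1}(e^y)}$ lies in $AC^n$ of the transformed interval, with the identifications $\widetilde{G}^{(k)}(\log\Psi(a))=g_k(a)$ (by the definition \eqref{gk}) and $\widetilde{G}^{(n)}\in L^1$ corresponding to $g'_{n-1}$. With $\widetilde{G}\in AC^n$, the classical representation of the Riemann--Liouville derivative \cite{Samko} applies, yielding both existence almost everywhere and the closed form
\[
\prescript{R}{\log\Psi(a)}D^{\mu}_{y}\widetilde{G}(y)=\frac{1}{\Gamma(n-\mu)}\int_{\log\Psi(a)}^{y}(y-\tau)^{n-\mu-1}\widetilde{G}^{(n)}(\tau)\,\mathrm{d}\tau+\sum_{k=0}^{n-1}\frac{\widetilde{G}^{(k)}(\log\Psi(a))}{\Gamma(k-\mu+1)}\,(y-\log\Psi(a))^{k-\mu}.
\]

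Finally I would substitute back $y=\log\Psi(x)$, $\tau=\log\Psi(t)$, under which $y-\tau=\log\frac{\Psi(x)}{\Psi(t)}$, $\mathrm{d}\tau=\frac{\Psi'(t)}{\Psi(t)}\,\mathrm{d}t$, and $\widetilde{G}^{(n)}(\tau)\,\mathrm{d}\tau=g'_{n-1}(t)\,\mathrm{d}t$, the Jacobian exactly cancelling the factor $\tfrac{\Psi}{\Psi'}$ relating $\widetilde{G}^{(n)}$ to $g'_{n-1}$; multiplying through by the prefactor $\Psi(x)^{-s}$ then produces precisely the claimed formula. As a cross-check, and an alternative self-contained route, one can instead start from the explicit representation \eqref{anotherform} of $g$ supplied by Theorem \ref{ourthm4} and apply $\prescript{HR}{a}D^{\mu,s}_{\Psi(x)}$ termwise: the polynomial-in-logarithm terms are handled by the power rule of Proposition \ref{Prop:ourlem3} with $\nu=k+1$, while the integral term is an $n$-fold integral $\prescript{H}{a}I^{n,s}_{\Psi(x)}[F]$ to which the semigroup identity of Proposition \ref{Prop:ourlem2} reduces $\prescript{HR}{a}D^{\mu,s}_{\Psi(x)}$ to $\prescript{H}{a}I^{n-\mu,s}_{\Psi(x)}[F]$, evaluated directly.

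The main obstacle is analytic rather than algebraic: because $g$ need only lie in $AC_{\delta^{\Psi},s}^{n}$, the function $g'_{n-1}$ (equivalently $\widetilde{G}^{(n)}$) is merely integrable, so the derivative cannot be expected to exist at every point, and one must argue existence \emph{almost everywhere}. This is precisely where care is needed, and it is supplied by the classical theorem in the conjugation route (or, in the termwise route, by the $L^1$-boundedness of the fractional integral established in Theorem \ref{thm4.1}). A secondary technical point is verifying that $y=\log\Psi(x)$ is a genuine $C^1$ diffeomorphism and that it carries $AC_{\delta^{\Psi},s}^{n}[a,b]$ onto $AC^n[\log\Psi(a),\log\Psi(b)]$ with the derivative identifications above; granting these, the transport of the formula is routine bookkeeping.
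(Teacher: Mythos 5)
Your proposal is correct, and your primary route is genuinely different from the paper's. The paper proves the theorem by the argument you relegate to a cross-check: it substitutes the representation \eqref{anotherform} furnished by Theorem \ref{ourthm4} into $\prescript{HR}{a}D^{\mu,s}_{\Psi(x)}g(x)=\Psi(x)^{-s}D^{n}_{\log\Psi(x)}\prescript{}{a}I^{n-\mu}_{\log\Psi(x)}\big[\Psi(x)^{s}g(x)\big]$, recognises the integral term as an $n$-fold integral with respect to $\log\Psi(x)$ of (essentially) $g'_{n-1}$, and finishes with the semigroup property and the power rule of Proposition \ref{Prop:ourlem3}. Your lead argument instead pushes the whole problem through the diffeomorphism $y=\log\Psi(x)$ and the multiplier $\Psi(x)^{s}$, identifies $AC^{n}_{\delta^{\Psi},s}[a,b]$ with $AC^{n}[\log\Psi(a),\log\Psi(b)]$, and quotes the classical representation theorem for Riemann--Liouville derivatives of $AC^{n}$ functions from \cite{Samko}. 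What the conjugation route buys is a cleaner handling of the almost-everywhere existence claim, which is explicit in the classical theorem but left implicit in the paper's computation; what the paper's route buys is self-containedness, using only results already established earlier in the paper rather than importing the classical theorem (whose own proof is, of course, the same termwise computation in the transformed variable). Your identifications $\widetilde{G}^{(k)}(\log\Psi(a))=g_{k}(a)$ and $\widetilde{G}^{(n)}(\tau)\,\mathrm{d}\tau=g'_{n-1}(t)\,\mathrm{d}t$ are the right ones, and the transport of absolute continuity along the change of variable, valid because $\Psi'$ is continuous and nonvanishing, is indeed the only technical point that needs to be checked.
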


\begin{proof}
We have $g(x) \in AC_{\delta^{\Psi}, s}^{n} [a,b]$, so we can use the representation for $g(x)$ given in Eq. \eqref{anotherform}. Substituting this into the definition of $\prescript{HR}{a}D^{\mu,s}_{\Psi(x)}g(x)$, we get
\begin{align*}
\prescript{HR}{a}D^{\mu,s}_{\Psi(x)}g(x) &= \Psi(x)^{-s} D^n_{\log\Psi(x)}\prescript{}{a}I^{n-\mu}_{\log\Psi(x)}\Big[\Psi(x)^sg(x)\Big] \\
&\hspace{-1cm}=\Psi(x)^{-s} D^n_{\log\Psi(x)}\prescript{}{a}I^{n-\mu}_{\log\Psi(x)}\left[ \frac{1}{(n-1)!}\int_{a}^{x} \left(\log \frac{\Psi(x)}{\Psi(t)}\right)^{n - 1} g'_{n-1}(t) \,\mathrm{d}t + \sum_{k=0}^{n - 1}\frac{g_{k}(a)}{k!} \left(\log \frac{\Psi(x)}{\Psi(a)}\right)^{k} \right] \\
&\hspace{-1cm}=\Psi(x)^{-s} D^n_{\log\Psi(x)}\prescript{}{a}I^{n-\mu}_{\log\Psi(x)}\left[ \prescript{}{a}I^{n}_{\log\Psi(x)}g'_{n-1}(x)+ \sum_{k=0}^{n - 1}\frac{g_{k}(a)}{k!} \left(\log \frac{\Psi(x)}{\Psi(a)}\right)^{k} \right] \\
&\hspace{-1cm}=\Psi(x)^{-s} \prescript{}{a}I^{n-\mu}_{\log\Psi(x)}g'_{n-1}(x)+ \sum_{k=0}^{n - 1}\frac{g_{k}(a)}{k!}\cdot\frac{k!}{\Gamma(k-\mu+1)} \left(\log \frac{\Psi(x)}{\Psi(a)}\right)^{k-\mu},
\end{align*}
where we have used the results of both Corollary \ref{Cor:ourconjug} and Proposition \ref{Prop:ourlem3} respectively. This is the required result.
\end{proof}

\begin{rem}
	The result of Theorem \ref{ourthm5} is analogous to the results for the classical Riemann--Liouville fractional differential operator which may be found in \cite{Samko}. Indeed, the proof also follows the same structure, differentiating and integrating with respect to $\log\Psi(x)$ instead of with respect to $x$, and with the extra $\Psi(x)^s$ multiplier.
	
	In the case that $ \Psi(x)=x $, we have an analogous conclusion for the original Hadamard-type fractional differential operator -- and, in the case $ \Psi(x)=x $ and $ s=0 $, for the Hadamard fractional differential operator. See also \cite{3} for these results.
	
	In the case that $\Psi(x)=e^x$, we have an analogous conclusion for the tempered fractional differential operator.
\end{rem}

\section{Integration by parts in Hadamard-type fractional calculus with respect to functions} \label{Sec:intparts}

One of the most important techniques of classical calculus is integration by parts. Versions of this in fractional calculus have been a subject of study for a long time \cite{love-young}, and in this
section we derive some integration by parts formulae in the setting of Hadamard-type fractional calculus with respect to functions.

First, it is necessary to define the right-sided versions of these new fractional operators. This is done in a natural way following Definition \ref{defn:3}, as follows.

    \begin{defn}\label{Def:rightsided}
     Let $\mu\in\mathbb{R}$ with $\mu>0$ (or $\mu\in\mathbb{C}$ with $\mathrm{Re}(\mu)>0$), and $s\in\mathbb{C}$. Let $f$ be an integrable function defined on $[a,b]$ where $a<b$ in $\mathbb{R}$, and $\Psi\in C^1([a,b])$ be a positive increasing function such that $ \Psi'(x) \neq 0 $ for all $ x \in [a,b] $. Then, the right-sided Hadamard-type fractional integral of $f$ with respect to $\Psi$, or the right-sided tempered fractional integral of $f$ with respect to $\log\circ\Psi$, with order $\mu$ and parameter $s$, is defined as
	\begin{equation}
	\prescript{H}{\Psi(x)}I^{\mu,s}_{b} f(x) = \prescript{T}{\log\Psi(x)}I^{\mu,s}_{b} f(x) = \frac{1}{\Gamma(\mu)} \int_{x}^{b} \left(\frac{\Psi(x)}{\Psi(t)}\right)^{s} \left(\log \frac{\Psi(t)}{\Psi(x)}\right)^ {\mu - 1} f(t) \frac{\Psi '(t)}{\Psi(t)}\,\mathrm{d}t,\quad\quad x\in[a,b].
	\end{equation}

	Writing $n=\lfloor\mu\rfloor+1$ (or $n=\lfloor\mathrm{Re}(\mu)\rfloor+1$) so that $ n-1 \leq \mu < n\in\mathbb{Z}^+$, the right-sided Hadamard-type fractional derivative of $f$ with respect to $\Psi$, or the right-sided tempered fractional derivative of $f$ with respect to $\log\circ\Psi$, with order $\mu$ and parameter $s$, is defined as (in the Riemann--Liouville sense)
	\begin{equation}
	\prescript{HR}{\Psi(x)}D^{\mu,s}_{b} f(x) = \prescript{TR}{\log\Psi(x)}D^{\mu,s}_{b} f(x) = \prescript{H}{\Psi(x)}D^{n,s}_{b} \prescript{H}{\Psi(x)}I^{n - \mu,s}_{b} f(x), \quad\quad x\in[a,b],
	\end{equation}
or (in the Caputo sense)
	\begin{equation}
	\prescript{HC}{\Psi(x)}D^{\mu,s}_{b} f(x) = \prescript{TC}{\log\Psi(x)}D^{\mu,s}_{b} f(x) = \prescript{H}{\Psi(x)}I^{n - \mu,s}_{b} \prescript{H}{\Psi(x)}D^{n,s}_{b} f(x), \quad\quad x\in[a,b],
	\end{equation}
where the $n$th-order derivative is defined by
	\begin{equation}
	\prescript{H}{\Psi(x)}D^{n,s}_{b}f(x) = (-1)^n \Psi(x)^{s} \left(\frac{\Psi(x)}{\Psi'(x)}\cdot\frac{\mathrm{d}}{\mathrm{d}x}\right)^{n} \Big[\Psi(x)^{-s}f(x)\Big]. 
	\end{equation}
\end{defn}

\begin{lem}\label{lemmaintbyparts} Let $ \mu>0 $, $n=\lfloor\mu\rfloor+1$, $s \in \R$, $\Psi$ an increasing positive function on $[a,b]\subset\mathbb{R}$, and $ p \geq 1 $, $ q \geq 1 $ with $ \frac{1}{p} + \frac{1}{q} \leq 1+\mu $  ( assuming $p\neq1$ and $q\neq1$ in the case when $ \frac{1}{p} + \frac{1}{q}=1+\mu $). If $ f\in X^{p}_{\Psi,c}(a,b) $ and $ g\in X^{q}_{\Psi,c}(a,b) $, then
	\begin{equation*}
	\int_{a}^{b} \frac{\Psi'(x)}{\Psi(x)}f(x) \prescript{H}{a}I^{\mu,s}_{\Psi(x)}g(x)\,\mathrm{d}x = \int_{a}^{b} \frac{\Psi'(x)}{\Psi(x)}g(x) 	\prescript{H}{\Psi(x)}I^{\mu,s}_{b}f(x)\,\mathrm{d}x.
	\end{equation*}
\end{lem}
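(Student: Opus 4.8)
The plan is to reduce the identity to the classical Riemann--Liouville integration by parts formula by means of the substitution $u=\log\Psi(x)$, which converts the generalised operators first into ordinary tempered operators and then, after one further exponential conjugation, into plain Riemann--Liouville operators. The observation that makes this clean is that both integrals are taken against the measure $\frac{\Psi'(x)}{\Psi(x)}\,\mathrm{d}x=\mathrm{d}\big(\log\Psi(x)\big)$, so under $u=\log\Psi(x)$ each side becomes an ordinary Lebesgue integral over $[\alpha,\beta]$, where $\alpha=\log\Psi(a)$ and $\beta=\log\Psi(b)$.

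Concretely, I would set $\tilde f(u)=f\big(\Psi^{-1}(e^u)\big)$ and $\tilde g(u)=g\big(\Psi^{-1}(e^u)\big)$. By the very definitions (Definition \ref{defn:3} and Definition \ref{Def:rightsided}) the operators on the two sides are tempered operators with respect to $\log\Psi$, which the substitution turns into the ordinary left- and right-sided tempered integrals $\prescript{T}{\alpha}I^{\mu,s}_u\tilde g(u)$ and $\prescript{T}{u}I^{\mu,s}_\beta\tilde f(u)$ in the new variable. Writing these tempered integrals as conjugations of Riemann--Liouville integrals by multiplication with $e^{\pm su}$ (analogously to Definition \ref{Def:tempered} and Corollary \ref{Cor:ourconjug}), the left-hand side becomes
\[
\int_\alpha^\beta F(u)\,\prescript{}{\alpha}I^\mu_u G(u)\,\mathrm{d}u,\qquad F(u):=e^{-su}\tilde f(u),\quad G(u):=e^{su}\tilde g(u).
\]
Applying the classical Riemann--Liouville integration by parts formula \cite{Samko} turns this into $\int_\alpha^\beta G(u)\,\prescript{}{u}I^\mu_\beta F(u)\,\mathrm{d}u$, and reversing the manipulations — reabsorbing the exponentials into the right-sided tempered integral of $\tilde f$ and undoing the substitution — yields exactly the right-hand side. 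The symmetry is thus essentially free once the reduction is in place: it is the classical formula dressed up by a change of variable.

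The main obstacle is not the algebra but justifying that the classical formula applies, and this is precisely where the hypotheses $f\in X^p_{\Psi,c}$, $g\in X^q_{\Psi,c}$ and $\frac1p+\frac1q\le 1+\mu$ enter. I would check that membership in the weighted spaces transfers to the required Lebesgue integrability: $f\in X^p_{\Psi,c}$ says exactly that $e^{cu}\tilde f(u)\in L^p[\alpha,\beta]$, and since $F(u)=e^{-(s+c)u}\big(e^{cu}\tilde f(u)\big)$ with $e^{-(s+c)u}$ bounded on the compact interval $[\alpha,\beta]$, it follows that $F\in L^p[\alpha,\beta]$; similarly $G\in L^q[\alpha,\beta]$. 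The condition $\frac1p+\frac1q\le 1+\mu$ (with the strictness caveat on $p,q$ at equality) is exactly the hypothesis of the classical theorem, which is what guarantees the absolute convergence of the associated double integral and hence the legitimacy of the Fubini interchange hidden inside it. As an alternative to invoking the classical result, one could prove the identity directly: substitute the integral definitions of both operators, observe that the left-hand side is a double integral over $\{a\le t\le x\le b\}$ while the right-hand side is the same integrand over $\{a\le x\le t\le b\}$ after relabelling $x\leftrightarrow t$, and then interchange the order of integration; in that route the same $p,q$ condition is exactly what one must invoke, via H\"older's inequality, to establish absolute convergence before applying Fubini.
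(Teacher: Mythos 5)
Your argument is correct, but your primary route differs from the paper's. The paper proves the lemma exactly by what you describe as your ``alternative'': it substitutes the integral definition of $\prescript{H}{a}I^{\mu,s}_{\Psi(x)}$, applies Fubini to the double integral over $\{a\le t\le x\le b\}$, and reads off the right-sided operator from the inner integral --- three lines, with the integrability needed for Fubini left implicit in the hypotheses on $p,q$. Your main route instead transports everything to the variable $u=\log\Psi(x)$, strips off the exponential conjugation so that both operators become plain Riemann--Liouville integrals of $F=e^{-su}\tilde f$ and $G=e^{su}\tilde g$ on $[\log\Psi(a),\log\Psi(b)]$, and then quotes the classical fractional integration by parts formula from \cite{Samko}. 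Both are valid; the trade-off is that the paper's direct computation is shorter and self-contained but silently relies on the reader to supply the absolute-convergence argument, whereas your reduction makes the role of the hypotheses transparent --- the verification that $F\in L^p$ and $G\in L^q$ (using positivity of $\Psi(a)$ so that the exponential weights are bounded on the compact interval) together with the condition $\frac1p+\frac1q\le 1+\mu$ is precisely the hypothesis set of the classical theorem, so the convergence justification is inherited rather than redone. Your approach also explains \emph{why} the exponent condition appears in the statement, which the paper's proof does not.
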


\begin{proof}
Using the definitions of the Hadamard-type fractional integrals with respect to $\Psi(x)$, and using Fubini's theorem for swapping integrals, we have
	\begin{align*}
	\int_{a}^{b} \frac{\Psi'(x)}{\Psi(x)}f(x) \prescript{H}{a}I^{\mu,s}_{\Psi(x)}g(x)\,\mathrm{d}x &= \frac{1}{\Gamma(\mu)}\int_{a}^{b} \frac{\Psi^{\prime}(x)}{\Psi(x)}f(x) \int_{a}^x \left(\frac{\Psi(t)}{\Psi(x)}\right)^s \left(\log \frac{\Psi(x)}{\Psi(t)}\right)^ {\mu - 1} g(t) \frac{\Psi '(t)}{\Psi(t)}\,\mathrm{d}t\,\mathrm{d}x \\
	&=\frac{1}{\Gamma(\mu)}\int_{a}^{b} \frac{\Psi^{\prime}(t)}{\Psi(t)}g(t) \int_{t}^b \left(\frac{\Psi(t)}{\Psi(x)}\right)^s \left(\log \frac{\Psi(x)}{\Psi(t)}\right)^ {\mu - 1} f(x) \frac{\Psi '(x)}{\Psi(x)}\,\mathrm{d}x\,\mathrm{d}t \\
	&= \int_{a}^{b} \frac{\Psi'(t)}{\Psi(t)}g(t) \Big[\prescript{H}{\Psi(t)}I^{\mu,s}_{b}f(t)\Big]\,\mathrm{d}t.
	\end{align*}
	Hence, the result is proved.
\end{proof}

\begin{thm}
Let $\mu > 0$, $n = \lfloor\mu\rfloor + 1$, $s \in \R$, $\Psi$ an increasing positive function on $[a,b]\subset\mathbb{R}$, and $1\leq p\leq\infty$. If $ f\in AC^{n}_{\delta^{\Psi},s} [a,b] $ and $ g\in X^{p}_{\Psi,c}(a,b) $, then we have the following integration by parts relations for Hadamard-type fractional integrals with respect to $\Psi(x)$:
	\begin{multline*}
	\int_{a}^{b} f(x) \prescript{HR}{a}D^{\mu,s}_{\Psi(x)} g(x)\,\mathrm{d}x = \int_{a}^{b} \frac{\Psi'(x)}{\Psi(x)} g(x)\prescript{HC}{\Psi(x)}D^{\mu,s}_{b}\left(\frac{\Psi(x)}{\Psi'(x)} f(x)\right)\,\mathrm{d}x \\ + \sum_{k=0}^{n-1} \left[\prescript{H}{\Psi(x)}D^{k,s}_{b} \left( \frac{\Psi(x)}{\Psi'(x)} f(x) \right) \prescript{H}{a}I^{k-\mu+1,s}_{\Psi(x)} g(x) \right]^{b}_{a},
	\end{multline*}
and
	\begin{multline*}
	\int_{a}^{b} f(x) \prescript{H}{\Psi(x)}D^{\mu,s}_{b} g(x)\,\mathrm{d}x = \int_{a}^{b} \frac{\Psi'(x)}{\Psi(x)} g(x)\prescript{HC}{a}D^{\mu,s}_{\Psi(x)}\left(\frac{\Psi(x)}{\Psi'(x)} f(x) \right)\,\mathrm{d}x \\ - \sum_{k=0}^{n-1}\left[\prescript{H}{a}D^{k,s}_{\Psi(x)} \left( \frac{\Psi(x)}{\Psi'(x)} f(x) \right) \prescript{H}{\Psi(x)}I^{k-\mu+1,s}_{b} g(x) \right]^{b}_{a}.
	\end{multline*}
\end{thm}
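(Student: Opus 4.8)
The plan is to reduce the whole identity to an iterated ordinary integration by parts for the first-order operator $\delta^{\Psi}:=\frac{\Psi(x)}{\Psi'(x)}\frac{\mathrm{d}}{\mathrm{d}x}$ (which is just differentiation with respect to $\log\Psi(x)$), followed by a single use of the fractional-integral duality already proved in Lemma~\ref{lemmaintbyparts}. First I would unfold the left-sided Riemann--Liouville-type derivative by its definition, writing $\prescript{HR}{a}D^{\mu,s}_{\Psi(x)}g=\prescript{H}{a}D^{n,s}_{\Psi(x)}h$ where $h:=\prescript{H}{a}I^{n-\mu,s}_{\Psi(x)}g$ and $\prescript{H}{a}D^{n,s}_{\Psi(x)}h=\Psi^{-s}(\delta^{\Psi})^{n}[\Psi^{s}h]$. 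The device that makes everything fit is to pass to the weighted measure $\mathrm{d}\lambda:=\frac{\Psi'(x)}{\Psi(x)}\,\mathrm{d}x$ by absorbing one Jacobian factor into $f$: with $\tilde f:=\frac{\Psi}{\Psi'}f$, $P:=\Psi^{-s}\tilde f$ and $Q:=\Psi^{s}h$, a direct check gives $\int_{a}^{b}f\,\prescript{HR}{a}D^{\mu,s}_{\Psi(x)}g\,\mathrm{d}x=\int_{a}^{b}P\,(\delta^{\Psi})^{n}[Q]\,\mathrm{d}\lambda$. This is exactly where the factor $\frac{\Psi}{\Psi'}f$ of the statement originates.

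The central step is the integration by parts for $\delta^{\Psi}$. Because $\delta^{\Psi}$ is genuine differentiation in the variable $u=\log\Psi(x)$ and $\mathrm{d}\lambda=\mathrm{d}u$, the one-step identity $\int_{a}^{b}P\,\delta^{\Psi}[Q]\,\mathrm{d}\lambda=[PQ]_{a}^{b}-\int_{a}^{b}\delta^{\Psi}[P]\,Q\,\mathrm{d}\lambda$ holds, and iterating it $n$ times yields $\int_{a}^{b}P\,(\delta^{\Psi})^{n}[Q]\,\mathrm{d}\lambda=(-1)^{n}\int_{a}^{b}(\delta^{\Psi})^{n}[P]\,Q\,\mathrm{d}\lambda+\sum_{j=0}^{n-1}(-1)^{j}\big[(\delta^{\Psi})^{j}[P]\,(\delta^{\Psi})^{n-1-j}[Q]\big]_{a}^{b}$. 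I would then translate each factor back into named operators using the two elementary identities $(\delta^{\Psi})^{k}[\Psi^{-s}w]=(-1)^{k}\Psi^{-s}\,\prescript{H}{\Psi(x)}D^{k,s}_{b}w$ and $(\delta^{\Psi})^{k}[\Psi^{s}v]=\Psi^{s}\,\prescript{H}{a}D^{k,s}_{\Psi(x)}v$, which follow straight from Definitions~\ref{defn:3} and~\ref{Def:rightsided}. The two sign factors $(-1)^{j}$ in each boundary term then cancel, collapsing the sum to $\sum_{k=0}^{n-1}\big[\prescript{H}{\Psi(x)}D^{k,s}_{b}\tilde f\cdot\prescript{H}{a}D^{n-1-k,s}_{\Psi(x)}h\big]_{a}^{b}$, while the leading term becomes $\int_{a}^{b}\frac{\Psi'}{\Psi}\,\big(\prescript{H}{\Psi(x)}D^{n,s}_{b}\tilde f\big)\,h\,\mathrm{d}x$.

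To conclude I would apply Lemma~\ref{lemmaintbyparts} to the leading term, moving the order-$(n-\mu)$ integral hidden in $h=\prescript{H}{a}I^{n-\mu,s}_{\Psi(x)}g$ onto the other factor, which turns it into $\int_{a}^{b}\frac{\Psi'}{\Psi}\,g\,\big(\prescript{H}{\Psi(x)}I^{n-\mu,s}_{b}\,\prescript{H}{\Psi(x)}D^{n,s}_{b}\tilde f\big)\,\mathrm{d}x$; the inner composition is precisely the right-sided Caputo operator $\prescript{HC}{\Psi(x)}D^{\mu,s}_{b}$ of Definition~\ref{Def:rightsided}, giving the stated main integral $\int_{a}^{b}\frac{\Psi'}{\Psi}\,g\,\prescript{HC}{\Psi(x)}D^{\mu,s}_{b}\!\big(\frac{\Psi}{\Psi'}f\big)\,\mathrm{d}x$. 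For the boundary sum I would insert $h=\prescript{H}{a}I^{n-\mu,s}_{\Psi(x)}g$ and simplify $\prescript{H}{a}D^{n-1-k,s}_{\Psi(x)}\prescript{H}{a}I^{n-\mu,s}_{\Psi(x)}g=\prescript{H}{a}I^{k-\mu+1,s}_{\Psi(x)}g$ via the semigroup property of Proposition~\ref{Prop:ourlem2}, recovering the boundary terms exactly as written. The second identity is proved by the mirror-image computation, starting from $\prescript{H}{\Psi(x)}D^{\mu,s}_{b}g=\prescript{H}{\Psi(x)}D^{n,s}_{b}\prescript{H}{\Psi(x)}I^{n-\mu,s}_{b}g$ and integrating by parts in the reversed orientation; the extra factor $(-1)^{n}$ built into the right-sided $n$th derivative is what produces the overall minus sign on its boundary sum.

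The genuine difficulty is analytic rather than algebraic. I expect the main obstacle to be justifying the manipulations under the stated hypotheses: one must check that $f\in AC^{n}_{\delta^{\Psi},s}[a,b]$ makes each iterate $(\delta^{\Psi})^{j}[P]$ absolutely continuous, so that the one-step integration by parts and the boundary evaluations are all legitimate, and that $g\in X^{p}_{\Psi,c}(a,b)$ together with the order $n-\mu\in(0,1]$ of the surviving integral places the factor $\prescript{H}{\Psi(x)}D^{n,s}_{b}\tilde f$ in a space for which the Fubini swap behind Lemma~\ref{lemmaintbyparts} is valid (equivalently, that the conjugate-exponent condition needed there is met). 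Verifying these integrability and boundary-regularity points, and confirming that the limits defining the boundary evaluations actually exist, is the part that will require care, rather than the bookkeeping of the powers of $\Psi$ and the signs.
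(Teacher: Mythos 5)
Your proposal is correct and follows essentially the same route as the paper's proof: unfold $\prescript{HR}{a}D^{\mu,s}_{\Psi(x)}g$ via its definition, integrate by parts $n$ times against the first-order operator $\frac{\Psi(x)}{\Psi'(x)}\frac{\mathrm{d}}{\mathrm{d}x}$ to peel off boundary terms, apply Lemma \ref{lemmaintbyparts} to the surviving integral, identify the right-sided Caputo operator, and simplify the boundary terms with the semigroup property of Proposition \ref{Prop:ourlem2}. The only differences are presentational (you package the $n$-fold iteration into one closed formula with the weighted measure $\frac{\Psi'(x)}{\Psi(x)}\,\mathrm{d}x$, where the paper iterates step by step), plus your welcome attention to the regularity and integrability justifications that the paper leaves implicit.
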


\begin{proof}
We prove only the first of the two stated identities, since the proofs of both are nearly identical to each other.

Using the definition of $ \prescript{HR}{a}D^{\mu,s}_{\Psi(x)}$, we have
	\begin{align*}
	\int_{a}^{b} f(x) \prescript{HR}{a}D^{\mu,s}_{\Psi(x)} g(x) \,\mathrm{d}x &= \int_{a}^{b} f(x) \prescript{H}{a}D^{n,s}_{\Psi(x)} \prescript{HR}{a}D^{n-\mu,s}_{\Psi(x)} g(x) \,\mathrm{d}x \\
	&= \int_{a}^{b} f(x) \Psi(x)^{-s} \frac{\Psi(x)}{\Psi'(x)} \frac{\mathrm{d}}{\mathrm{d}x} \left[ \left( \frac{\Psi(x)}{\Psi'(x)} \frac{\mathrm{d}}{\mathrm{d}x} \right)^{n-1} \Psi(x)^s \prescript{H}{a}I^{n-\mu,s}_{\Psi(x)} g(x)\right] \mathrm{d}x.
	\end{align*}
	Using integration by parts, we find
	\begin{align*}
	\int_{a}^{b} f(x) \prescript{HR}{a}D^{\mu,s}_{\Psi(x)} g(x) \,\mathrm{d}x &= \left[f(x) \Psi(x)^{-s} \frac{\Psi(x)}{\Psi'(x)} \left( \frac{\Psi(x)}{\Psi'(x)} \frac{\mathrm{d}}{\mathrm{d}x} \right)^{n-1}	 \Psi(x)^s \prescript{H}{a}I^{n-\mu,s}_{\Psi(x)} g(x)\right]^{b}_{a} \\
	& \hspace{0.5cm}- \int_{a}^{b} \left[ \left( \frac{\Psi(x)}{\Psi'(x)} \frac{\mathrm{d}}{\mathrm{d}x} \right)^{n-1} \Psi(x)^s \prescript{H}{a}I^{n-\mu,s}_{\Psi(x)} g(x) \right]\frac{\mathrm{d}}{\mathrm{d}x} \left( f(x) \Psi(x)^{-s} \frac{\Psi(x)}{\Psi'(x)}\right) \mathrm{d}x \\
	&= \left[ \frac{\Psi(x)}{\Psi'(x)} f(x) \prescript{H}{a}D^{n-1,s}_{\Psi(x)} \prescript{H}{a}I^{n-\mu,s}_{\Psi(x)} g(x) \right]^{b}_{a} \\
	& \hspace{1.5cm}+ \int_{a}^{b} \frac{\Psi'(x)}{\Psi(x)} \prescript{H}{a}D^{n-1,s}_{\Psi(x)} \prescript{H}{a}I^{n-\mu,s}_{\Psi(x)} g(x) \prescript{H}{\Psi(x)}D^{1,s}_{b}\left[ \frac{\Psi(x)}{\Psi'(x)} f(x) \right] \mathrm{d}x.
	\end{align*}
	Again applying integration by parts:
	\begin{multline*}
	\int_{a}^{b} f(x) \prescript{HR}{a}D^{\mu,s}_{\Psi(x)} g(x) \,\mathrm{d}x = \left[ \frac{\Psi(x)}{\Psi'(x)} f(x) \prescript{H}{a}D^{n-1,s}_{\Psi(x)} \prescript{H}{a}I^{n-\mu,s}_{\Psi(x)} g(x) \right]^{b}_{a}  \\ + \left[\prescript{H}{\Psi(x)}D^{1,s}_{b}\left(\frac{\Psi(x)}{\Psi'(x)} f(x)\right) \prescript{H}{a}D^{n-2,s}_{\Psi(x)} \prescript{H}{a}I^{n-\mu,s}_{\Psi(x)} g(x) \right]^{b}_{a} \\ + \int_{a}^{b} \frac{\Psi^{\prime}(x)}{\Psi(x)} \prescript{H}{a}D^{n-2,s}_{\Psi(x)} \prescript{H}{a}I^{n-\mu,s}_{\Psi(x)} g(x) \prescript{H}{\Psi(x)}D^{2,s}_{b}\left( \frac{\Psi(x)}{\Psi'(x)} f(x)  \right) \,\mathrm{d}x.
	\end{multline*}
	Continuing in this manner, applying integration by parts $n$ times, we get in the end
	\begin{multline*}
	\int_{a}^{b} f(x) \prescript{HR}{a}D^{\mu,s}_{\Psi(x)} g(x) \,\mathrm{d}x = \sum_{k=0}^{n-1} \left[\prescript{H}{\Psi(x)}D^{k,s}_{b} \left[ \frac{\Psi(x)}{\Psi'(x)} f(x) \right] \prescript{H}{a}D^{n-k-1,s}_{\Psi(x)} \prescript{H}{a}I^{n-\mu,s}_{\Psi(x)} g(x) \right]^{b}_{a} \\ 
	+ \int_{a}^{b} \frac{\Psi'(x)}{\Psi(x)} \prescript{H}{a}I^{n-\mu,s}_{\Psi(x)} g(x) \prescript{H}{\Psi(x)}D^{n,s}_{b} \left[ \frac{\Psi(x)}{\Psi'(x)} f(x) \right] \,\mathrm{d}x.
	\end{multline*}
	Using Lemma \ref{lemmaintbyparts} on the integral from the right-hand side, and recalling Proposition \ref{Prop:ourlem2}, we have
	\begin{multline*}
	\int_{a}^{b} f(x) \prescript{HR}{a}D^{\mu,s}_{\Psi(x)} g(x) \,\mathrm{d}x = \sum_{k=0}^{n-1} \left[\prescript{H}{\Psi(x)}D^{k,s}_{b} \left[ \frac{\Psi(x)}{\Psi'(x)} f(x) \right] \prescript{H}{a}I^{k-\mu+1,s}_{\Psi(x)} g(x) \right]^{b}_{a} \\ + \int_{a}^{b} \frac{\Psi'(x)}{\Psi(x)} g(x) \prescript{H}{\Psi(x)}I^{n-\mu,s}_{b} \prescript{H}{\Psi(x)}D^{n,s}_{b} \left[ \frac{\Psi(x)}{\Psi'(x)} f(x)  \right] \,\mathrm{d}x.
	\end{multline*}
	Finally, by using the definition of $\prescript{HC}{\Psi(x)}D^{\mu,s}_{b}$, we get the required result.
\end{proof}

\section{Conclusion} \label{Sec:concl}

We have demonstrated in this paper that Hadamard-type fractional calculus with respect to the independent variable $x$ is precisely the same as tempered fractional calculus with respect to the logarithm function $\log x$. This new connection will be valuable in the study of both Hadamard-type and tempered fractional calculus, as any results known in one model can now easily be transferred to the other model.

In light of this new connection, it is natural to introduce a more general operator which covers both tempered and Hadamard-type as special cases, namely by applying these operators with respect to arbitrary monotonic functions. The new operator can generalise many existing fractional operators, including Riemann--Liouville, Caputo, classical Hadamard, Hadamard-type, and tempered fractional calculus, plus all of these with respect to arbitrary monotonic functions.

In this paper, we have established several important properties for the Hadamard-type and tempered operators with respect to functions. Semigroup and reciprocal properties have been proved, demonstrating that these operators work together with each other in a natural way. Function spaces have been defined in which the Hadamard-type fractional integral with respect to $\Psi$ is a bounded operator, and sufficient conditions for the existence of the Hadamard-type fractional derivative with respect to $\Psi$ have been established. We have also derived fractional integration by parts formulae in the settings of these operators.

In future work, we plan to introduce an integral transform which will help to solve fractional differential equations in the setting of the Hadamard-type fractional operators with respect to functions. It will also be possible to extend various results in the literature on Hadamard-type fractional calculus, including very recent work such as \cite{abdalla-salem-cichon}, into the new generalised context of Hadamard-type operators with respect to functions.

\section*{Acknowledgements}

The authors would like to thank the editor and the anonymous reviewers for their helpful suggestions. In particular, we are grateful to one of the reviewers for suggesting to us the result and proof that became our Theorem \ref{Thm:reviewer}.

\end{document}